\numberwithin{equation}{section} 
\newcommand{\dist}{\mathrm{dist}}
\newcommand{\C}{\mathcal{C}}
\newcommand{\1}{\mathbf{1}}
\newcommand{\E}{\mathbb{E}}
\newcommand{\emp}{\widehat{\mu}}
\newcommand{\En}{\mathscr{E}}
\newcommand{\F}{\mathscr{F}}
\newcommand{\Ha}{\mathscr{H}}
\newcommand{\Hi}{\mathrm{H}}
\renewcommand{\d}{\mathrm{d}}
\newcommand{\g}{\mathrm{g}}
\newcommand{\M}{\mathscr{M}}
\newcommand{\N}{\mathds{N}}
\renewcommand{\O}{\mathcal{O}}
\renewcommand{\P}{\mathbb{P}}
\newcommand{\R}{\mathds{R}}
\renewcommand{\S}{\mathscr{S}}
\newcommand{\Z}{\mathrm{Z}}
\newcommand{\U}{\mathscr{U}}
\newcommand{\W}{\mathscr{W}}
\newcommand{\x}{\mathbf{x}}
\newcommand{\s}{\mathrm{s}}
\renewcommand{\u}{\mathbf{u}}
\newcommand{\T}{\mathds{T}}
\newcommand{\X}{\mathfrak{X}}
\renewcommand\o{
  \mathchoice
    {{\scriptstyle\mathcal{O}}}
    {{\scriptstyle\mathcal{O}}}
    {{\scriptscriptstyle\mathcal{O}}}
    {\scalebox{.7}{$\scriptscriptstyle\mathcal{O}$}}
  }
\newtheorem{theorem}{Theorem}[section]
\newtheorem{definition}[theorem]{Definition}
\newtheorem{proposition}[theorem]{Proposition}
\newtheorem{corollary}[theorem]{Corollary}
\newtheorem{lemma}[theorem]{Lemma}
\newtheorem{assumption}[theorem]{Assumptions}
\theoremstyle{definition} \newtheorem{remark}{Remark}[section]
\title{Poisson statistics for Gibbs measures at high temperature}
\date{December 19, 2019}
\author{Gaultier Lambert \footnote{
University of Zurich, Winterthurerstrasse 190, 8057 Z\"urich, Switzerland. 
\newline Email: \href{mailto:gaultier.lambert@math.uzh.ch}{\nolinkurl{gaultier.lambert@math.uzh.ch}}}}
\begin{document}

\maketitle

\begin{abstract}\noindent\normalsize
 We consider a gas of $N$ particles with a general two--body interaction and confined by an external potential in the  mean field or high temperature regime, that is when the inverse temperature $\beta>0$ satisfies $\beta N \to \gamma \ge 0$ as $N\to+\infty$. 
 We show that under general conditions on the interaction and the potential, the local fluctuations are described by a Poisson point process in the large $N$ limit. 
 We present applications to Coulomb and Riesz gases on $\R^n$ for any $n\ge 1$, as well as to the edge behavior of $\beta$--ensembles on $\R$. 
\end{abstract}



 \section{Introduction}

The present article is inspired by the works of \cite{BP15} and \cite{NT} which establish that the local statistics of a $\beta$--ensembles on $\R$ are Poisson in the regime as $\beta N \to \gamma$ for a fixed $\gamma>0$. 
Recall that a $\beta$--ensembles is a collection of particles with random positions in $\R^N$ having a joint distribution
\begin{equation} \label{GbE}
\P_N  = \frac{ e^{- \Ha_N(\x)}}{\Z_N^{(V)}} \d \x ,  \qquad  \Ha_N(\x)=  \beta \sum_{1\le i <j \le N} \log|x_i-x_j|^{-1} +  \sum_{1\le j \le N} V(x_j)  , 
\end{equation}
where  $\d \x = \d x_1 \cdots \d x_N$ and $\Z_N^{(V)}>0$ is a normalization constant. 
$ \Ha_N(\x)$ corresponds to the energy of a configuration $\x\in \R^N$ and it is made of a two--body interaction with kernel $\g(x,z) = \log|x-z|^{-1}$ and a one--body potential~$V(x)$. 
The specificity of this interaction lies in the singularity of its kernel on the diagonal which forces the particles to repel each other.
The parameter $\beta>0$ can be thought of both as a \emph{coupling constant} which represents the strength of the interaction and as an \emph{inverse temperature} if one views $\P_N$ has a Gibbs measure (as $\beta$ increases, the measure $\P_N$ concentrates on the low energy configurations). 
In the Gaussian case, $V(x) = \alpha x^2$, $\P_N$  describes the joint law of the eigenvalues of a symmetric tri--diagonal random matrix known as the Dumitriu--Edelman ensemble \cite{DE02}. 
In \cite{BP15},  Benaych-Georges and P{\'e}ch{\'e} used the Dumitriu--Edelman representation to show that the local fluctuations inside of the bulk of the eigenvalues' process are described by a homogeneous Poisson point process on $\R$ as $N\to+\infty$ with  $\beta N \to \gamma \ge 0$. 
This result has been recently generalized by  Nakano and Trinh \cite{NT}  to any potential $V\in C(\R)$ with sufficient growth, hence showing universality. 
In this article, we tackle an analogous problem for a general (singular) interaction $\g$ on a $n$--dimensional manifold $\X$. 
In particular, our results apply to any Riesz gas on $\R^n$ for any $n\ge1$ with a general potential (Corollary~\ref{cor:RBF}) and to Coulomb gases on compact manifolds of dimension $n\ge 2$. We also discuss the emergence of Poisson statistics at the \emph{boundary} of the gas. 
This allows us to show that the largest eigenvalue of the Dumitriu--Edelman ensemble properly rescaled converges to a Gumbel random variable in the high temperature regime (see Corollary~\ref{thm:Gumbel}).


\subsection{Model} \label{sect:model}

Let $\X$ be a connected differentiable manifold of (real) dimension $n$ equipped with a Riemannian metric~$g$. Then
$\X$ is a metric measured space equipped with its geodesic distance $\dist(\cdot)$ and with its Borel $\sigma$--algebra and volume density $\omega$.
In local coordinates, this density can be expressed as $\d\omega = \sqrt{\det g}\, \d x_1 \cdots \d x_n$. 
 We denote by $\M(\X)$ the set of probability measures on $\X$ equipped with the weak topology. Then, $\M(\X)$ is a Polish space.  If $\X$ is compact, by scaling, we also assume that  $\omega(\X) =1$.

\medskip

We consider a gas of $N$ particles interacting via a two--body kernel $\g : \X \times \X \to (-\infty, +\infty]$ and confined by a potential $V : \X \to (-\infty, +\infty]$ at inverse temperature $\beta > 0$. That is for a given $N\in\N$, if $\x=(x_1,\dots, x_N) \in \X^N$ is a configuration, we define its energy:
\begin{equation} \label{def:H}
\Ha_N(\x)  := \beta \sum_{1\le i <j \le N} \g(x_i ,x_j) +  \sum_{1\le j \le N} V(x_j)  .
\end{equation}
Then, we consider the \emph{Gibbs measure} on $\X^N$:
\begin{equation} \label{def:Gibbs}
\P_N[\d \x ]  : =  \frac{ e^{- \Ha_N(\x)}}{\Z_N^{(V)}}  \prod_{j=1}^N \omega(\d x_j), 
\end{equation}
where the partition function
\begin{equation} \label{def:Z}
\Z_N^{(V)} := \int e^{- \Ha_N(\x) } {\textstyle \prod_{j=1}^N} \omega(\d x_j)   < +\infty . 
\end{equation}
In this article, we are interested in describing the local fluctuations of a random configuration sampled from $\P_N$  in the so--called \emph{mean field regime} where the coupling constant $\beta=\O(N^{-1})$ in the large $N$ limit. 
This is also known as the \emph{high temperature regime}, in contrast with the case where $\beta>0$ is fixed.
Our main results show that for a large class of interactions and confining potentials, as $\beta N \to \gamma$ with $\gamma>0$, the local statistics are Poisson in the bulk as well as near the boundary of the configuration of particles. 
The thermodynamical limit in the regime where  $\beta$ is fixed and the potential  $V = N \mathrm{V}$ is harder to analyze because of the strong correlations between the particles. 
Nevertheless, it has been established that the local fluctuations of the Gaussian $\beta$--ensembles on $\R$ are described by the Sine$_\beta$ point process in the bulk \cite{KS09, VV09} and by the  Airy$_\beta$ point process at the edges \cite{RRV11}.  The question of universality  of these limits for different potentials has also been tackled in \cite{BEY13, BEY14}. 
Recently, there have also been several considerable advances to describe the  thermodynamical limit for Coulomb and Riesz gases on $\R^d$ with $d\ge 2$.
In particular, Lebl\'e and Serfaty  showed that the  local statistics are described by point processes which minimize of a certain  \emph{free energy functional} introduced in \cite{LS17}. 
A local law, as well as precise estimates for the fluctuations have also been obtained by Amstrong and Serfaty in \cite{AS}. We refer to the review article \cite{Serfaty1} for a comprehensive overview of these developments and the appropriate references. 
On the other hand, if $\beta =0$,  the particles are independent and all the large $N$ properties of the gas can be inferred from the classical theory of independent random variables. 
The high temperature regime is of particular interest since it  interpolates between the two aforementioned cases and because of the competition between the energy and the entropy of the gas, see \cite{AB19, HL}. 

\medskip 

In order guarantee that the condition \eqref{def:Z} holds and that the Gibbs measure satisfies a large deviation principle as $N\to+\infty$, one needs to impose certain regularity and growth conditions on the  two--body kernel $\g$ and the potential $V$ of the model. 
We will work under the following general assumptions which  allow for singular interactions in order to model the repulsion between the particles. 

\begin{assumption} \label{ass:1}
 We suppose that the kernel $\g : \X \times \X \to [0, +\infty]$ and  the potential $V : \X \to[0, +\infty]$ satisfy the following properties:
\begin{enumerate}[leftmargin=.5cm]
\item[\rm 1)] $V$  is continuous\footnote{The set $\S_V$ is open and we assume it is not empty. Moreover, we set $e^{-V(x)}\equiv 0$ for all $x\in \{V=+\infty\}$.}  on $\S_V =\{V<+\infty\}$ and  $ \displaystyle \int e^{- V(x)} \omega(\d x) <+\infty$.
\item[\rm 2)]$\g$ is symmetric and lower semicontinuous. For any $\epsilon>0$, there exists $\delta>0$ such that for all $u,x\in\X$,  $ |\g(u,x)| \le \epsilon\, \dist(x,u)^{-n}$ if $\dist(u,x) \le \delta$. 
\item[\rm 3)]
For any $k\in\N$, we can decompose $\g = \g_k + \g^k$ where $\g_k \le k$ is continuous, $\g^k \ge 0$ and there exists   $p>1$ and a sequence $\mathrm{c}_k \to 0$ as $n\to+\infty$ such that   $ \displaystyle \int \g^k(u,x)^p\, \omega(\d x) \le \mathrm{c}_k$ for all $u \in \X$. 
\end{enumerate}
\end{assumption}

The main example of interaction which satisfies the Assumptions~\ref{ass:1}.2)3) are the Riesz kernels: for any $n\ge1$ and $\s \in (0, n)$, 
\begin{equation} \label{Riesz}
\g_\s(u,x) =  |x-u|^{-\s} , \qquad x , u\in\R^n .
\end{equation}
If $n\ge 3$ and $\s = n-2$, up to a constant, this corresponds to the Coulomb kernel. Then, the energy \eqref{def:H} is that of a gaz of $N$ electric charges at inverse temperature $\beta>0$ confined by the background potential $V$. 

\medskip

Observe that since we assume that $\g\ge 0$, the condition 1) implies that the partition function \eqref{def:Z}  is finite for all $N\in\N$, 
\begin{equation} \label{ZUB}
\Z_N^{(V)} \le \int e^{-\sum_{j=1}^NV(x_j) } {\textstyle \prod_{j=1}^N} \omega(\d x_j)  \le  \mathrm{C}_0^{N} 
\qquad\text{where}\quad
 \mathrm{C}_0 :=  \int e^{- V(x)} \omega(\d x) .
\end{equation}
Let us also record that the  condition 3) implies that the kernel $\g$ is locally integrable in the sense that for any compact set $\mathcal{K} \subset \X$ and for all $u \in\X$,  
$\displaystyle \int_{\mathcal{K}} \g(u,x) \omega(\d u) <+\infty$.

\medskip

If $\g$ is not bounded from below,  as it the case for the two--dimensional Coulomb kernel, 
\begin{equation} \label{def:log}
\g(u,x) = \log |x-u|^{-1} ,
\end{equation}
 then we need to modify slightly our assumptions. Let us observe that for any continuous function $\vartheta \ge 0$, we can always rewrite  the energy \eqref{def:H} as 
\begin{equation} \label{H2}
\Ha_N(\x)  = \beta \sum_{1\le i <j \le N} \widetilde\g(x_i ,x_j) +  \sum_{1\le j \le N} \widetilde{V}(x_j) 
\end{equation}
where  $\widetilde{V}(x) = V(x)- \beta(N-1)  \vartheta(x) $ and $\widetilde\g(u ,x) =  \g(u,x) + \vartheta(u) + \vartheta(x)$. The idea is to choose the function $\vartheta$ in such a way that for all $u,x\in\X$, 
\begin{equation} \label{def:vartheta}
-\g(u,x) \le  \vartheta(u) + \vartheta(x) ,
\end{equation}
Moreover, if $V$ grows sufficiently quickly, then we can also always assume that $\widetilde{V} \ge 0$ for all $N\in\N$ (since $\beta N =\O(1)$ and after possibly adding a constant to $V$ which does not change the Gibbs measure \eqref{def:Gibbs}).
In this setting, let us state our assumptions. 

\begin{assumption} \label{ass:2}
 We suppose that the kernel $\g : \X \times \X \to(-\infty, +\infty]$ satisfies the properties $2)$ and $3)$ from Assumptions~\ref{ass:1} and that there exists a continuous function $\vartheta \ge 0$ so that \eqref{def:vartheta} holds. Then, we suppose that the potential $\widetilde{V}_\kappa = V - \kappa \vartheta$ is bounded from below and satisfies the properties $1)$ for all $\kappa \ge 0$. 
\end{assumption}

\paragraph{Notation.} In the following, we use the notation $A \ll B$ if there exists a constant $C>0$ independent of $N$ such that $|A| \le C B$. 
We also use the notation $A \asymp B$ when  $A \ll B$ and $B\ll A$. 
For any $n\in\N$, we denote by $|\cdot|$ the Euclidean norm on $\R^n$. 
For any probability measure $\mu$ on $\X$, if it exists, we denote by $\mu(x)$ its density function.

\subsection{Law of large numbers} \label{sect:lln}

Let us review some of the basic properties of the particle system defined in the previous section. 
A convenient way to encode a configuration $\x \in\X^N$ of particle is through its empirical measure:
\[
\emp_N^{(\x)}:= N^{-1} {\textstyle \sum_{j=1}^N}  \boldsymbol\delta_{x_j} . 
\]
We usually ignore the superscript $(\x)$ and view $\emp_N$ as a random measure under $\P_N$. 
Let us now recall how to  describe the equilibrium properties of the gas. We  define the \emph{energy functional}:
\begin{equation} \label{def:E}
\En(\mu) := \iint \g(x,z) \mu(\d z)\mu(\d x)  , \qquad \mu\in \M(\X).  
\end{equation}
The interpretation is that $\mu$ represents a cloud of particles and $\En(\mu)$ is the self--energy of this cloud.  
 Moreover, the  potential generated by $\mu\in \M(\R^d)$ will be denoted by 
\[
\U^\mu(x)  : =  \int \g(x,z) \mu(\d z)  , \qquad x\in\X.
\]  

If the kernel  $\g\ge0$, then $\En(\mu)$ and $\U^\mu(x)$ are well--defined for all  $\mu\in \M(\R^d)$. On the other--hand, when $\g$ is not bounded from below, we can consider instead the  \emph{weighted energy}:
 \begin{equation} \label{def:Ew}
\widetilde\En(\mu) := \iint \widetilde\g(x,z) \mu(\d z)\mu(\d x)  , \qquad \mu\in \M(\X), 
\end{equation}
where $\widetilde\g(u ,x) =  \g(u,x) + \vartheta(u) + \vartheta(x)$ as in \eqref{def:vartheta}. 

\medskip

Recall that for any $\nu \in \M(\X)$,  the relative entropy (or Kullback--Leibler divergence) with respect to $\nu$ is defined by
\begin{equation}   \label{def:ent}
 \Hi(\mu|\nu)  =  \begin{cases} \displaystyle \int \log(\rho)\, \d\mu  &\text{if } \d\mu = \rho\, \d\nu \\
 +\infty   &\text{else} 
 \end{cases} , \qquad \mu\in\M(\X) . 
\end{equation}
The function $\mu \mapsto \Hi(\mu|\nu)$  can be though of as a \emph{distance} from $\mu$ to $\nu$ as can be seen from Lemma~\ref{lem:entropy}.
The reference measure that we consider are
\begin{equation}  \label{def:mu0}
\nu_\gamma(\d x) = \mathrm{C}_\gamma^{-1} e^{-\widetilde{V}_\gamma(x)} \omega(\d x) 
\qquad\text{where}\quad
\widetilde{V}_\gamma = V - \gamma \vartheta \quad\text{for}\quad \gamma \ge 0. 
\end{equation}
Under the Assumptions~\ref{ass:2}, we can choose $\mathrm{C}_\gamma>0$ (increasing) such that $\nu_\gamma$ is a probability measure. The relevant functional to describe the equilibrium configuration of our particle system as $\beta N\to\gamma$ and $N\to+\infty$ is the \emph{free energy}: 
\begin{equation} \label{def:F}
 \F_\gamma(\mu) = \frac\gamma2 \widetilde\En(\mu) + \Hi(\mu|\nu_\gamma)  , \qquad  \mu\in  \M(\X) \text{ and }\gamma \ge 0.
 \end{equation}
This might seem like a slightly unusual way to define the free energy, but observe that it has the following properties: $ \F_\gamma \ge 0$ and it is lower--semicontinuous, so that it attains its minimum and all minimizers are absolutely continuous with respect to $\nu_\gamma$ (and a fortiori $\nu_0$). 
Moreover, it follows from formula \eqref{F2} below  that there exists a constant $c_\gamma>0$ such that for any $\mu\in\M(\R^n)$ with a density $\mu \ll \nu_0$, 
\begin{equation} \label{F3}
 \F_\gamma(\mu) = \frac\gamma2  \En(\mu) + \Hi(\mu|\nu_0) + c_\gamma <+\infty . 
\end{equation}

\medskip

The following Law of large numbers for the empirical measure can be extracted from the literature (it basically  follows from the  large deviation principle in~\cite{GZ18}). 

   \begin{proposition} \label{prop:eq}
   Let us suppose that the  Assumptions~\ref{ass:2} hold and that the free energy $\eqref{def:F}$ has a {\bf unique} minimizer denoted by $\mu_\gamma \in \M(\X)$. 
Then under $\P_N $, the empirical measure $\widehat{\mu}_N$ converges in probability to $\mu_\gamma$ as $\beta N\to \gamma$ and $N\to+\infty$.  
Moreover, the {\bf equilibrium  measure}  $\mu_\gamma$ satisfies
\begin{equation}  \label{eq:mu}
\mu_\gamma(\d x) = \mathrm{L}_\gamma^{-1} e^{- \gamma \U^{\mu_\gamma}(x) - V(x)} \omega(\d x)  , \qquad  \mathrm{L}_\gamma  >0 . 
\end{equation}
\end{proposition}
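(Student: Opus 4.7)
The plan is to derive the convergence from a large--deviation principle (LDP) for $\widehat\mu_N$ at speed $N$ whose rate function is $\F_\gamma-\inf\F_\gamma$, in the spirit of \cite{GZ18}, and then to identify the minimizer by computing its Euler--Lagrange equation. Rewriting $\P_N$ against the product reference $\nu_\gamma^{\otimes N}$ using the decomposition \eqref{H2} gives
\[
\P_N[\d\x]=\frac{\mathrm{C}_\gamma^N}{\Z_N^{(V)}}\exp\Bigl(-\beta\sum_{i<j}\widetilde\g(x_i,x_j)+(\beta(N-1)-\gamma)\sum_j\vartheta(x_j)\Bigr)\prod_j\nu_\gamma(\d x_j).
\]
Since $\beta(N-1)-\gamma=o(1)$ and $\vartheta$ is $\nu_\gamma$--integrable, the linear tilt is $e^{o(N)}$ on exponentially typical sets; interpreting the pair sum off--diagonal gives $\beta\sum_{i<j}\widetilde\g(x_i,x_j)=\tfrac{\gamma N}{2}\widetilde\En(\widehat\mu_N)+o(N)$ up to a diagonal correction. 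Combined with Sanov's theorem, which supplies the LDP for $\widehat\mu_N$ under $\nu_\gamma^{\otimes N}$ at speed $N$ with rate $\Hi(\cdot|\nu_\gamma)$, a Varadhan--type tilting then yields the LDP at speed $N$ with rate $\F_\gamma-\inf\F_\gamma$.

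Granted the LDP, the uniqueness of $\mu_\gamma$ and the goodness of the rate function give, for every open weak neighbourhood $\mathcal U$ of $\mu_\gamma$,
\[
\P_N[\widehat\mu_N\notin\mathcal U]\le\exp\bigl(-N(\inf_{\mathcal U^c}\F_\gamma-\inf\F_\gamma)+o(N)\bigr)\longrightarrow 0,
\]
which is exactly convergence in probability. To identify $\mu_\gamma$, I would perturb by $\mu_t=(1-t)\mu_\gamma+t\mu$ with $\mu$ an arbitrary probability measure of finite free energy, use \eqref{F3} to express $\F_\gamma$ through the $\nu_0$--reference (which cancels the $\vartheta$ contributions between $\widetilde\En$ and $\Hi(\cdot|\nu_\gamma)$), and differentiate at $t=0^+$ to obtain the first--order condition $\gamma\U^{\mu_\gamma}(x)+V(x)+\log\mu_\gamma(x)=\text{const}$ on $\supp\mu_\gamma$; exponentiating yields \eqref{eq:mu}, with $\mathrm{L}_\gamma$ fixed by total mass one.

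The main obstacle is the diagonal singularity of $\g$: the functional $\En$ is not weakly continuous on $\M(\X)$, and the product $\widehat\mu_N\otimes\widehat\mu_N$ carries an atomic mass on the diagonal that cannot be discarded naively. I would handle this via the splitting $\g=\g_k+\g^k$ of Assumption~\ref{ass:1}.3): Varadhan's lemma applies cleanly to the bounded continuous piece $\widetilde\g_k$, while the contribution of the singular tail $\widetilde\g^k$ is dominated, by H\"older's inequality with exponent $p>1$, by $\beta N\cdot\mathrm{c}_k^{1/p}$ times an $L^q$--norm of $\widehat\mu_N$ that is controllable on exponentially typical events. Sending first $N\to\infty$ and then $k\to\infty$ closes both the upper and lower bounds of the LDP, and the same scheme absorbs the missing diagonal term $\int\widetilde\g(x,x)\widehat\mu_N(\d x)$. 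This approximation procedure is the technical heart of the Garcia--Zelada LDP that the present proposition invokes.
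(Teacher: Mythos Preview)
Your outline is correct and follows the paper's route: the LDP part is exactly the Garcia--Zelada argument \cite{GZ18} that the paper invokes (the paper merely verifies the upper-- and lower--limit assumptions of that framework rather than rerunning Sanov plus Varadhan), and from the LDP with a unique minimizer the convergence in probability is immediate.

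For the self--consistent equation, however, your convex perturbation $\mu_t=(1-t)\mu_\gamma+t\mu$ only yields the Euler--Lagrange identity on $\supp\mu_\gamma$, and you have not argued that $\mu_\gamma>0$ $\omega$--a.e.\ on $\S_V$. Without this, \eqref{eq:mu} could fail on a set of positive $\omega$--measure where $\mu_\gamma$ vanishes while the right--hand side is strictly positive. The paper handles this separately (its Step~1) by a direct $\epsilon\log\epsilon$ contradiction: if $\mu_\gamma=0$ on a bounded set $\mathcal A\subset\S_V$ with $\omega(\mathcal A)>0$, the competitor $(1-\epsilon)\mu_\gamma+\epsilon\,\omega(\mathcal A)^{-1}\1_{\mathcal A}$ has strictly smaller free energy for small $\epsilon$. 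The paper then uses a multiplicative perturbation $(1+\epsilon\phi)\mu_\gamma$ rather than your convex one, which sidesteps the issue of differentiating the entropy where the density vanishes. Finally, the paper shows (its Step~3), via the splitting $\g=\g_k+\g^k$ and the uniform $L^p$ bound on $\g^k$, that $\U^{\mu_\gamma}$ is continuous on $\X$; this upgrades \eqref{eq:mu} from an a.e.\ identity to one holding for \emph{all} $x\in\X$, which is what the later correlation--function arguments actually use.
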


For completeness, we review the important steps of the proof of Proposition~\ref{prop:eq} in Section~\ref{sect:eq} of the Appendix.
In particular, we carefully derive the self--consistent equation \eqref{eq:mu} which characterizes the minimizer(s) of  $\F_\gamma$ (see Proposition~\ref{prop:regularity}).
It turns out that under the assumptions of Proposition~\ref{prop:eq}, the equilibrium potential $\U^{\mu_\gamma}$ is continuous on $\X$ so that the equation \eqref{eq:mu} is satisfied by the equilibrium density for all $x\in\X$.  
 Proposition~\ref{prop:eq} implies that for any function $f\in \C(\X)$ which is uniformly bounded and for any $\epsilon>0$,
\begin{equation} \label{LLN}
\lim_{N\to+\infty} \P_N \bigg[ \bigg|  \int f \d\emp_N - \int f \d\mu_\gamma \bigg| \ge \epsilon \bigg] =0 . 
\end{equation}

\medskip

If the limiting temperature $\gamma=0$, the equilibrium density is $\mu_0(x)=  \mathrm{L}_0^{-1} e^{- V(x)}$ and it is the (unique) minimizer of the relative entropy $\Hi(\cdot|\mu_0) $, so our notation are consistent. 
Note that if the kernel $\g \ge 0$,  then $\vartheta=0$ and $\nu_\gamma = \mu_0$. 
Then, we inferred from the equation \eqref{eq:mu} that   $0\le \U^{\mu_\gamma}  \ll 1 $, so that the equilibrium density satisfies $\mu_\gamma(x) \asymp e^{-V(x)}$ for all $x\in\X$ and for any $\gamma\ge 0$. 
In fact, under the Assumptions~\ref{ass:2}, we easily obtain he estimate for all $x\in \X$, 
\begin{equation}  \label{muk:tail}
e^{-V(x)} \ll  \mu_\gamma(x) \ll  e^{-\widetilde{V}_\gamma(x)} ,
\end{equation}
see \eqref{Uest} below. 
Finally, the issue about the uniqueness of the minimizer of the free energy in the case of the Riesz \eqref{Riesz} and log \eqref{def:log} gases is addressed at the end of Section~\ref{sect:eq}. 

\subsection{Local fluctuations}

In this section, we present our main result concerning the local fluctuations of the system of particles defined in Section~\ref{sect:model}.
Let us fix a point $E \in \S_V$ and let $U$ be a normal neighborhood of $E$ in  $\S_V$  and $\varphi : U\to\R^n$ denotes a normal coordinates chart. Then, we define the \emph{local point process} around $E$ as 
 \begin{equation} \label{def:Xi} 
 \Xi_N : = \sum_{x_j \in U}  \boldsymbol\delta_{N^{1/n} \varphi(x_j)} . 
 \end{equation}
Our main result states that $\Xi_N$ converges in distribution to a  (homogeneous\footnote{
The reference measure on $\R^n$ is the Lebesgue measure and we refer to Definition~\ref{def:Poisson} in the appendix for the definition of a Poisson point process. In fact, one could use any $\C^1$ chart to define the local point process~\eqref{def:Xi}, then the limiting Poisson process would also be homogeneous but its intensity need not be given by the equilibrium density $\mu_\gamma(E)$ at $E\in \S_V$.}) Poisson point process on $\R^n$ as the number of particles $N\to+\infty$ and  $N\beta \to \gamma$. 

 \begin{theorem} \label{thm:LF}
Fix $\gamma\ge 0$ and suppose that the  Assumptions~\ref{ass:2} hold and that  the free energy \eqref{def:F} has a {\bf unique} minimizer $\mu_\gamma \in \M(\X)$. Then, under $\P_N$, $\Xi_N$ converges in distribution as $N\to+\infty$ and  $N\beta \to \gamma$  to a homogeneous Poisson process on $\R^n$ with intensity $\mu_\gamma(E) >0$. 
 \end{theorem}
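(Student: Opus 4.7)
The plan is to prove convergence of the Laplace functional: for every nonnegative continuous $f$ with compact support in $\R^n$,
\[
\E_N\bigl[e^{-\Xi_N(f)}\bigr] \longrightarrow \exp\Bigl(-\mu_\gamma(E)\int_{\R^n}(1-e^{-f(y)})\,\d y\Bigr),
\]
which is the Laplace functional of the Poisson process with intensity $\mu_\gamma(E)$ and determines the weak limit. Introducing the local weight $\phi_N(x) := f(N^{1/n}\varphi(x))\mathbf{1}_{x\in U}$, expanding $e^{-\Xi_N(f)} = \prod_{j=1}^N(1+h_N(x_j))$ with $h_N := e^{-\phi_N}-1\in[-1,0]$ into subset sums, and using the symmetry of $\P_N$ give
\[
\E_N\bigl[e^{-\Xi_N(f)}\bigr] = \sum_{k=0}^N\frac{1}{k!}\int h_N(x_1)\cdots h_N(x_k)\,\rho_N^{(k)}(x_1,\ldots,x_k)\,\omega^{\otimes k}(\d\mathbf{x}),
\]
where $\rho_N^{(k)}$ is the $k$-point correlation function. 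Changing variables $y_i = N^{1/n}\varphi(x_i)$, with $\omega(\d x_i) = N^{-1}J(y_i/N^{1/n})\,\d y_i$ and $J\to 1$ at $y=0$ (normal chart), reduces the theorem to the pointwise factorization $N^{-k}\rho_N^{(k)}(x_1,\ldots,x_k) \longrightarrow \mu_\gamma(E)^k$ as each $x_i\to E$, together with a uniform $k$-dependent upper bound licensing dominated convergence both inside each integral and in the sum over $k$.

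The factorization is read off from the standard identity
\[
\rho_N^{(k)}(\mathbf{x}) = \frac{N!}{(N-k)!}\,e^{-\beta\sum_{i<j}\g(x_i,x_j) - \sum_i V(x_i)}\,\frac{\Z_{N-k}^{(W_k)}}{\Z_N^{(V)}}, \qquad W_k(u) := V(u) + \beta\sum_{i=1}^k \g(x_i,u),
\]
obtained by fixing the first $k$ positions and integrating out the remaining $N-k$. I then treat each factor: $N!/(N-k)!\sim N^k$; $\sum_i V(x_i)\to kV(E)$ by continuity at $E\in\S_V$; the interaction of the $k$ fixed points vanishes because, at spacing $|x_i-x_j|\asymp N^{-1/n}$, Assumption~\ref{ass:1}.2 yields $\beta\g(x_i,x_j)\le\epsilon\beta N|y_i-y_j|^{-n}\to\epsilon\gamma|y_i-y_j|^{-n}$ for arbitrarily small~$\epsilon$. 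For the ratio of partition functions I write
\[
\frac{\Z_{N-k}^{(W_k)}}{\Z_N^{(V)}} = \frac{\Z_{N-k}^{(V)}}{\Z_N^{(V)}}\cdot \E_{N-k}^{(V)}\Bigl[\exp\Bigl(-\beta\sum_{i=1}^k\sum_{j=1}^{N-k}\g(x_i,y_j)\Bigr)\Bigr].
\]
Under $\P_{N-k}^{(V)}$ we still have $\beta(N-k)\to\gamma$ and $\widehat\mu_{N-k}\to\mu_\gamma$ in probability (Proposition~\ref{prop:eq}), so $\beta\sum_j\g(x_i,y_j) = \beta(N-k)\int\g(x_i,\cdot)\,\d\widehat\mu_{N-k}\to \gamma\,\U^{\mu_\gamma}(E)$ in probability by continuity of $\U^{\mu_\gamma}$. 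Bounded convergence, available because $\g\ge 0$ keeps the integrand in $[0,1]$, identifies the limit of the second factor as $e^{-k\gamma\U^{\mu_\gamma}(E)}$. An analogous one-particle integration identity yields $\Z_N^{(V)}/\Z_{N-1}^{(V)} = \int e^{-V(x)}\E_{N-1}^{(V)}[e^{-\beta\sum_j\g(x,y_j)}]\omega(\d x) \to \int e^{-V-\gamma\U^{\mu_\gamma}}\,\d\omega = \mathrm{L}_\gamma$, hence $\Z_{N-k}^{(V)}/\Z_N^{(V)}\to \mathrm{L}_\gamma^{-k}$. Combining everything with the self-consistent equation~\eqref{eq:mu} produces $[\mathrm{L}_\gamma^{-1}e^{-V(E)-\gamma\U^{\mu_\gamma}(E)}]^k = \mu_\gamma(E)^k$. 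The same ingredients (using $\g\ge 0$ and hence $\Z_{N-k}^{(W_k)}\le\Z_{N-k}^{(V)}$) give a uniform bound $N^{-k}\rho_N^{(k)}\le C^k$ on $\{y_i\in\supp f\}$, and with $|h_N|\le 1$ the series is dominated by $\sum_k(C|\supp f|)^k/k!<\infty$, closing the argument.

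The main obstacle is the case of a kernel not bounded below, e.g.\ the logarithmic kernel~\eqref{def:log}, where $e^{-\beta\sum\g}$ is no longer in $[0,1]$ and bounded convergence fails. I would handle this with the reformulation~\eqref{H2}--\eqref{def:vartheta}: setting $\widetilde\g(u,x) = \g(u,x) + \vartheta(u) + \vartheta(x)\ge 0$ and $\widetilde V_\gamma = V - \gamma\vartheta$, one reruns the entire analysis at the $\widetilde\g$-level, and the extra $\vartheta$-factors produced when translating back to $(\g,V)$ reorganize, via the normalising constant $\mathrm{C}_\gamma$ of $\nu_\gamma$ and the two-sided bound~\eqref{muk:tail}, into the correct intensity $\mu_\gamma(E)$. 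A secondary issue is that the LLN~\eqref{LLN} is stated only for bounded continuous test functions, whereas I require it against the singular $\g(x,\cdot)$. For this I use the decomposition $\g = \g_m + \g^m$ of Assumption~\ref{ass:1}.3: the truncation $\g_m$ is bounded continuous and covered by~\eqref{LLN}, while the residual $\g^m$ is controlled by H\"older from the $L^p$-bound $\int(\g^m)^p\,\d\omega\le\mathrm{c}_m$ together with an $L^{p'}$-bound on $\rho_{N-k}^{(1)}/(N-k)$ stemming from~\eqref{muk:tail}; since $\mathrm{c}_m\to 0$, taking $m\to\infty$ after $N\to\infty$ suppresses the singular part uniformly.
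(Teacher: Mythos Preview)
Your overall strategy is the same as the paper's: express the correlation functions via the marginal formula, show $N^{-k}\rho_N^{(k)}(x_1,\dots,x_k)\to\mu_\gamma(E)^k$ using the LLN for the random potential $\int\g(x_i,\cdot)\,\d\widehat\mu_{N-k}$ together with partition-function ratio asymptotics, and conclude by dominated convergence in the Laplace-functional expansion. The decomposition $\g=\g_m+\g^m$ to upgrade the LLN to the singular test function $\g(x_i,\cdot)$ is also exactly how the paper proceeds.

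There is, however, a genuine gap. To control the singular remainder you need
\[
\E_{N-k}\Bigl[\int\g^m(x_i,\cdot)\,\d\widehat\mu_{N-k}\Bigr]=\int\g^m(x_i,u)\,\rho_{N-k}(u)\,\omega(\d u),
\]
and you appeal to an $L^{p'}$ bound on the \emph{finite-$N$} one-point density $\rho_{N-k}$ ``stemming from~\eqref{muk:tail}''. But \eqref{muk:tail} is a bound on the \emph{equilibrium} density $\mu_\gamma$, not on $\rho_N$; weak convergence $\rho_N\Rightarrow\mu_\gamma$ gives no uniform-in-$N$ $L^{p'}$ control. The paper fills this with the Wegner estimate (Proposition~\ref{prop:Wegner}): $\rho_N(u)\ll e^{-\widetilde V(u)}$ uniformly in $N$, proved via Jensen on the ratio identity~\eqref{Zratio}. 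The same argument is what supplies the uniform bound $\Z_{N-k}^{(V)}/\Z_N^{(V)}\ll C^k$ that you also use without justification for the $C^k$ domination of the series.

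Your handling of the unbounded-below case is likewise too loose. Rewriting in terms of $\widetilde\g\ge0$ replaces $V$ by the $N$-dependent $\widetilde V=V-\beta(N-1)\vartheta$, and ``bounded convergence'' is no longer available for the Laplace-type expectations. The paper's substitute is the moment bound~\eqref{UB2}, $\E_N[e^{\beta r\sum_j\vartheta(x_j)}]\le C(\kappa)^r$, obtained from Jensen and the partition-function estimates~\eqref{ZUB}--\eqref{ZLB}; this is what provides the uniform integrability needed both for the limit of $\E_{N-k}[e^{-\beta(N-k)\sum_i X_{N-k}(x_i)}]$ and for the dominated convergence in the $\Z_N/\Z_{N-1}$ integral. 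Your sentence about factors ``reorganizing via $\mathrm{C}_\gamma$ and~\eqref{muk:tail}'' does not capture this and would not close the argument as written.
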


 Theorem~\ref{thm:LF} confirms that as expected, Poisson statistics describe  the local fluctuations of a Coulomb gas in the mean field regime. However,  by analogy with the 1-dimensional case studied in~\cite{HL},  we still expect some correlations between the particles in the sense that the global fluctuations of the equilibrium remain non--trivial\footnote{We mean that these global fluctuations are not described by a \emph{white noise} as it is the case when $\beta=0$.} as $N\beta \to \gamma >0$. 
Moreover, in view of the result from \cite{AD14b}, one expects that Poisson statistics describes the microscopic behavior of  $\beta$--ensembles regardless of the confining potential as long as the inverse temperature $\beta\to0$ as $N\to+\infty$.

\medskip

Observe that it follows from the estimate \eqref{muk:tail} that $\mu_\gamma(E) >0$ if and only if   $E \in \S_V$, so that the limiting process is non trivial. 
The proof of Theorem~\ref{thm:LF} is given in Section~\ref{sect:LF} and it relies only on the Assumptions~\ref{ass:2} for our particle system and the Law of large numbers from Proposition~\ref{prop:eq}. 
For completeness, we review briefly the concept of convergence for point processes in Section~\ref{sect:pp}. 

\paragraph{Coulomb gas in the Euclidean case.} 
Our main applications of  Theorem~\ref{thm:LF} is to deduce universality of local fluctuations for Coulomb and Riesz gases on $\R^n$ for any $n\ge 1$. 
Let us recall that the Riesz kernels on $\R^n$ are given by  $\g_\s(u,x) =  |x-u|^{-\s}$ for an exponent  $\s \in (0, n)$. Then, we consider the Gibbs measure \eqref{def:H}--\eqref{def:Gibbs} on $(\R^n)^N$ with interaction kernel $\g_\s$ and a general potential $V :\R^n \to[0, +\infty]$ such that $e^{- V} \in L^1(\d x)$ where $\d x$ denotes the Lebesgue measure. 
In this case, we obtain the following result which describes the local fluctuation of the system of particles in the bulk as $N\beta \to \gamma$. 

\begin{corollary}[Riesz gases -- bulk fluctuations] \label{cor:RBF}
If $V$  is continuous on $\S_V =\{V<+\infty\}$, then for any fixed $E\in \S_V$, the  point process $\Xi_N =  \sum_{j=1}^N  \boldsymbol\delta_{N^{1/n} (x_j-E)}$ converges in distribution as $N\to+\infty$ and  $N\beta \to \gamma$ with $\gamma \ge 0$  to a  homogeneous Poisson point process on $\R^n$.
\end{corollary}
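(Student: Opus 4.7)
The plan is to reduce the corollary to Theorem~\ref{thm:LF} by checking that the Riesz setting on $\R^n$ falls squarely under the general framework of Section~\ref{sect:model}. Because $\R^n$ with the Euclidean metric is flat, normal coordinates around any $E\in\S_V$ are just $\varphi(x)=x-E$, and for any compact test set in $\R^n$ only particles within distance $\O(N^{-1/n})$ of $E$ can contribute to the rescaled process as $N\to+\infty$; so the point process $\Xi_N$ in the corollary coincides with the one defined in \eqref{def:Xi} up to a truncation that is asymptotically irrelevant. It therefore suffices to verify the Assumptions, the positivity of $\mu_\gamma(E)$, and uniqueness of the minimizer of $\F_\gamma$.

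For the Assumptions, note that $\g_\s\ge0$ is symmetric and lower semicontinuous, so I would take $\vartheta=0$ and work under Assumption~\ref{ass:1}, which is contained in Assumption~\ref{ass:2} in that case. For property 2), write $|x-u|^{-\s}=|x-u|^{n-\s}\cdot|x-u|^{-n}$: since $\s<n$ we have $|x-u|^{n-\s}\to 0$ as $|x-u|\to0$, giving the required bound for small $\delta$. For property 3), the natural decomposition is $\g_k:=\min(\g_\s,k)$ (continuous and bounded by $k$) and $\g^k:=(\g_\s-k)_+$, supported in $\{|x-u|<k^{-1/\s}\}$. Since $\s<n$, one can pick $p\in(1,n/\s)$ and estimate in polar coordinates
\[
\int \g^k(u,x)^p\,\d x\le \int_{|x-u|<k^{-1/\s}}|x-u|^{-\s p}\,\d x=\mathrm{c}_k\xrightarrow[k\to\infty]{}0.
\]
Property 1) on $V$ is exactly the hypothesis of the corollary, and since $\vartheta=0$ we have $\widetilde V_\kappa=V$ for all $\kappa\ge0$.

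The delicate remaining point is the uniqueness of the minimizer of the free energy $\F_\gamma(\mu)=\tfrac{\gamma}{2}\En(\mu)+\Hi(\mu|\mu_0)$. The relative entropy is strictly convex on probability measures absolutely continuous with respect to $\mu_0$. The Riesz energy is strictly positive definite: for $\s\in(0,n)$, the kernel $|x|^{-\s}$ has (tempered) Fourier transform proportional to $|\xi|^{\s-n}>0$, so $\En(\mu-\nu)>0$ for any two distinct finite-energy probability measures. Summing these yields strict convexity of $\F_\gamma$ on the sublevel sets where it is finite, hence uniqueness of the minimizer $\mu_\gamma$.

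With all assumptions and uniqueness in hand, Theorem~\ref{thm:LF} applies, and it gives convergence of $\Xi_N$ to a homogeneous Poisson point process on $\R^n$ of intensity $\mu_\gamma(E)$, which is strictly positive by the lower bound in \eqref{muk:tail} since $E\in\S_V$. The main technical obstacle is the strict convexity of the Riesz energy: this is classical via Fourier analysis, but one has to be careful to set it up on the cone of finite-energy signed measures of zero mass in order to rigorously conclude uniqueness. Everything else is a routine verification.
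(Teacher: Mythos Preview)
Your proposal is correct and follows essentially the same route as the paper: verify Assumptions~\ref{ass:1} for the Riesz kernel with the truncation $\g_k=\g_\s\wedge k$, establish uniqueness of the minimizer of $\F_\gamma$ via strict convexity, and invoke Theorem~\ref{thm:LF}. The only cosmetic difference is that you sketch the positive-definiteness of the Riesz energy directly through the Fourier transform of $|x|^{-\s}$, whereas the paper packages this into Lemma~\ref{lem:Rminimizer} (which in turn appeals to \cite[Theorem~9.8]{LL01}); both arguments are standard and equivalent in content.
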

 
 The proof of Corollary \ref{cor:RBF} consists in verifying that the free energy \eqref{def:F} has a unique minimizer and that the Riesz kernels $\g_\s$ satisfy the Assumptions~\ref{ass:1}.2)3) if we set
$\g_{\s,k} = \g_s \wedge k $  for $k\in\N$. 
Since $\g_\s \ge 0$ is a positive--definite translation--invariant kernel, the free energy $\F_\gamma  = \frac\gamma2  \En + \Hi(\cdot|\mu_0)$ is strictly convex, so that it has a unique minimizer -- see Lemma~ \ref{lem:Rminimizer} in the Appendix for a precise claim. 
Moreover, the Assumptions~\ref{ass:1}.2) clearly holds and the function $\g_{\s,k}$ are continuous and it holds for all $k\in \N$, $p < n/\s$  and $u\in\R^n$, 
 \begin{equation} \label{intcondition}
  \int \g_\s^k(u,x)^p \d x  = \int_{|x| \le k^{-1/\s}} |x|^{-p\s} \d x =  \Omega_n k^{-(n/\s-p)} ,
 \end{equation}
 where $\Omega_n$ denotes the volume of the unit ball in $\R^n$. 
 
 \medskip
 
 We also obtain the analogous result for log gases in dimensions $n=1,2$. Let us denote  for $x,u\in \R^n$, 
 \begin{equation} \label{logkernel}
 \g(x,u) = \log|x-u|^{-1} 
 \qquad\text{and}\qquad
 \vartheta(x) = \log(1+|x|) , 
 \end{equation}
so that the condition \eqref{def:vartheta} holds.
Like \eqref{intcondition}, we easily verify that the log kernel $\g$ satisfies the Assumptions~\ref{ass:1}.2)3)  
if we set $\g_k = \g \wedge k $  for $k\in\N$. 
Moreover, by Lemma~ \ref{lem:Lminimizer} in the Appendix, in dimensions 1 and 2, the free energy \eqref{def:F} has a unique minimizer $\mu_\gamma \in \M(\X)$ for any $\gamma \ge 0$. 
Thus,  we obtain the following result directly from Theorem~\ref{thm:LF}. 
 
 \begin{corollary}[Log gases -- bulk fluctuations] \label{cor:LBF}
Suppose that $V$ is continuous on the set $\S_V =\{V<+\infty\}$ and that for all $\kappa \ge 0$, 
\begin{equation} \label{c3}
\inf_{x\in \S_V}\big\{ V(x) -  \kappa \vartheta(x) \big\} >-\infty
\qquad\text{and}\qquad
\int |x|^{\kappa} e^{- V(x)}\d x  <+\infty  . 
\end{equation}
Then, for any $E\in \S_V$, the  point process $\Xi_N =  \sum_{j=1}^N  \boldsymbol\delta_{N^{1/n} (x_j-E)}$ converges weakly  as $N\to+\infty$ and  $N\beta \to \gamma$ with $\gamma \ge 0$  to a  homogeneous Poisson point process on $\R^n$.
\end{corollary}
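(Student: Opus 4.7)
My plan is to simply verify that the hypotheses of Theorem~\ref{thm:LF} are satisfied in this setting, and then invoke it. There are essentially three things to check: that the kernel--potential pair falls under Assumptions~\ref{ass:2}, that the free energy has a unique minimizer, and that the point $E$ lies in the support of $\mu_\gamma$.

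First I would verify condition \eqref{def:vartheta} for $\vartheta(x) = \log(1+|x|)$. The triangle inequality gives $|x-u| \le (1+|x|)(1+|u|)$, so taking logarithms yields $-\g(u,x) = \log|x-u| \le \vartheta(x) + \vartheta(u)$. Next, for the decomposition $\g_k = \g \wedge k$ and $\g^k = (\g - k)_+$, I would check properties~2) and~3) of Assumptions~\ref{ass:1} as the author suggests. For property~2), given $\epsilon > 0$, since $\log(1/r) = o(r^{-n})$ as $r\to 0^+$ for $n\in\{1,2\}$, there exists $\delta > 0$ such that $\log r^{-1} \le \epsilon r^{-n}$ whenever $r \le \delta$. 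For property~3), the function $\g^k(u,\cdot)$ is supported where $|x-u|\le e^{-k}$, so by translation invariance
\[
\int \g^k(u,x)^p\, \d x = \int_{|y|\le e^{-k}} (\log|y|^{-1})^p\, \d y,
\]
which is finite for any $p\ge 1$ and tends to $0$ as $k\to+\infty$, so we may take $p=2$ (say) and set $\mathrm{c}_k$ to be this integral.

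Next I would verify condition~1) for $\widetilde V_\kappa = V - \kappa \vartheta$. The first inequality in \eqref{c3} gives that $\widetilde V_\kappa$ is bounded below, while continuity of $\widetilde V_\kappa$ on $\S_V$ is immediate since $\vartheta$ is continuous on $\R^n$. The integrability
\[
\int e^{-\widetilde V_\kappa(x)}\, \d x = \int (1+|x|)^\kappa e^{-V(x)}\, \d x < +\infty
\]
follows from the second condition in \eqref{c3} (together with the fact that $e^{-V}$ is integrable near the origin, which is contained in the $\kappa=0$ case). Thus Assumptions~\ref{ass:2} hold.

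The uniqueness of the minimizer of $\F_\gamma$ in dimensions $n=1,2$ is the claim of Lemma~\ref{lem:Lminimizer} from the Appendix, which we invoke directly; this is the only nontrivial input, and its proof rests on the fact that the logarithmic kernel is conditionally positive definite on such $\R^n$ (via Fourier analysis), hence the energy part of $\F_\gamma$ is strictly convex on the set where it is finite and combines with strict convexity of the entropy to give a unique minimizer. Finally, by \eqref{muk:tail} we have $\mu_\gamma(E) \gg e^{-V(E)} > 0$ for $E\in \S_V$, so the limiting Poisson process is nontrivial. Applying Theorem~\ref{thm:LF} with the identity chart $\varphi = \mathrm{id} - E$ on a neighborhood of $E$ yields the convergence of $\Xi_N$ to the desired homogeneous Poisson point process on $\R^n$.

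The main obstacle, as noted, is the uniqueness of the minimizer, which is deferred to the Appendix; the rest of the argument is a mechanical verification of the hypotheses.
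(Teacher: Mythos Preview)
Your proposal is correct and follows essentially the same route as the paper: verify Assumptions~\ref{ass:2} for the log kernel with $\vartheta(x)=\log(1+|x|)$ and the decomposition $\g_k=\g\wedge k$, invoke Lemma~\ref{lem:Lminimizer} for uniqueness of the minimizer, and then apply Theorem~\ref{thm:LF}. One small slip: with $\g^k=(\g-k)_+$ the integrand in your displayed integral should be $(\log|y|^{-1}-k)^p$, not $(\log|y|^{-1})^p$; your expression is nonetheless an upper bound and still tends to $0$ as $k\to\infty$, so the conclusion is unaffected.
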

  
  \paragraph{Coulomb gas on compact Riemannian manifolds.}
  This particle system has been introduced in \cite{GZ18} and further studied in \cite{GZ19}.
  It consists of a Gibbs measure \eqref{def:H}--\eqref{def:Gibbs} where the interaction kernel $\g$ is the Green function of the Laplace--Beltrami operator on a compact manifold $\X$ of dimension $n\ge 3$. This means that in distributional sense, $\Delta \g(\cdot,x) = -\boldsymbol\delta_x +1$ for every $x\in\X$. 
  The Green function $\g$ is symmetric, lower semicontinuous and it satisfies $\int \g(x,u) \omega(\d u) =0$ for all $u\in\X$. Important other properties (which relies on the compactness of $\X$ -- see \cite[Chapter 4]{Aubin98}) include that $\g \ge -c_{\X}$ for a constant $c_{\X}$ and that
  \begin{equation} \label{est:g}
  \g(x,u) \ll \dist(x,u)^{2-n} . 
  \end{equation}
 Moreover, it holds for any function $f\in L^2(\X)$,
  \[
0 \le \En(f)= \iint \g(x,z)  f(x) f(z) \omega(\d z)\omega(\d x)   \ll \|f\|_{L^2(\X)}^2 , 
  \]
 This follows from the fact that the kernel $\g$ defines a positive compact operator on the Hilbert space $L^2(\X)$. In fact $ \En(f)=0$ if and only if $f$ is constant. 
 By \eqref{convexity}, this implies that the energy functional $\En$ is strictly convex on  $\M(\X)$ and that its unique minimizer is the volume density~$\omega$. 
Then, by Lemma~\ref{lem:entropy}, the free energy $\F_\gamma(\mu) = \frac\gamma2  \En(\mu) + \Hi(\mu|\mu_0)$ is also strictly convex, so that it has a unique minimizer $\mu_\gamma \in \M(\X)$.
 Since $\g$ is bounded from below and $\X$ is compact, we can consider the case where $V=0$
  in which case the equilibrium measure $\mu_\gamma = \omega$ for all $\gamma\ge 0$. 
Using the estimate \eqref{est:g}, it is immediate to verify that $\g$ satisfies the Assumptions~\ref{ass:1}.2)3). Hence, by Theorem~\ref{thm:LF}, for any $E\in\X$, the point process \eqref{def:Xi} converges as $N\to+\infty$ and  $N\beta \to \gamma$ to  homogeneous Poisson process on $\R^n$ with intensity 1. 
  
\subsection{Boundary fluctuations}
 
 In this section, we focus on the Euclidean case $\X = \R^n$ for $n\ge1$ and for simplicity we consider the potential   $V(x) = |x|^\alpha$ for a $\alpha>0$. 
 In this case, since the equilibrium measure $\mu_\gamma$ is radial and decays at $\infty$, the particles fill in a ball with a large radius and we can study the local fluctuations in the vicinity of the boundary of this ball for large $N$. Then, we typically expect to observe an inhomogeneous Poisson point process. 
 By adapting the proof of  Theorem~\ref{thm:LF}, we obtain  the following result for Riesz gases. 
 
 \begin{theorem}[Riesz gases -- boundary fluctuations] \label{thm:REF}
 Suppose that  $V(x) = |x|^\alpha$ for $x\in\R^n$ and  $\alpha>0$. For any $ \upsilon \in \mathbb{S}^{n-1}$, let
 $\psi \in SO(n)$ such that $\psi(e_1)=v$ and set 
 $\varphi_N(x) =  \eta_N\big(\upsilon + \alpha^{-1}\eta_N^{-\alpha}\psi(x) \big) $
 where
 \begin{equation} \label{def:eta1}
 \eta_N = (\log N)^{1/\alpha} \Big( 1 - \frac{n(\alpha-1)}{\alpha^2}  \frac{\log\log N}{\log N} - \frac{\log(\alpha^n \mathrm{L}_\gamma )}{\alpha \log N} \Big) 
 \end{equation}
  and $ \mathrm{L}_\gamma$ is as in \eqref{eq:mu}. 
 Then, the point process $\Xi_N = \sum_{j=1}^N  \boldsymbol\delta_{ \varphi_N^{-1}(x_j)}$  obtained by zooming around the point~$\eta_N\upsilon$  converges in distribution as $N\beta \to \gamma$ and $N\to+\infty$ to a Poisson point process with intensity $\theta(x) = e^{-e_1 \cdot x}$ on $\R^n$. 
 \end{theorem}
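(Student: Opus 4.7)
The plan is to follow the strategy of Theorem~\ref{thm:LF} and establish convergence of $\Xi_N$ to the target inhomogeneous Poisson point process by proving convergence of its Laplace functional $\E_N[e^{-\Xi_N(f)}]$ for all compactly supported continuous $f\ge0$, or equivalently convergence of factorial moment measures. Compared to the bulk case, the two new ingredients are an asymptotic analysis of $\mu_\gamma$ at infinity (producing the inhomogeneous intensity $\theta$) and verification that the normalization $\eta_N$ in \eqref{def:eta1} is dictated by the requirement that $N$ times the equilibrium density at $\varphi_N(y)$, times the Jacobian, converges to $\theta(y)$.

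I would first identify the limiting intensity. By Proposition~\ref{prop:eq}, $\mu_\gamma(x)=\mathrm{L}_\gamma^{-1}e^{-\gamma\U^{\mu_\gamma}(x)-|x|^\alpha}$, and since $\mu_\gamma$ is a probability measure whose tail is dominated by $e^{-|x|^\alpha}$ while $\g_\s(x,z)\to 0$ uniformly for $z$ in compacts as $|x|\to\infty$, one has $\U^{\mu_\gamma}(x)\to0$ at infinity. For $y$ in a fixed compact subset of $\R^n$, expanding $|\varphi_N(y)|^\alpha=\eta_N^\alpha\bigl|\upsilon+\alpha^{-1}\eta_N^{-\alpha}\psi(y)\bigr|^\alpha$ using $\psi^{\top}\upsilon=e_1$ and $|\upsilon|=1$ yields
\[
|\varphi_N(y)|^\alpha=\eta_N^\alpha+e_1\cdot y+O(\eta_N^{-\alpha}).
\]
Since $\psi\in SO(n)$, $|\det D\varphi_N|=\alpha^{-n}\eta_N^{n(1-\alpha)}$. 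A direct check shows that \eqref{def:eta1} is equivalent to $N\mathrm{L}_\gamma^{-1}\alpha^{-n}\eta_N^{n(1-\alpha)}e^{-\eta_N^\alpha}=1+o(1)$, giving
\[
N\,\mu_\gamma(\varphi_N(y))\,|\det D\varphi_N|\longrightarrow\theta(y)=e^{-e_1\cdot y}
\]
locally uniformly.

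Next, I would write the $k$-th factorial moment of $\Xi_N$ tested against $g^{\otimes k}$ by exchangeability as
\[
\int_{\R^{nk}}g^{\otimes k}(\mathbf{y})\,|\det D\varphi_N|^k\,\rho_N^{(k)}(\varphi_N(y_1),\ldots,\varphi_N(y_k))\,d\mathbf{y},
\]
with the Gibbs structure providing the DLR representation
\[
\rho_N^{(k)}(\mathbf{x})=\frac{N!}{(N-k)!}\,\frac{\Z_{N-k}^{(V_\mathbf{x})}}{\Z_N^{(V)}}\,\exp\!\Bigl(-\beta\!\!\!\sum_{1\le i<j\le k}\!\!\!\g_\s(x_i,x_j)\Bigr)\prod_{j=1}^k e^{-V(x_j)},
\]
where $V_\mathbf{x}(y):=V(y)+\beta\sum_{j=1}^k\g_\s(y,x_j)$. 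When $\mathbf{x}=(\varphi_N(y_1),\ldots,\varphi_N(y_k))$ lies in the boundary region, the perturbation $\beta\g_\s(\cdot,x_j)$ has $L^p$-norm of order $\beta\asymp N^{-1}$ on compacts (via \eqref{intcondition}), and the $x_j$ sit far from the bulk of $\mu_\gamma$. Hence the perturbed Gibbs measure still satisfies the hypotheses of Proposition~\ref{prop:eq}, its empirical measure concentrates on $\mu_\gamma$, and the associated Laplace asymptotics (together with those for $\Z_N^{(V)}$) give
\[
\frac{\Z_{N-k}^{(V_\mathbf{x})}}{\Z_N^{(V)}}\prod_{j=1}^k e^{-V(x_j)}\sim\prod_{j=1}^k\frac{\mu_\gamma(x_j)}{N}.
\]
The cross term $\exp(-\beta\sum_{i<j}\g_\s(x_i,x_j))\to1$ since $\beta\to0$ and $|x_i-x_j|=\alpha^{-1}\eta_N^{1-\alpha}|y_i-y_j|$ is bounded away from $0$ for $y_i\neq y_j$. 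Combined with the intensity computation, this produces the locally uniform limit $|\det D\varphi_N|^k\,\rho_N^{(k)}(\varphi_N(\mathbf{y}))\to\prod_j\theta(y_j)$.

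Finally, using the inclusion--exclusion expansion
\[
\E_N\bigl[e^{-\Xi_N(f)}\bigr]=\sum_{k\ge0}\frac{(-1)^k}{k!}\int_{\R^{nk}}(1-e^{-f})^{\otimes k}(\mathbf{y})\,|\det D\varphi_N|^k\,\rho_N^{(k)}(\varphi_N(\mathbf{y}))\,d\mathbf{y},
\]
term-by-term convergence combined with uniform summability will yield $\E_N[e^{-\Xi_N(f)}]\to\exp\bigl(-\int(1-e^{-f})\theta\bigr)$, the Laplace functional of the desired Poisson process. The hard part will be producing a uniform dominating bound of the form $|\det D\varphi_N|^k\rho_N^{(k)}(\varphi_N(\mathbf{y}))\le C\prod_j\theta(y_j)$ on the support of $f^{\otimes k}$, together with tail control of $\rho_N^{(k)}$ ensuring summability in $k$; this requires non-asymptotic control of the partition-function ratio $\Z_{N-k}^{(V_\mathbf{x})}/\Z_N^{(V)}$, obtainable from the trivial upper bound~\eqref{ZUB} applied to the perturbed potential and the a priori tail estimate~\eqref{muk:tail} for $\mu_\gamma$, plus the uniform smallness of $\gamma\U^{\mu_\gamma}(\varphi_N(y))$ for $y$ in compacts.
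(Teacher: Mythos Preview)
Your strategy is the same as the paper's: reduce convergence of $\Xi_N$ to convergence of the rescaled correlation functions $|\det D\varphi_N|^k\rho_N^{(k)}(\varphi_N(\mathbf y))$ together with a uniform bound, then invoke Lemma~\ref{lem:ppwcvg}. The paper organizes this by first proving a general result (Theorem~\ref{thm:limitcase}) under Assumptions~\ref{ass:3}, so that the proof of Theorem~\ref{thm:REF} reduces to checking those assumptions for $V(x)=|x|^\alpha$ and $E_N=\eta_N\upsilon$. Two points where your justification departs from the paper and is less solid:

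\smallskip
\textbf{(1) The ``Laplace asymptotics'' for the partition function ratio.} You invoke Proposition~\ref{prop:eq} and unspecified Laplace asymptotics to obtain $\frac{\Z_{N-k}^{(V_{\mathbf x})}}{\Z_N^{(V)}}\prod_j e^{-V(x_j)}\sim\prod_j \mu_\gamma(x_j)/N$. The LDP only gives $\Z_N^{(V)}=e^{-N(\inf\F_\gamma+o(1))}$, which is far too crude to extract a ratio converging to $\mathrm L_\gamma^{-k}$; and Proposition~\ref{prop:eq} is stated for a fixed potential, not an $N$-dependent $V_{\mathbf x}$. The paper instead writes $\Z_{N-k}^{(V_{\mathbf x})}=\Z_{N-k}^{(V)}\,\E_{N-k}\big[e^{-\beta(N-k)\sum_i\mathrm X_{N-k}(\varphi_N(y_i))}\big]$ and treats the two factors separately: $\Z_{N-k}^{(V)}/\Z_N^{(V)}\to\mathrm L_\gamma^{-k}$ is Corollary~\ref{cor:ratio} (already available from the bulk analysis), and the expectation tends to $1$ by Lemma~\ref{lem:lim0}, which shows $\mathrm X_N(\varphi_N(y))\to0$ in $L^1(\P_N)$ directly from the Wegner estimate (Proposition~\ref{prop:Wegner}) and the pointwise decay $\g_\s(\varphi_N,u)\to0$. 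No LLN for a perturbed Gibbs measure is needed.

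\smallskip
\textbf{(2) The uniform bound is easy, not ``the hard part''.} Since $\g_\s\ge0$ one has $\mathrm X_{N-k}\ge0$, hence $\E_{N-k}\big[e^{-\beta(N-k)\sum_i\mathrm X_{N-k}(\varphi_N(y_i))}\big]\le1$. Combined with the locally uniform convergence in \eqref{limit7} and Corollary~\ref{cor:ratio}, this immediately gives $R_N^{(k)}(\mathbf y)\le C^k$ on compacts. No non-asymptotic control of perturbed partition functions is required.

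\smallskip
One small slip: for $\alpha>1$ you have $|x_i-x_j|=\alpha^{-1}\eta_N^{1-\alpha}|y_i-y_j|\to0$, not bounded away from $0$. The correct argument (used in the paper via Assumption~\ref{ass:1}.2)) is $\beta\,\g_\s(x_i,x_j)\ll N^{-1}|x_i-x_j|^{-n}\asymp(\alpha_N^n/N)|y_i-y_j|^{-n}\to0$, since the first condition in \eqref{c1} forces $\alpha_N^n\ll N$.
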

 
 Theorem~\ref{thm:REF} follows from a result, Theorem~\ref{thm:limitcase}, given in Section~\ref{sect:edge} which can be applied to any non--negative interaction kernel which decays at $\infty$ and to a general potential~$V$. 
 However, since the assumptions of Theorem~\ref{thm:limitcase} are technical and depend strongly on the growth of $V$ at $\infty$, we have decided to focus on a concrete example in this section. 
 It is should be noted that the difference with the case of independent particles $(\gamma=0)$ come only from the last term in the definition of the \emph{radius} \eqref{def:eta1} where the constant $\mathrm{L}_\gamma$ from the self--consistent equation for the equilibrium density appear instead of $\mathrm{L}_0$. 
 This is a consequence of the fact that the Riesz kernel \eqref{Riesz} decays to zero at $\infty$. 
For the log kernel \eqref{def:log} in dimension $n=1,2$, we obtain an analogous result but the \emph{radius} $\eta_N$ of the ball where the particles are \emph{confined} for large $N$ depends on the temperature $\beta$.

  \begin{theorem}[Log gases -- boundary fluctuations] \label{thm:LEF}
 Suppose that  $V(x) = |x|^\alpha$ for $x\in\R^n$ and $\alpha>0$. For any $ \upsilon \in \mathbb{S}^{n-1}$, let
 $\psi \in SO(n)$ such that $\psi(e_1)=v$ and set 
 $\varphi_N(x) =  \eta_N\big(\upsilon + \alpha^{-1}\eta_N^{-\alpha}\psi(x) \big) $
 where
 \begin{equation} \label{def:eta2}
 \eta_N = (\log N)^{1/\alpha} \Big( 1 + \frac{ \beta N  - n(\alpha-1)}{\alpha^2}  \frac{\log\log N}{\log N} - \frac{\log(\alpha^n \mathrm{L}_\gamma )}{\alpha \log N} \Big)
 \end{equation}
 and $ \mathrm{L}_\gamma$ is as in \eqref{eq:mu}. 
 Then, the point process $\Xi_N = \sum_{j=1}^N  \boldsymbol\delta_{ \varphi_N^{-1}(x_j)}$  obtained by zooming around the point~$\eta_N\upsilon$  converges in distribution as $N\beta \to \gamma$ and $N\to+\infty$  to a Poisson point process with intensity $\theta(x) = e^{-e_1 \cdot x}$ on $\R^n$. 
 \end{theorem}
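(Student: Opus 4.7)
The plan is to follow the strategy used for the Riesz case (Theorem~\ref{thm:REF}), i.e.\ to apply the general edge-convergence mechanism Theorem~\ref{thm:limitcase}, with two modifications specific to the log gas. First, since $\g$ is signed, one rewrites the Hamiltonian as in \eqref{H2} with $\vartheta(x)=\log(1+|x|)$; the effective potential becomes $\widetilde V_{\beta(N-1)}(x)=|x|^\alpha-\beta(N-1)\log(1+|x|)$, and the fact that it depends on $\beta N$ is precisely the origin of the extra $\beta N/\alpha^2$ term in \eqref{def:eta2}. Second, existence and uniqueness of $\mu_\gamma$ are guaranteed by Corollary~\ref{cor:LBF} together with Lemma~\ref{lem:Lminimizer} only in dimensions $n=1,2$, to which the statement is implicitly restricted.

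The main computation concerns the tail of the one-point intensity. Since $V$ is super-linear, $\mu_\gamma$ has rapidly decaying tails and, as $\mu_\gamma$ is a probability measure on $\R^n$, one has $\U^{\mu_\gamma}(x)=-\log|x|+\O(|x|^{-1})$ as $|x|\to\infty$. Substituting into \eqref{eq:mu} yields
\begin{equation*}
\mu_\gamma(x) \sim \mathrm{L}_\gamma^{-1} |x|^{\gamma} e^{-|x|^\alpha},
\end{equation*}
and the analogous finite-$N$ estimate (with $\gamma$ replaced by $\beta(N-1)$) controls the one-point density of $\P_N$ near the edge. For $y=\varphi_N(x)$ with $x$ in a bounded window, the orthogonality of $\psi$ and the identity $\psi(e_1)=\upsilon$ give
\begin{equation*}
|y|^\alpha = \eta_N^\alpha + e_1\cdot x + o(1), \qquad |y|^{\beta(N-1)} = \eta_N^{\beta N} (1+o(1)),
\end{equation*}
the latter because $\beta N\, \eta_N^{-\alpha}=\O(1/\log N)$. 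Multiplied by the Jacobian $|\det \mathrm{D}\varphi_N(x)| = \alpha^{-n} \eta_N^{n(1-\alpha)}$, the expected intensity of $\Xi_N$ at $x$ converges to
\begin{equation*}
\mathrm{L}_\gamma^{-1}\alpha^{-n} N \eta_N^{\beta N + n(1-\alpha)} e^{-\eta_N^\alpha}\, e^{-e_1\cdot x},
\end{equation*}
and a direct expansion shows that \eqref{def:eta2} is precisely the choice making the prefactor tend to $1$: the coefficient of $\log\log N/\log N$ in $\eta_N$ cancels the $\alpha^{-1}(\beta N+n(1-\alpha))\log\log N$ produced by $\eta_N^{\beta N + n(1-\alpha)}$ against the corresponding term from $\eta_N^\alpha$, and the $1/\log N$ coefficient absorbs $-\log(\alpha^n \mathrm{L}_\gamma)$.

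Lifting this first-moment computation to weak convergence to a Poisson point process is identical to the Riesz case and is the content of Theorem~\ref{thm:limitcase}: in the mean-field regime the factorial moments of $\Xi_N$ factorize asymptotically because the total pair interaction felt by an edge particle is essentially deterministic by the law of large numbers \eqref{LLN}, and the singular short-range part of $\widetilde\g$ is negligible on a window whose expected particle count is $\O(1)$. I expect the main obstacle to be obtaining the uniform tail expansion for $\U^{\mu_\gamma}$ (and for the finite-$N$ one-point density) on the scale $|x|\asymp\eta_N$ with an error that survives multiplication by $\beta(N-1)\asymp\gamma$; the available $\O(|x|^{-1})$ is barely enough, and this is precisely where the non-decay of the log kernel at infinity makes the analysis more delicate than in the Riesz case.
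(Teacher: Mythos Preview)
Your first-moment calibration is correct and matches the paper's condition \eqref{c4}: the choice of $\eta_N$ in \eqref{def:eta2} is exactly what makes $N e^{-\eta_N^\alpha + \beta N \log \eta_N}/\big(\mathrm{L}_\gamma (\alpha\eta_N^{\alpha-1})^n\big) \to 1$. However, there is a genuine gap in the final step. You invoke Theorem~\ref{thm:limitcase} to lift the first-moment computation to Poisson convergence, but that theorem's Assumptions~\ref{ass:3} require $\g\ge 0$ with $\g_k(x,u)\to 0$ as $|x|\to\infty$; the log kernel satisfies neither, and passing to $\widetilde\g(u,x)=\g(u,x)+\vartheta(u)+\vartheta(x)$ does not help since $\widetilde\g$ grows at infinity. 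Concretely, Lemma~\ref{lem:lim0} (the heart of the proof of Theorem~\ref{thm:limitcase}) asserts $\mathrm{X}_N(\varphi_N)\to 0$ in $L^1$, which is false for the log interaction; and the uniform bound on the correlation functions, which in the Riesz case comes for free from $\mathrm{X}_N\ge 0$, is no longer automatic.

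The paper does not try to force the log case into Theorem~\ref{thm:limitcase}; it proves a separate general result, Theorem~\ref{thm:logedge}, and the proof of Theorem~\ref{thm:LEF} is then a two-line verification of its hypotheses \eqref{c3}, \eqref{c4}, \eqref{c2} for $V(x)=|x|^\alpha$. The replacement for Lemma~\ref{lem:lim0} is Lemma~\ref{lem:lim1}, which shows $\mathrm{X}_N(\varphi_N)+\log|E_N|\to 0$ in $L^1(\P_N)$; this is the empirical-measure analog of your observation that $\U^{\mu_\gamma}(x)+\log|x|\to 0$, but the paper works directly with the random potential via the Wegner estimate (Proposition~\ref{prop:Wegner}) rather than through a tail expansion of $\U^{\mu_\gamma}$. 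The uniform integrability of $\exp\big(-\beta(N-k)\sum_i(\mathrm{X}_{N-k}(\varphi_N(x_i))+\log|E_N|)\big)$, which you do not address and which is not free since the exponent can be positive, is supplied by the bound \eqref{UB2}. So your identification of the ``main obstacle'' is slightly off: the delicate point is not a sharp tail expansion of $\U^{\mu_\gamma}$ but controlling the empirical potential and its exponential moments at the moving edge.
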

 
 The proof of Theorem~\ref{thm:LEF} is also given in Section~\ref{sect:edge} (see Theorem~\ref{thm:logedge}).
Let us observe that since 
 $\varphi_N(x) =  \eta_N\upsilon + \alpha^{-1}  (\log N)^{1/\alpha-1}\psi(x)\big(1+\O(\frac{\log\log N}{\log N}) \big)$, it is not possible to replace $\beta N$ by $\gamma$ on the RHS of \eqref{def:eta2} without information on the rate of convergence as $N\beta \to \gamma$.

 \medskip

Theorem~\ref{thm:LEF} with $n=1$ and $\alpha=2$ describes the fluctuations for Gaussian $\beta$--ensemble  \eqref{GbE} near the edges in the regime  $\beta N \to \gamma$ with $\gamma \ge 0$ as $N\to+\infty$. 
In particular observe that for any $t\in\R$,
\[ \begin{aligned}
\P_N\big[ \Xi_N(t,+\infty) =0 \big]  
&= \P_N\big[\text{no particle }> \eta_N + \tfrac{t}{2\eta_N} \big]  \\
&= \P_N\big[ \max_{j=1,\dots, N} x_j \le  \eta_N + \tfrac{t}{2\eta_N} \big] . 
\end{aligned}\]
Since the random variable $\Xi_N(t,+\infty)$ converges to a Poisson random variable with mean $\lambda(t) = \int_t^{+\infty} e^{-x} dx$, this implies that for any $t\in\R$, 
\[
\lim_{N\to+\infty} \P_N\big[ \max_{j=1,\dots, N} x_j \le  \eta_N + \tfrac{t}{2\eta_N} \big] 
= \exp\big(-\lambda(t)\big) = \exp(-e^{-t}) . 
\]
This shows that in the high temperature regime, the largest eigenvalue of the  Gaussian $\beta$--ensemble suitably normalized converges to a Gumbel random variable.

 \begin{corollary}[Edge fluctuations for Gaussian $\beta$--ensemble] \label{thm:Gumbel}
 Consider the ensemble \eqref{GbE} with $V(x) =x^2$ in the regime where $\beta N \to \gamma$ with $\gamma \ge 0$ as $N\to+\infty$ and let 
\[ 
\xi_N := 2 \Big( \sqrt{\log N}  \max_{j=1,\dots, N} x_j -  \log N \Big) - \tfrac{ \beta N -1}{2} \log\log N + \log(2\mathrm{L}_\gamma) 
\] 
where $\mathrm{L}_\gamma>0$ is as in formula \eqref{eq:mu}. 
Then the random variable $\xi_N$ converges in distribution  to a standard Gumbel random variable $($with distribution function $\exp(-e^{-t})$ on $\R)$.
 \end{corollary}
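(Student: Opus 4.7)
The Corollary is a specialization of Theorem~\ref{thm:LEF} to $n=1$, $\alpha=2$ and direction $\upsilon=1$, combined with a routine reconciliation of scalings. With those parameters $\psi = \mathrm{id}$, the chart reduces to $\varphi_N(x) = \eta_N + x/(2\eta_N)$ with inverse $y \mapsto 2\eta_N(y-\eta_N)$, and \eqref{def:eta2} coincides exactly with the $\eta_N$ given in the Corollary. Theorem~\ref{thm:LEF} therefore asserts that
\[
\Xi_N = \sum_{j=1}^N \boldsymbol{\delta}_{2\eta_N(x_j - \eta_N)}
\]
converges in distribution to a Poisson process on $\R$ with intensity $e^{-x}$.

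Writing $M_N := \max_j x_j$ and $\zeta_N := 2\eta_N(M_N - \eta_N)$, the events $\{\zeta_N \le t\}$ and $\{\Xi_N((t,+\infty)) = 0\}$ coincide for every $t \in \R$. Integrability of $e^{-x}$ on $(t,+\infty)$ lets the Poisson limit extend to this unbounded void probability, giving
\[
\P_N[\zeta_N \le t] \longrightarrow \exp\Big(-\int_t^{+\infty} e^{-x}\,dx\Big) = \exp(-e^{-t}),
\]
so $\zeta_N$ converges in distribution to a standard Gumbel; in particular $M_N - \eta_N = \O_P((\log N)^{-1/2})$. It remains to show $\xi_N - \zeta_N \to 0$ in probability, after which Slutsky's theorem concludes. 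Expanding $\eta_N$ directly yields the exact identity
\[
2\sqrt{\log N}\,\eta_N - 2\log N = \frac{\beta N - 1}{2}\log\log N - \log(2\mathrm{L}_\gamma),
\]
and $|\sqrt{\log N}-\eta_N| = \O(\log\log N/\sqrt{\log N})$. Combined with the previous tightness bound, the cross-term $2(M_N - \eta_N)(\sqrt{\log N} - \eta_N)$ is $o_P(1)$, so $2M_N(\sqrt{\log N}-\eta_N) = 2\sqrt{\log N}\,\eta_N - 2\eta_N^2 + o_P(1)$. Substituting,
\[
\xi_N - \zeta_N = 2M_N(\sqrt{\log N}-\eta_N) + 2\eta_N^2 - 2\log N - \frac{\beta N - 1}{2}\log\log N + \log(2\mathrm{L}_\gamma) = o_P(1).
\]

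The only real obstacle is the arithmetic bookkeeping in the last step: the subleading $\log\log N$ correction in $\eta_N$ is calibrated precisely so that the expansion of $2\sqrt{\log N}\,\eta_N$ reproduces both the $\frac{\beta N - 1}{2}\log\log N$ centering and the $\log(2\mathrm{L}_\gamma)$ shift appearing in the definition of $\xi_N$. A minor secondary point is to extend the point-process convergence of Theorem~\ref{thm:LEF}, which is a priori stated over bounded reference windows, to the half-line $(t,+\infty)$; this is routine given the integrability of the limiting intensity and the centering of $\Xi_N$ by $\eta_N$.
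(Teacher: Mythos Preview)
Your proposal is correct and follows essentially the same approach as the paper: specialize Theorem~\ref{thm:LEF} to $n=1$, $\alpha=2$, $\upsilon=1$, identify $\{\max_j x_j \le \eta_N + t/(2\eta_N)\}$ with the void event $\{\Xi_N(t,+\infty)=0\}$, and read off the Gumbel limit from the Poisson convergence. The paper's own argument (the paragraph immediately preceding the Corollary) stops at the convergence of $\zeta_N = 2\eta_N(M_N-\eta_N)$ and leaves the passage to the stated $\xi_N$ implicit; your Slutsky step making $\xi_N-\zeta_N = o_P(1)$ explicit is a welcome addition, and your ``exact identity'' $2\sqrt{\log N}\,\eta_N - 2\log N = \tfrac{\beta N-1}{2}\log\log N - \log(2\mathrm{L}_\gamma)$ is indeed exact and is the right bookkeeping device.
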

 
 The question of the fluctuations at the edge of the Gaussian $\beta$--ensemble in the high temperature regime has been raised in \cite{AD14a}.
 Then, it was proved in \cite[Theorem 1.1]{Pakzad18} that if $\beta N \ll \frac{1}{\log N}$,  then $\xi_N$ converges to a Gumbel random variable. 
Corollary~\ref{thm:Gumbel} gives an extension of this result in the regime where $\beta N \sim \gamma$.
Finally, let us mention that in the regime where $\beta\to0$ and $\beta N\to+\infty$, using the tridiagonal random matrix representation of the Gaussian $\beta$--ensemble, Pakzad obtained in \cite[Theorem 1.1]{Pakzad19} a large deviations principle at speed $\beta N$ for the largest eigenvalue. 
This implies that in this regime, the largest eigenvalue to $\sqrt{2}$ in probability. 
However, we still expect to observe that the largest eigenvalues has Gumbel fluctuations around  $\sqrt{2}$ -- see the discussions in~\cite{AD14a}.  
This is in contrast with the regime where $\beta>0$ is fixed and the fluctuations are described in terms of the stochastic Airy operator  \cite{RRV11}.  
 
 \paragraph{Acknowledgments.}
G.L. is supported by the SNSF Ambizione grant S-71114-05-01. G.L. thanks Trinh Khanh Duy for interesting discussions about the problem studied in this article.

\section{Proof of Theorem~\ref{thm:LF}} \label{sect:LF}

Throughout this section, we assume that $\beta N\to \gamma$ as $N\to+\infty$ for $\gamma \ge 0$ and that the interaction kernel $\g$ and the potential $V$ satisfy the Assumptions~\ref{ass:2}.
Without loss of generality, we also assume that $\beta N\le \kappa$ for all $N\in\N$ for a fixed $\kappa>0$ and that the potential $\widetilde{V}_\kappa \ge 0$. 
Moreover, we rely only on the fact that the empirical measure $\widehat{\mu}_N$ converges in probability to $\mu_\gamma \in \M(\X)$. 

\medskip

The first step of the proof of Theorem \ref{thm:LF} consists in obtaining a (uniform) bound for the density of state  $\rho_N$.
As, we see in Section~\ref{sect:density}, this is a rather straightforward consequence of the Assumptions~\ref{ass:2}. 
Then, in Section~\ref{sect:correlation2}, we use this bound to deduce the convergence of the correlation functions of the local point process \eqref{def:Xi} and complete the proof of Theorem~\ref{thm:LF}. The argument to obtain this convergence is inspired from  \cite{BP15, NT}. 

\subsection{Estimates for the density of state} \label{sect:density}

We define the density of states (or first marginal of $\P_N$) by
\[
 \rho_N(u)  := \int \frac{e^{-\Ha_N(u,x_{2} ,\dots, x_N)}}{\Z_N^{(V)}} \omega(\d x_{2}) \cdots \omega(\d x_N) , \qquad u\in\X . 
\]
For any $N\in\N$, $\rho_N$ is a probability density function on $\X$. 

\medskip

First, let us observe that by Jensen's inequality, we immediately have a lower bound for the partition function \eqref{def:Z}. Indeed, by \eqref{F3}, the reference measure $\d\mu_0 = \mathrm{L}_0^{-1} e^{- V} \d\omega$ satisfies $F_\kappa(\mu_0) \ge \frac\kappa2 \En(\mu_0)$  and this implies that
\begin{equation*} \label{Z}
\begin{aligned}
\Z_N^{(V)}
& = \mathrm{L}_0^N \int e^{ - \beta \sum_{ i < j } \g(x_i-x_j) } \prod_{i=1}^N \mu_0(\d x_i) \\
& \ge  \mathrm{L}_0^N \exp\bigg( - \beta \int   \sum_{ i < j } \g(x_i-x_j)   \prod_{i=1}^N \mu_0(\d x_i) \bigg)  \\
& =  \mathrm{L}_0^N \exp\big( - N\tfrac{\beta(N-1)}{2}  \En(\mu_0) \Big)
\end{aligned}
\end{equation*}
Since $F_\kappa(\mu_0) \ge 0$, this shows that if $\beta (N-1) \le \kappa$, 
\begin{equation} \label{ZLB}
\Z_N^{(V)} \ge  \mathrm{L}_0^N e^{- N \F_\kappa(\mu_0) } . 
\end{equation}
 
Moreover, if $f\in L^1(\nu_\kappa)$ is a non--negative function, then by \eqref{def:vartheta} and \eqref{def:mu0},  we obtain the trivial estimate
\begin{equation} \label{intest}
\E_N\bigg[ \int f  \d\emp_N \bigg] \le C(N,\beta) \bigg( \int f  \d\nu_\kappa\bigg)^N . 
\end{equation}

\begin{proposition}[Wegner estimate] \label{prop:Wegner}
Let  $\widetilde{V}(x) = V(x) - \beta(N-1) \vartheta(x)$ for $x\in\X$. 
For any $N\in\N$ and for all $u\in\X$,
\[
\rho_N(u)  \ll e^{-\widetilde{V}(u)} 
\]
where the implied constant depends only on $\kappa>0$. 
\end{proposition}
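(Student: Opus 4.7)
The plan is to first express $\rho_N(u)$ as a ratio of partition functions, peel off the factor $e^{-\widetilde V(u)}$ via the pointwise lower bound on the kernel, and then show that the remaining ratio of partition functions is bounded uniformly in $N$. For the first step, fixing $x_1=u$ and integrating out the remaining coordinates yields the identity
\[
\rho_N(u) \;=\; \frac{e^{-V(u)}\,\Z_{N-1}^{(V^u)}}{\Z_N^{(V)}},
\qquad V^u(x) := V(x) + \beta\g(u,x),
\]
where $\Z_{N-1}^{(V^u)}$ denotes the $(N-1)$--particle partition function for kernel $\g$, coupling $\beta$ and potential $V^u$. Applying $\g(u,x)\ge -\vartheta(u)-\vartheta(x)$ from~\eqref{def:vartheta} to each of the $N-1$ factors $e^{-\beta\g(u,x_j)}$ in the integrand of $\Z_{N-1}^{(V^u)}$ gives $\Z_{N-1}^{(V^u)}\le e^{(N-1)\beta\vartheta(u)}\Z_{N-1}^{(V-\beta\vartheta)}$, so that the proposition reduces to proving that $\Z_{N-1}^{(V-\beta\vartheta)}/\Z_N^{(V)}$ is bounded by a constant depending only on $\kappa$.

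To handle this ratio, I would pass to the tilted variables $(\widetilde\g,\widetilde V)$ of~\eqref{H2}, in which $\widetilde\g\ge 0$ and the $N$--particle Hamiltonian is unchanged. A short direct computation then shows that $\Z_{N-1}^{(V-\beta\vartheta)}$ coincides with the $(N-1)$--particle partition function in the tilted formulation with the same potential $\widetilde V$; integrating out one coordinate of $\Z_N^{(V)}$ in those variables produces the factorisation
\[
\Z_N^{(V)} \;=\; \Z_{N-1}^{(V-\beta\vartheta)}\cdot\widetilde{\E}_{N-1}\!\bigg[\int e^{-\widetilde V(v)-\beta\sum_{j}\widetilde\g(v,x_j)}\,\omega(\d v)\bigg],
\]
where $\widetilde{\E}_{N-1}$ is the expectation under the Gibbs measure for $N-1$ particles with interaction $\widetilde\g$ and potential $\widetilde V$. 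The target estimate is now equivalent to a uniform lower bound on this Gibbs expectation.

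The hard part will be this last lower bound: the crude Jensen inequality~\eqref{ZLB} applied to $\Z_N^{(V)}$ overshoots by an exponential factor in $N$ and is not sharp enough. My plan is to apply Jensen's inequality twice---first to the inner $\omega$--integral against the probability measure $\widetilde{\mathrm C}^{-1}e^{-\widetilde V}\omega$, yielding the lower bound $\widetilde{\mathrm C}\exp\!\big(-\beta\sum_j \widetilde U(x_j)\big)$ with
$\widetilde U(x) := \widetilde{\mathrm C}^{-1}\int\widetilde\g(v,x)\,e^{-\widetilde V(v)}\omega(\d v)$,
and then to the outer Gibbs expectation to get $\widetilde{\mathrm C}\exp\!\big(-\beta(N-1)\int\widetilde U\,\widetilde\rho_{N-1}\,\omega\big)$, where $\widetilde\rho_{N-1}$ is the density of state for the $(N-1)$--particle tilted Gibbs measure. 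Since $\beta(N-1)\le\kappa$ and $\widetilde{\mathrm C}\ge\mathrm L_0$, everything reduces to controlling $\int\widetilde U\,\widetilde\rho_{N-1}\,\omega$ uniformly in $N$. I would close this by induction on $N$: applying the Wegner estimate at level $N-1$ to the Gibbs measure with potential $V-\beta\vartheta$ (whose associated effective potential is again $\widetilde V$) gives $\widetilde\rho_{N-1}\ll e^{-\widetilde V_\kappa}$, after which the local integrability of $\widetilde\g$ from Assumption~\ref{ass:1}.3) bounds the integral by a constant depending only on $\kappa$; the base case $N=1$ is the trivial identity $\rho_1(u)=e^{-V(u)}/\mathrm{L}_0$. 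Verifying that the inductive constant can be chosen consistently (so that the closed-loop Jensen estimate propagates) is the delicate point in this argument.
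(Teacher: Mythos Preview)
Your setup is essentially the paper's: write $\rho_N(u)$ via a ratio of partition functions, peel off $e^{-\widetilde V(u)}$ using~\eqref{def:vartheta}, and reduce to a uniform bound on $\Z_{N-1}^{(V-\beta\vartheta)}/\Z_N^{(V)}$. The factorisation in tilted variables and the double Jensen are also correct. The difficulty you flag at the end, however, is a genuine gap and your proposed induction does not close.

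After the two Jensen steps you need $\displaystyle\int\widetilde U\,\widetilde\rho_{N-1}\,\omega$ bounded by a constant depending only on~$\kappa$. Splitting $\widetilde\g(v,x)=\g(v,x)+\vartheta(v)+\vartheta(x)$, the $\g$–part and the $\vartheta(v)$–part are indeed uniformly bounded (H\"older plus Assumption~\ref{ass:1}.3), integrating the probability density $\widetilde\rho_{N-1}$). But the $\vartheta(x)$–part leaves you with $\widetilde\E_{N-1}\!\big[\vartheta(x_1)\big]=\int\vartheta\,\widetilde\rho_{N-1}\,\omega$, and this is exactly where you invoke the induction hypothesis $\widetilde\rho_{N-1}\le K_{N-1}e^{-\widetilde V_\kappa}$. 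Feeding that in gives a recursion of the form $K_N\le A\,e^{B K_{N-1}}$ with $A,B$ depending only on $\kappa$; this blows up doubly exponentially and cannot produce a uniform constant except in the lucky regime $AB\le e^{-1}$, which you have no way to enforce.

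The paper sidesteps the circularity by \emph{not} tilting the reference measure in the Jensen step. It writes $\Z_N^{(V)}/\Z_{N-1}^{(V)}=\mathrm L_0\int\E_{N-1}\big[e^{-\beta(N-1)\int\g(u,\cdot)\,\d\widehat\mu_{N-1}}\big]\mu_0(\d u)$ and applies Jensen against~$\mu_0$, so that the quantity to control is $\E_{N-1}\!\big[\int\U^{\mu_0}\d\widehat\mu_{N-1}\big]=\int\U^{\mu_0}\rho_{N-1}\,\omega$. The point is that $\U^{\mu_0}(x)=\int\g(x,u)\mu_0(\d u)$ is bounded \emph{above by a constant uniformly in $x$} (H\"older in~$u$ with Assumption~\ref{ass:1}.3) and $e^{-V}\in L^q$), so integrating against the probability density $\rho_{N-1}$ costs nothing and no induction is needed. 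The remaining $\vartheta$–contribution, which in your notation is the factor $\Z_{N-1}^{(V-\beta\vartheta)}/\Z_{N-1}^{(V)}=\E_{N-1}\!\big[e^{\beta\sum_j\vartheta(x_j)}\big]$, is then bounded separately and directly by Lyapunov's inequality and the crude partition-function bounds \eqref{ZUB}--\eqref{ZLB}, giving $\E_{N}\!\big[e^{\beta r\sum_j\vartheta(x_j)}\big]\le C(\kappa)^r$. The upshot: keep the Jensen step in the \emph{untilted} variables so the resulting potential is pointwise bounded, and treat the exponential $\vartheta$–moment by a global (not inductive) partition-function estimate.
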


\begin{proof}
We can suppose that $N\ge 2$, otherwise the estimate is trivial. 
By definitions, we can rewrite
\begin{equation} \label{Zratio}
\Z_N^{(V)}   =  \Z_{N-1}^{(V)}  \mathrm{L}_0 \int  \E_{N-1} \big[  e^{-\beta (N-1) \int \g(u, \cdot) \d\emp_{N-1} }\big] \mu_0(\d u)
\end{equation}
and for all $u\in\X$, 
\begin{equation} \label{rho1}
\rho_N(u) =  \frac{  \Z_{N-1}^{(V)} }{ \Z_{N}^{(V)}}   \E_{N-1} \big[  e^{-\beta \sum_{i=1}^{N-1} \g(u, x_i)}\big] e^{-V(u)}  . 
\end{equation}
Let us check that the random variable $\int \g(u, \cdot) \d\emp_{N-1}$ is integrable with respect to the probability measure $\P_{N-1} \times \mu_0$. 
On the one--hand,  it follows from the Assumptions~\ref{ass:1}.3) that 
 \begin{align} 
\int \E_{N-1}\bigg[ \int \g_+(u, \cdot) \d\emp_{N-1} \bigg] \mu_0(\d u) 
&\notag \le 1 +  \iint \g^1(u,x) \mu_0(\d u) \rho_N(\d x) \\
&\notag \le 1+  \mathrm{C}_0^{-1} \bigg(  \int  \g^1(u,x)^p \omega(\d x) \bigg)^{1/p}   \bigg(  \int e^{-V(x)} \omega(\d x) \bigg)^{1/q} \\
& \label{estimate0} \ll 1
\end{align}
by H\"older's inequality and using that $V\ge 0$.  On the other hand by \eqref{def:vartheta} and \eqref{intest}, we have
\[ \begin{aligned} 
\int \E_{N-1}\bigg[ \int \g_-(u, \cdot) \d\emp_{N-1} \bigg] \mu_0(\d u) 
&\le  \int \vartheta(u) \mu_0(\d u)  +  \E_N\bigg[ \int \vartheta  \d\emp_N \bigg]   <+\infty 
\end{aligned} \]
where we used that $ \displaystyle \int \vartheta(u) (\d u)  <+\infty $ since $\widetilde{V}_\kappa$ satisfies the 
Assumptions~\ref{ass:1}.1) for any $\kappa \ge 0$. 
Hence, by applying Jensen's inequality, it follows from \eqref{Zratio} that 
\[ \begin{aligned}
\frac{ \Z_{N}^{(V)}}{ \Z_{N-1}^{(V)}} 
& \ge  \mathrm{L}_0 \exp\bigg( - \beta (N-1)  \E_{N-1}\bigg[ \int \U^{\mu_0} \d\emp_{N-1}\bigg]   \bigg)    .
\end{aligned}\] 
Note that we used the symmetry of the kernel $\g$. 
Moreover, it follows from the estimate \eqref{estimate0} that $\E_{N-1}\bigg[ \int \U^{\mu_0} \d\emp_{N-1}\bigg]  \le C$ for a universal constant $C>0$. 
This implies that if $\beta (N-1) \le \kappa$, then
$\frac{\Z_{N-1}^\beta}{\Z_{N}^\beta}   \ll 1$. 
By \eqref{rho1} and   \eqref{def:vartheta}, this shows that the density of states satisfies
\begin{equation} \label{UB1}
\begin{aligned}
\rho_N(u)  & \ll   \E_{N-1} \big[  e^{-\beta \sum_{j=1}^{N-1} \g(u ,x_j)}\big] e^{-V(u)}   \\
& \le    \E_{N-1} \big[  e^{\beta \sum_{j=1}^{N-1} \vartheta(x_j)}\big] e^{-\widetilde{V}(u)}    , 
\end{aligned}
\end{equation}
where $\widetilde{V}(u) = V(u) - \beta(N-1) \vartheta(u)$.
Finally, by  Jensen's inequality again,  it holds for any $N\ge r \ge1$, 
\[
 \E_{N} \big[  e^{\beta r \sum_{j=1}^{N} \vartheta(x_j)}\big]
\le  \Big(   \E_{N} \big[  e^{\kappa \sum_{j=1}^{N} \vartheta(x_j)}\big] \big)^{\frac{r}{N}} 
=  \bigg( \frac{ \Z_N^{(V- \kappa \vartheta)} }{ \Z_{N}^{(V)}} \bigg)^{\frac{r}{N}} , 
\]
where we used that the function $\vartheta \ge 0$ and $\beta N \le \kappa$.
Using the upper--bound \eqref{ZUB} (applied to the potential $\widetilde{V}_\kappa=V- \kappa \vartheta$ which satisfies the Assumptions~\ref{ass:1}.1)) as well as the lower--bound \eqref{ZLB}, this shows that  for any $N\ge r \ge1$, 
\begin{equation}\label{UB2}
 \E_{N} \big[  e^{\beta r \sum_{i=1}^{N} \vartheta( x_i)}\big] \le C(\kappa)^r .
\end{equation}
By combining the estimates \eqref{UB1} and \eqref{UB2} with $r=1$, this completes the proof.
\end{proof}

\begin{remark} \label{rk:integrability}
If the empirical measure $\widehat{\mu}_N\Rightarrow\mu_\gamma$ in probability as $\beta N\to \gamma$ and $N\to+\infty$, then $\rho_N \Rightarrow\mu_\gamma$ in the same regime.
Thus, if  $\mu_\gamma$ is absolutely continuous,  it can be inferred from the uniform bound of Proposition~\ref{prop:Wegner} and Lebesgue differentiation theorem that  its density $\mu_\gamma \ll \nu_\kappa$. 
\end{remark}

Let us also record the following consequence of the Law of large numbers and Proposition~\ref{prop:Wegner}. 

\begin{proposition} \label{lem:lsc}
If $f :\X \mapsto (-\infty,+\infty]$ is a lower semicontinuous function which is bounded from below, then as $\beta N \to\gamma$, 
\[
 \liminf_{N\to+\infty} \E_N\bigg[\int f \d\emp_N  \bigg]  \ge \int f \d\mu_\gamma . 
\]
Moreover, for any  lower semicontinuous function $f\in L^1(\nu_\kappa)$, we have as $\beta N \to\gamma$, 

\[
 \lim_{N\to+\infty} \E_N\bigg[ \bigg| \int f \d\emp_N  -  \int f \d\mu_\gamma \bigg| \bigg] =0 . 
\]
\end{proposition}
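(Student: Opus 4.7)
The plan is to deduce both statements from the weak convergence $\emp_N \Rightarrow \mu_\gamma$ in probability (Proposition~\ref{prop:eq}) combined with the density-of-states bound (Proposition~\ref{prop:Wegner}), via an approximation of the lower semicontinuous function $f$ from below by bounded continuous functions. A preliminary observation is that since $\beta(N-1) \le \kappa$ and $\vartheta \ge 0$, one has $\widetilde V \ge \widetilde V_\kappa$, so Proposition~\ref{prop:Wegner} upgrades to the $N$-uniform estimate $\rho_N(u) \ll e^{-\widetilde V_\kappa(u)}$; equivalently, for any measurable $g\ge 0$,
\begin{equation*}
\E_N\bigg[\int g \, \d\emp_N\bigg] \ll \int g \, \d\nu_\kappa.
\end{equation*}
Recall also that on the metric space $\X$ any l.s.c.~$f$ bounded from below is the pointwise monotone limit of a sequence $f_k \in C_b(\X)$ (for instance via the truncated Moreau--Yosida regularization $f_k(x) := k \wedge \inf_{y\in\X}\{f(y) + k\, \dist(x,y)\}$).

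For the first claim, I would fix such a sequence $f_k \uparrow f$. Since $f_k$ is bounded and continuous, \eqref{LLN} and bounded convergence give $\E_N[\int f_k \d\emp_N] \to \int f_k \d\mu_\gamma$ as $N\to+\infty$, and the pointwise inequality $f \ge f_k$ propagates to $\liminf_N \E_N[\int f \d\emp_N] \ge \int f_k \d\mu_\gamma$. Letting $k \to +\infty$ and invoking monotone convergence on the right delivers the first claim.

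For the second claim, I would first treat the case when $f$ is also bounded from below. By the triangle inequality,
\begin{equation*}
\bigg|\int f \d\emp_N - \int f \d\mu_\gamma\bigg| \le \int (f - f_k) \d\emp_N + \bigg|\int f_k \d\emp_N - \int f_k \d\mu_\gamma\bigg| + \int (f - f_k) \d\mu_\gamma,
\end{equation*}
so taking $\E_N$ and controlling the first summand with the uniform Wegner bound yields
\begin{equation*}
\E_N\bigg[\bigg|\int f \d\emp_N - \int f \d\mu_\gamma\bigg|\bigg] \ll \int (f - f_k) \d\nu_\kappa + \E_N\bigg[\bigg|\int f_k \d\emp_N - \int f_k \d\mu_\gamma\bigg|\bigg] + \int (f - f_k) \d\mu_\gamma.
\end{equation*}
Letting $N\to+\infty$ for fixed $k$, the middle term vanishes by bounded convergence; then letting $k\to+\infty$, the other two vanish by monotone convergence, using $f\in L^1(\nu_\kappa)$ and the fact that by Remark~\ref{rk:integrability} the density of $\mu_\gamma$ is $\ll$ that of $\nu_\kappa$, so also $f \in L^1(\mu_\gamma)$. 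The case of a general l.s.c.~$f \in L^1(\nu_\kappa)$ is then reduced to the bounded-below case by truncating $f^{(M)} := f \vee (-M)$ and sending $M\to+\infty$ after $N\to+\infty$, the error being controlled once more by the Wegner bound together with dominated convergence.

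I expect the main subtlety to be that for unbounded $f$ the LLN alone does not give any uniform integrability for $\int f \d\emp_N$. The Wegner estimate plays the decisive role here: it provides exactly the uniform tail control of $\emp_N$ against the reference measure $\nu_\kappa$ needed to swap $f$ for its bounded continuous approximants with a controlled error, thereby reducing the whole argument to the bounded-continuous case already handled by the law of large numbers.
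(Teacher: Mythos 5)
Your proof is correct and reaches the same conclusions, but it takes a genuinely different technical route from the paper for the first inequality. The paper invokes Skorokhod's representation theorem to couple the random measures $\emp_N$ on a single probability space, applies the Portmanteau theorem pathwise to get the a.s.\ lower bound $\liminf_N \int f\, \d\nu_N \ge \int f\, \d\mu_\gamma$, and then Fatou's lemma. You instead approximate $f$ from below by bounded continuous $f_k \uparrow f$, pass to the limit in $N$ using \eqref{LLN} plus uniform boundedness (which gives $\E_N[\int f_k\, \d\emp_N]\to\int f_k\, \d\mu_\gamma$), and finish with monotone convergence in $k$. Your route is more elementary (it avoids Skorokhod and works entirely with the given sequence of distributions), at the cost of needing the explicit Moreau--Yosida construction; both are standard. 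For the second claim the paper exploits the a.s.\ lower bound from Skorokhod/Portmanteau to reduce to a one-sided estimate ($\E[|X_N|]\le \epsilon + \E[X_N]$ for large $N$), and then shows $\limsup_N \E_N[\int f\, \d\emp_N]\le\int f\, \d\mu_\gamma$ using the Wegner bound; you use a direct three-term triangle inequality with the same approximants $f_k$, again controlled by Wegner. These are essentially equivalent decompositions. One small point in your favor: the paper's WLOG reduction to $f\ge 0$ for the second claim is a bit terse when $f$ is unbounded below, whereas your truncation $f^{(M)}=f\vee(-M)$ with dominated convergence against $\nu_\kappa$ (and $\mu_\gamma$, via Remark~\ref{rk:integrability}) handles this cleanly.
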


\begin{proof}
Without loss of generality, we can assume that $f \ge 0$. 
By Skorokhod's representation Theorem and Proposition~\ref{prop:eq}, there exists a sequence of random measures $\nu_N$ taking values in $\M(\R^d)$ with the same law as $\widehat{\mu}_{N}$ such that $\nu_N \Rightarrow \mu_\gamma$ almost surely as $N\to+\infty$ (for the topology of weak convergence). 
By Portmanteau's theorem, since the function $f$ is lower--semicontinuous, almost surely as $N\to+\infty$,
\begin{equation}  \label{liminf}
 \liminf_{N\to+\infty}\int f \d\nu_N   \ge \int f \d\mu_\gamma
\end{equation}
Then, by by Fatou's Lemma, this implies that 
\[
 \liminf_{N\to+\infty} \E\bigg[\int f \d\nu_N  \bigg]  \ge \int f \d\mu_\gamma . 
\]
This proves the first claim (the RHS is allowed to be $+\infty$). 
For the second claim, we can still assume that $f\ge 0$ and observe that by Remark~\ref{rk:integrability}, $f\in L^1(\nu_\kappa)$. 
Then, for a given small $\epsilon>0$, observe that \eqref{liminf} implies that if  $N$ is sufficiently large (depending only on $\epsilon>0$), then 
\begin{equation} \label{estimate3}
\E \bigg[ \bigg| \int f \d\nu_N   -    \int f \d\mu_\gamma  \bigg| \bigg] \le  \epsilon  + 
\E \bigg[ \int f \d\nu_N    \bigg] -   \int f \d\mu_\gamma  .  
\end{equation}
Moreover, since $f$ is lower semicontinuous, there exists a sequence of continuous functions $0\le f_k \le k$ such that $f_k \nearrow f$ pointwise. By \eqref{LLN}, it holds for any $k\in\N$, 
\begin{equation} \label{estimate1}
\lim_{N\to+\infty}\E \bigg[ \int f_k \d\nu_N   \bigg]  =  \int f_k \d \mu_\gamma 
\end{equation}
and by Proposition~\ref{prop:Wegner} as $ \beta N \le \kappa$, 
\begin{equation} \label{estimate2}
\E \bigg[ \int (f-f_k) \d\nu_N   \bigg]  = \int  (f(x)-f_k(x)) \rho_N(x) \omega(\d x) \ll  \int  (f(x)-f_k(x))  \nu_\kappa(\d x) .
\end{equation}
Since $f \in L^1(\nu_\kappa)$, the RHS of \eqref{estimate2} converges to 0 as $k\to+\infty$ and, by \eqref{estimate1},  this implies that
\[
\limsup_{N\to+\infty}\E \bigg[ \int f \d\nu_N    \bigg]  \le  \int f \d\mu_\gamma    
\]
By \eqref{estimate3} and since $\epsilon>0$ is arbitrary, this proves the second claim.  
\end{proof}

\subsection{Convergence of the correlation functions} \label{sect:correlation2}

Let us define the random function
\begin{equation} \label{def:X}
\mathrm{X}_N(u) : = \int  \g(u,\cdot)  \d\widehat{\mu}_{N} = \frac 1N {\textstyle \sum_{j=1}^N}  \g(u,x_j) , \qquad u\in \X .
\end{equation}
This function is the potential generated by  the empirical measure $\widehat{\mu}_{N}$ and it is lower semicontinuous. 
Let us denote the marginals of the probability measure $\P_N$ by $\big(\rho_N^{(k)}\big)_{k=1}^N$ . That is for any integer $k < N$ and  $\u=(u_1,\dots,u_k) \in \X^k$,
\begin{equation}\label{def:rho}
 \rho_N^{(k)}(\u)   = \int \frac{e^{-\Ha_N(u_1,\dots, u_k ,x_{k+1} ,\dots, x_N)}}{ \Z_{N}^{(V)}} \omega(\d x_{k+1}) \cdots \omega(\d x_N) . 
\end{equation}
Then $\rho_N^{(1)}$ corresponds to the density of states and let us observe that according to \eqref{def:X}, we can rewrite for any $k=1,\dots, N$, 
\begin{equation} \label{marginals}
 \rho_N^{(k)}(\u) =  \frac{ \Z_{N-k}^{(V)}}{ \Z_{N}^{(V)}}  
 e^{-\Ha_k(\u)}  \E_{N-k} \big[  e^{-\beta (N-k) \sum_{i=1}^k \mathrm{X}_{N-k}(u_i)}\big] ,
 \qquad \u\in \X^k . 
\end{equation}

Fix $E\in\S_V$ and a normal neighborhood $U$ of $E$. 
It follows from formula \eqref{R:asymp} in the Appendix~\ref{sect:pp} that the correlation functions of the local point process \eqref{def:Xi} satisfies for any fixed $k\in\N$ as $N\to+\infty$
\begin{equation} \label{Rk}
 R_N^{(k)}(\x)= \rho_N^{(k)}\Big|_U\big(\varphi^{-1}(x_1N^{-1/n}), \cdots  ,  \varphi^{-1}(x_kN^{-1/n})\big)
\big( 1+\o(1)  \big)  , 
\end{equation}
uniformly for all $\x = (x_1,\dots ,x_n)$ in compact subsets of $(\R^n)^k$ . 
Our goal in this section is to establish that for any $k \ge 1$, $R_N^{(k)} \to \mu_\gamma(E)^k$ as $N\to+\infty$ and $\beta N \to\gamma$ for almost all $\x \in (\R^n)^k$ and to deduce  Theorem~\ref{thm:LF}. 
Our first and main Lemma deals with the asymptotics of the random potential $\mathrm{X}_N$.

\begin{lemma} \label{lem:main}
Fix $u \in\X$ and let $u_N\in\X$ be any sequence such that $u_N\to u$ as $N\to+\infty$. We have as  $\beta N \to\gamma$, 
\[
\lim_{N\to+\infty} \E_N \Big[ \big| \mathrm{X}_N(u_N) -  \U^{\mu_\gamma}(u) \big| \Big]  =0 , 
\]
where $ \displaystyle \U^{\mu_\gamma}(u) = \int \g(u,x) \mu_\gamma(\d x) $ is the equilibrium potential. 
\end{lemma}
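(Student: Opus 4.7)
The plan is to split, via the triangle inequality,
\[
\E_N\bigl[ |\mathrm{X}_N(u_N) - \U^{\mu_\gamma}(u)| \bigr]
\le \E_N\bigl[ |\mathrm{X}_N(u_N) - \mathrm{X}_N(u)| \bigr]
+ \E_N\bigl[ |\mathrm{X}_N(u) - \U^{\mu_\gamma}(u)| \bigr] ,
\]
and to handle each piece separately. The second, \emph{static} piece is the easy one: $x\mapsto \g(u,x)$ is lower semicontinuous by Assumption~\ref{ass:2}.2), and combining the decomposition $\g = \g_k + \g^k$ with H\"older's inequality ($\|\g^k(u,\cdot)\|_{L^p(\omega)} \le \mathrm{c}_k^{1/p}$) one checks that $\g(u,\cdot) \in L^1(\nu_\kappa)$, so the second assertion of Proposition~\ref{lem:lsc} directly yields $\E_N\bigl[|\mathrm{X}_N(u) - \U^{\mu_\gamma}(u)|\bigr] \to 0$.

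For the first, \emph{continuity} piece the idea is to use the Wegner estimate (Proposition~\ref{prop:Wegner}) to reduce to a deterministic integral:
\[
\E_N\bigl[ |\mathrm{X}_N(u_N) - \mathrm{X}_N(u)| \bigr]
\le \int |\g(u_N,x) - \g(u,x)|\, \rho_N(x)\, \omega(\d x)
\ll \int |\g(u_N,x) - \g(u,x)|\, e^{-\widetilde V_\kappa(x)}\, \omega(\d x) .
\]
I would then apply the decomposition $\g = \g_k + \g^k$ once more. The singular part is handled by H\"older, using $e^{-\widetilde V_\kappa}\le 1 \in L^q(\omega)$ because $\widetilde V_\kappa \ge 0$ and $e^{-\widetilde V_\kappa}\in L^1(\omega)$: this contributes at most $2\,\mathrm{c}_k^{1/p}\|e^{-\widetilde V_\kappa}\|_{L^q(\omega)}$ uniformly in $N$, which tends to $0$ as $k\to\infty$. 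For the continuous, bounded part $\g_k$, joint continuity on $\X\times\X$ together with $u_N\to u$ gives $\sup_{x\in K}|\g_k(u_N,x)-\g_k(u,x)| \to 0$ on any compact $K\subset\X$, while outside $K$ the integrand is bounded by $2k$ and $\int_{K^c} e^{-\widetilde V_\kappa}\,\omega(\d x)$ can be made arbitrarily small thanks to Assumption~\ref{ass:1}.1). Sending first $N\to\infty$, then $K\nearrow\X$, and finally $k\to\infty$ closes the argument.

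The main technical nuisance I expect is the Assumption~\ref{ass:2} setting, where $\g$ itself is not bounded below and so the pieces above are not immediately nonnegative. I would sidestep this by passing through the shifted kernel $\widetilde\g = \g + \vartheta(u)+\vartheta(x) \ge 0$, writing
\[
\mathrm{X}_N(u_N) \;=\; \widetilde{\mathrm{X}}_N(u_N) \,-\, \vartheta(u_N) \,-\, \int \vartheta\, \d\widehat\mu_N ,
\]
and similarly for $\U^{\mu_\gamma}(u)$. The resulting difference $\int \vartheta\,\d(\widehat\mu_N - \mu_\gamma)$ converges to $0$ in $L^1(\P_N)$ by Proposition~\ref{lem:lsc} applied to $\vartheta\in L^1(\nu_\kappa)$, and $\vartheta(u_N)\to\vartheta(u)$ by continuity of $\vartheta$; the analysis above then applies verbatim to the nonnegative kernel $\widetilde\g$.
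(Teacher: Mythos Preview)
Your proof is correct and follows essentially the same route as the paper: the same triangle-inequality split, Proposition~\ref{lem:lsc} for the static term, the Wegner estimate to reduce the moving term to a deterministic integral against $\nu_\kappa$, and the decomposition $\g=\g_k+\g^k$ with H\"older on the singular part and a dominated-convergence-type argument on the continuous part. The only cosmetic difference is in handling the sign issue under Assumptions~\ref{ass:2}: rather than your $\widetilde\g$ detour, the paper simply uses the pointwise bound $|\g_k(u,x)|\le k+\vartheta(u)+\vartheta(x)$ directly as an $L^1(\nu_\kappa)$ dominating function, which is a bit more streamlined but entirely equivalent.
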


\begin{proof}
We claim that for any fixed $u\in\X$, the function $x\mapsto\g(u,x)$ lies in $L^1(\nu_\kappa)$.
Indeed, under the Assumptions~\ref{ass:1}.3), by H\"older's inequality,  it holds for any $k \in \N$,  
\begin{equation}  \label{estimate4}
\begin{aligned}
\int \g^k(u,x) \nu_\kappa(\d x) 
& \ll \int \g^k(u,x)  e^{-\widetilde{V}_\kappa(x)}  \omega( \d x) \\
& \le  \bigg(\int \g^k(u,x)^p\, \omega(\d x) \bigg)^{1/p}  \bigg( \int  e^{-\widetilde{V}_\kappa(x)} \omega(\d x) \bigg)^{1/q}  \ll \mathrm{c}_k 
\end{aligned} 
\end{equation}
where we also used that $\widetilde{V}_\gamma \ge 0$ and that $\widetilde{V}_\kappa$ satisfies the Assumptions~\ref{ass:1}.1). 
In addition, since $|\g_k(u,x)| \le k + \vartheta(u) + \vartheta(x)$ by \eqref{def:vartheta}, it  holds that for all $k \in \N$, 
\begin{align}
\int \big| \g_k(u,x) \big|  \nu_\kappa(\d x) 
  &\notag \ll  k +  \int \vartheta(x) e^{-\widetilde{V}_\kappa(x)}  \omega(\d x) \\
& \label{estimate5}
\ll  k \int e^{-\widehat{V}_{\kappa + 1/k}(x) }  \omega(\d x)  <+\infty
\end{align}
where we used that $k +  \vartheta \le ke^{\vartheta/k}$ at the second step. 
Hence, since $\g$ is lower continuous, by Proposition~\ref{lem:lsc}, we obtain that for any fixed $u\in\X$,  as  $\beta N \to\gamma$,  
\[
\lim_{N\to+\infty} \E_N \Big[ \big| \mathrm{X}_N(u) -  \U^{\mu_\gamma}(u) \big| \Big]  =0 . 
\]
So it remains to show that if $u_N\to u$ in $\X$ , then
\begin{equation} \label{estimate7}
\lim_{N\to+\infty} \E_N \Big[ \big| \mathrm{X}_N(u_N) -   \mathrm{X}_N(u) \big| \Big]  =0 . 
\end{equation}
By the triangle inequality, we have 
\[
 \E_N \Big[ \big| \mathrm{X}_N(u_N) -   \mathrm{X}_N(u) \big| \Big]  \le \E_N \bigg[  \int \big| \g(u_N, \cdot ) - \g(u,\cdot) \big| \d\widehat{\mu}_{N} \bigg]  = \int \big| \g(u_N, x) - \g(u, x) \big|  \rho_N(x) \omega(\d x) . 
\]
By Proposition~\ref{prop:Wegner}, as $\beta N \le\kappa$, this implies that 
\begin{equation}  \label{estimate6}
 \E_N \Big[ \big| \mathrm{X}_N(u_N) -   \mathrm{X}_N(u) \big| \Big]  \ll \int   \big| \g(u_N, x) - \g(u, x) \big| \nu_\kappa(\d x) . 
 \end{equation}
 
 Observe that since $\vartheta$ is continuous, the implied constant in the estimate \eqref{estimate5} are uniform for all $u$ in a compact set of $\X$. Hence, since $\g_k$ is continuous, by the dominated convergence Theorem,    it holds for all $k\in\N$, 
\[
\lim_{N\to+\infty} \int \big| \g_k(u_N, x) - \g_k(u,x) \big| \nu_\kappa(\d x) =0  .
\]
Moreover, by \eqref{estimate4}, it holds uniformly for all $u\in \X$ and for all $k\in\N$,
\[
\int  \g^k(u, x)  \nu_\kappa(\d x)   \ll \mathrm{c}_k .
 \]
By combining these estimates with \eqref{estimate6}, we obtain that for all  $k\in\N$, 
\[
\limsup_{N\to+\infty} \E_N \Big[ \big| \mathrm{X}_N(u_N) -   \mathrm{X}_N(u) \big| \Big]  \ll  \mathrm{c}_k  . 
\]
By assumptions, since $\mathrm{c}_k \to0$ as $k\to+\infty$ this proves \eqref{estimate7}.
\end{proof}

From Lemma \ref{lem:main}, we can deduce the asymptotics of ratios of partition functions.

\begin{corollary} \label{cor:ratio}
Fix $k \in\N$ and $\u\in \X^k$. For any $ i \in\{1,\dots,k\}$, let $u_{N,i} $ be a sequence such that  $u_{N,i} \to u_i $ in $\X$ as $N\to+\infty$. We have as $\beta N \to\gamma$, 
\[
\lim_{N\to+\infty}  \E_{N} \big[  e^{-\beta N  \sum_{i=1}^k  \mathrm{X}_N(u_{N,i})}\big] = e^{-\gamma \sum_{i=1}^k  \U^{\mu_\gamma}(u_i)} . 
\]
Moreover, it holds for any fixed $k \in\N$,
 \[
\lim_{N\to+\infty} \tfrac{ \Z_{N-k}^{(V)}}{ \Z_{N}^{(V)}}   =  \mathrm{L}_\gamma^{-k} ,
\]
where the constant $\mathrm{L}_\gamma>0$ is as in  \eqref{eq:mu}.
\end{corollary}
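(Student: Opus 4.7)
The plan is to deduce both claims from Lemma~\ref{lem:main} together with a uniform integrability argument driven by the exponential moment bound \eqref{UB2}.

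For the first claim, Lemma~\ref{lem:main} yields $\mathrm{X}_N(u_{N,i}) \to \U^{\mu_\gamma}(u_i)$ in $L^1(\P_N)$ for each $i$, hence in probability, and together with $\beta N \to \gamma$ this gives $e^{-\beta N\sum_i \mathrm{X}_N(u_{N,i})} \to e^{-\gamma \sum_i \U^{\mu_\gamma}(u_i)}$ in probability. To upgrade this to convergence in expectation, I need a uniform $L^{1+\epsilon}$ bound. The key input is \eqref{def:vartheta}, which gives $-\g(u,x) \le \vartheta(u)+\vartheta(x)$, so that
\[
e^{-\beta N\sum_{i=1}^k \mathrm{X}_N(u_{N,i})} \le e^{\beta N \sum_{i=1}^k \vartheta(u_{N,i})}\, e^{k\beta \sum_{j=1}^N \vartheta(x_j)}.
\]
The first factor is deterministic and uniformly bounded (since $u_{N,i}\to u_i$ in $\X$, $\vartheta$ is continuous, and $\beta N\le \kappa$), while applying \eqref{UB2} with $r = k(1+\epsilon)$ gives a uniform bound on the $(1+\epsilon)$--th moment of the second factor for $N$ large enough. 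Uniform integrability then upgrades the convergence in probability to convergence in expectation, establishing the first limit.

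For the second claim, I first handle $k=1$ and then iterate, writing $\Z_{N-k}^{(V)}/\Z_N^{(V)}$ as a telescoping product. From the identity \eqref{Zratio},
\[
\frac{\Z_N^{(V)}}{\Z_{N-1}^{(V)}} = \mathrm{L}_0 \int \E_{N-1}\big[e^{-\beta(N-1) \mathrm{X}_{N-1}(u)}\big]\, \mu_0(\d u),
\]
and the first part of the corollary (applied with the constant sequence $u_N=u$) shows that the integrand converges pointwise to $e^{-\gamma \U^{\mu_\gamma}(u)}$. The very same bound used above gives $\E_{N-1}[e^{-\beta(N-1)\mathrm{X}_{N-1}(u)}] \ll e^{\kappa \vartheta(u)}$, and $\int e^{\kappa \vartheta(u)}\mu_0(\d u) \ll \int e^{-\widetilde V_\kappa(u)}\omega(\d u) < \infty$ by Assumption~\ref{ass:2}, providing an integrable dominant. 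Dominated convergence then yields
\[
\lim_{N\to+\infty} \frac{\Z_N^{(V)}}{\Z_{N-1}^{(V)}} = \mathrm{L}_0 \int e^{-\gamma \U^{\mu_\gamma}(u)}\mu_0(\d u) = \int e^{-\gamma \U^{\mu_\gamma}(u) - V(u)}\omega(\d u) = \mathrm{L}_\gamma,
\]
the last equality being the normalization of the equilibrium measure in \eqref{eq:mu}. Iterating in $k$ produces $\Z_{N-k}^{(V)}/\Z_N^{(V)} \to \mathrm{L}_\gamma^{-k}$.

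The main obstacle I anticipate is not the identification of the limit (which is forced by \eqref{eq:mu}) but the uniform integrability step: because $\g$ may be both singular and unbounded below, neither $\mathrm{X}_N(u_{N,i})$ nor the exponential one wishes to pass to the limit inside can be dominated pointwise, and the whole argument rests on being able to control $\E_N[e^{c\sum_j \vartheta(x_j)}]$ uniformly in $N$, which is precisely the content of \eqref{UB2}.
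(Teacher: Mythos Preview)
Your argument is correct. For the first claim you follow essentially the paper's approach: Lemma~\ref{lem:main} gives convergence in probability, and uniform integrability comes from \eqref{def:vartheta} combined with the moment bound \eqref{UB2}, exactly as in the paper's estimate \eqref{UB3}. For the second claim you take a slightly different route: the paper treats general $k$ in one shot by integrating the marginal identity \eqref{marginals}, bounding the integrand by $C^k e^{-\sum_i \widetilde V_\kappa(u_i)}$ (using $\Ha_k(\u)\ge\sum_i\widetilde V_\kappa(u_i)$), and applying dominated convergence; you instead settle $k=1$ from \eqref{Zratio} and then telescope $\Z_{N-k}^{(V)}/\Z_N^{(V)}=\prod_{j=0}^{k-1}\Z_{N-j-1}^{(V)}/\Z_{N-j}^{(V)}$. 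Your version sidesteps the interaction terms in $\Ha_k$ altogether, at the minor cost of having to invoke the first claim along the shifted sequences $N-1,\dots,N-k$ (which is fine since $\beta(N-j)\to\gamma$ for each fixed $j$). The two arguments are of comparable length and difficulty.
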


\begin{proof}
It follows immediately from  Lemma~\ref{lem:main} that under our assumptions, as $N\to+\infty$, 
\begin{equation} \label{probacvg}
\beta N {\textstyle \sum_{i=1}^k } \mathrm{X}_N(u_{N,i})  \to  \gamma   {\textstyle \sum_{i=1}^k }  \U^{\mu_\gamma}(u_i) 
\end{equation}
 in probability (with respect to $\P_N$). 
 Moreover, by \eqref{def:vartheta} and using the estimate \eqref{UB2}, it holds for any $r\ge 1$,
\begin{equation} \label{UB3}
\begin{aligned}
   \E_{N} \big[  e^{- r\beta N  \sum_{i=1}^k  \mathrm{X}_N(u_{N,i})}\big] 
& \le   e^{r   \kappa\sum_{i=1}^k  \vartheta(u_{N,i})}
   \E_{N} \big[  e^{ r k \beta \sum_{j=1}^N \vartheta( x_j) }\big]   \\
   & \ll C^{rk} e^{r   \kappa\sum_{i=1}^k  \vartheta(u_{N,i})} ,
\end{aligned}
\end{equation}
where the constant $C>0$ does not depend on $k \in \N$ and $r\ge 1$. 
Since $\vartheta$ is continuous on $\X$, this shows that the random variables $e^{- \beta N  \sum_{i=1}^k  \mathrm{X}_N(u_{N,i})}$ lie in $L^r(\P_N)$ for any $r\ge 1$, $N \ge r k$ and  $\u\in \X^k$.
Consequently, we deduce from \eqref{probacvg} that for any fixed $k\in\N$ and  $\u\in \X^k$,
\begin{equation} \label{limit1}
\lim_{N\to+\infty}     \E_{N} \big[  e^{- \beta N  \sum_{i=1}^k \mathrm{X}_N(u_{N,i})}\big] 
= e^{-\gamma \sum_{i=1}^k  \U^{\mu_\gamma}(u_i)} .
\end{equation}
For the second claim, let us observe that by integrating formula \eqref{marginals}, we obtain for any $k\in\{1,\dots N-1\}$, 
\[
 \frac{ \Z_{N}^{(V)}}{ \Z_{N-k}^{(V)}}  
= \int   \E_{N-k} \big[  e^{-\beta (N-k) \sum_{i=1}^k \mathrm{X}_{N-k}(u_i)}\big] e^{-\Ha_k(\u)}  \omega(\d u_1) \cdots \omega(\d  u_k)  . 
\]
From \eqref{H2}, we have $\Ha_k(\u) \ge {\textstyle \sum_{i=1}^k}  \widetilde{V}_\kappa(u_i)$  and using \eqref{UB3} with $r=1$ and $u_{N,i}=u_i$, it holds for any  $\u\in\X^n$,
\begin{equation} \label{est4}
  \E_{N-k} \big[  e^{-\beta (N-k) \sum_{i=1}^k \mathrm{X}_{N-k}(u_k)}\big] e^{-\Ha_k(\u)}  \ll C^k  e^{- \sum_{i=1}^k  \widetilde{V}_{\kappa}(u_i)} . 
\end{equation}

By Assumptions~\ref{ass:2}, the RHS of \eqref{est4} is an integrable function on $\X^k$. Since for almost every  $\u\in\X^k$, $e^{-\Ha_k(\u)}   \to  e^{-\sum_{i=1}^kV(u_i)} $ as $\beta\to 0$, it follows from \eqref{limit1}
and the dominated convergence theorem that for any fixed $k \in\N$, as $\beta N \to \gamma$, 
\[
\lim_{N\to+\infty}  \frac{ \Z_{N}^{(V)}}{ \Z_{N-k}^{(V)}}     = \int e^{- \sum_{i=1}^k \big( \gamma\U^{\mu_\gamma}(u_i)+ V(u_i) \big)}  \omega(\d u_1) \cdots \omega(\d  u_k)  . 
\]
By \eqref{eq:mu}, since $\mu_\gamma$ is a probability measure, this completes the proof.
\end{proof}

\begin{remark} \label{rk:equilibrium}
It follows from formula \eqref{rho1} and Corollary~\ref{cor:ratio} with $k=1$, that the density of states satisfies for all $u\in\X$, as $\beta N \to\gamma$,
\[
\lim_{N\to+\infty} \rho_N(u) =  \mathrm{L}_\gamma^{-1} e^{- \gamma \U^{\mu_\gamma}(u) - V(u)}  . 
\]
Since we also know that  $\rho_N \Rightarrow\mu_\gamma$ in the same regime, this gives an alternative proof of the self--consistent equation \eqref{eq:mu}  which is satisfied by the equilibrium density $\mu_\gamma$.  
\end{remark}

We are now ready to complete the proof of our main result, Theorem~\ref{thm:LF}.

\begin{proof}[Proof of Theorem~\ref{thm:LF}]
Recall that $U$ is a normal neighborhood of $E \in \S_V$  and that $\varphi$ is the normal coordinate chart in $U$. In particular $\varphi(U)=  \{x\in \R^n : |x|\le \delta \} $ for a $\delta>0$. 
Let us fix a large ball $\mathcal{K} \subset \R^n$. 
According to \eqref{marginals} and \eqref{Rk}, if $N$ is sufficiently large (depending on $\mathcal{K}$), the correlation functions of the local process $\Xi_N$ satisfy for any fixed $k\in\N$ and uniformly for all $\x \in \mathcal{K}^k$, 
\begin{equation} \label{R1}
 R_N^{(k)}(\x) =  \tfrac{ \Z_{N-k}^{(V)}}{ \Z_{N}^{(V)}}   
  \E_{N-k} \big[  e^{-\beta (N-k) \sum_{i=1}^k \mathrm{X}_{N-k}(u_{N,i})}\big]    e^{-\Ha_k(u_{N,1}, \dots, u_{N,k})}\big( 1+\o(1)  \big)  , 
\end{equation}
where $u_{N,i} = \varphi^{-1}(x_i/N^{1/n})$ for $i\in\{1,\dots,k\}$. 
Notice that $u_{N,i}\to E$ for all $i\in\{1,\dots,k\}$ as $N\to +\infty$, so that by Corollary~\ref{cor:ratio}, as $\beta N \to\gamma$, 
\begin{equation} \label{lim1}
\lim_{N\to+\infty}  \E_{N} \big[  e^{-\beta (N-k)  \sum_{i=1}^n  \mathrm{X}_{N-k}(u_{N,i})}\big] = e^{-\gamma k \U^{\mu_\gamma}(E)} . 
\end{equation}
Moreover,  it follows from the Assumptions~\ref{ass:1}.2) that for any $\x \in \mathcal{K}^k$ with $x_1 \neq \cdots \neq x_k$, 
\begin{equation} \label{lim2}
\lim_{N\to+\infty}\Ha_k(u_{N,1},\dots, u_{N,k})  = kV(E) , 
\end{equation}
where we used that by definition of the normal coordinates, $ \dist(u_{N,i}, u_{N,j}) = |x_i-x_j| N^{-1/n} $, so that
\begin{equation} \label{estimate8}
 \beta\, |\g(u_{N,i}, u_{N,j})|  \le \kappa  \frac{\dist(u_{N,i}, u_{N,j})^n  | \g(u_{N,i}, u_{N,j}) |}{ |x_i-x_j|^n}  
\end{equation}
and the RHS of \eqref{estimate8} converges to 0 as $N\to+\infty$. Note that we also used the continuity of the potential $V$ to obtain \eqref{lim2}. 
By combining \eqref{lim1}, \eqref{lim2} with \eqref{R1} and using the second asymptotics from  Corollary~\ref{cor:ratio},  we conclude that  for any fixed $k\in\N$ and for almost all $\x \in \mathcal{K}^k$, 
 \begin{equation} \label{lim3}
 \begin{aligned}
\lim_{N\to+\infty}   R_N^{(k)}(\x)  &=    \mathrm{L}_\gamma^{-k}  e^{- k\big( \gamma  \U^{\mu_\gamma}(E) + V(E) \big)} \\
& = \mu_\gamma(E)^k . 
\end{aligned}
\end{equation}
The second step follows from the equation \eqref{eq:mu} for the equilibrium density. 

\medskip

Finally, from \eqref{R1} and using the estimate \eqref{est4} as well as the fact that $\widetilde{V}_{\kappa} \ge 0$, we obtain that for all $\x \in \mathcal{K}^k$, 
\[
R_N^{(k)}(x_1,\dots, x_k) \le C^k \frac{\Z_{N-k}^\beta}{\Z_{N}^\beta} , 
\]
where the constant $C>0$ does not depend only on $k\in\N$. 
By Corollary~\ref{cor:ratio},  this shows that uniformly for all  $\x \in \mathcal{K}^k$, 
\begin{equation} \label{estimate9}
R_N^{(k)}(x_1,\dots, x_k) \le C^{2k} . 
\end{equation}
According to Lemma \ref{lem:ppwcvg}, if we combine the limits \eqref{lim3} with the estimates \eqref{estimate9}, we have proved that  the local process $\Xi_N$ converges in distribution as $N\to+\infty$  to a homogeneous Poisson point process on $\R^n$ with intensity $\mu_\gamma(E)$.
\end{proof}

\section{Local fluctuations near a \emph{boundary point}}  \label{sect:edge}

The goal of this section is to prove Theorem~\ref{thm:REF} and Theorem~\ref{thm:LEF}. 
Like in Section~\ref{sect:LF},  we assume that $\beta N\to\gamma$ as $N\to+\infty$ and that $\beta N\le \kappa$ for all $N\in\N$ for a fixed $\kappa>0$. 
We work under the following general conditions. 

 \begin{assumption} \label{ass:3}
Let $\X= \R^n$ and $\gamma \ge 0$. Let us suppose that the interaction kernel $\g \ge 0$ satisfies the  Assumptions~\ref{ass:1}.2)3) and that 
 $\g_k(x,u) \to 0$ as $x\to +\infty$ for all $u\in\R^n$ and $k\in\N$.  
In addition, suppose that $V:\R^n \to [0,+\infty)$ is continuous, $C^2$ outside of a compact set, that  $e^{-V}\in L^1(\R^n)$ and let $E_N \in \R^n$ be a diverging sequence which satisfies the following conditions:  $\nabla V(E_N) = \alpha_N \upsilon_N$ with $\alpha_N>0$, $\|\upsilon_N\| =1$  and it holds  as $N\to+\infty$, 
\begin{equation} \label{c1}
\frac{Ne^{-V(E_N)}}{ \mathrm{L}_\gamma \alpha_N^n} \to 1 , 
\qquad \upsilon_N\to \upsilon  ,
\qquad\text{and}\qquad  \alpha_N |E_N| \to +\infty .
\end{equation}
Finally, let $\varphi_N(x) = E_N + \alpha_N^{-1}\psi(x) $ where $\psi:\R^n \to \R^n$ be a diffeomorphism with Jacobian 
$\mathscr{J}(\psi)=1$ and suppose that for any compact set $\mathscr{K} \subset \R^n$, as $N\to+\infty$,
\begin{equation} \label{c2}
\alpha_N^{-2} \sup_{x \in \mathscr{K} }\left\| \nabla^2V\big(\varphi_N(x)\big) \right\| \to 0 . 
\end{equation}
 \end{assumption}
 
 Let us point out that we assume that the interaction kernel $\g$ decays at $\infty$ and that   these assumptions are very similar to those of Lemma~\ref{lem:Poissonedge} which deals with the case of i.i.d. particles. They are slightly technical but they apply to a large class of potentials $V\in C^2(\R^n)$, even though constructing the sequence $E_N$ could be difficult. 
By adapting the method of the proof of Theorem~\ref{thm:LF} presented in Section~\ref{sect:LF}, we obtain the following result.

 \begin{theorem} \label{thm:limitcase}
 Suppose that the interaction kernel and the potential $V: \R^n \to [0,+\infty)$ satisfies the Assumptions~\ref{ass:3}.
Then, the point process $\Xi_N = \sum_{j=1}^N  \boldsymbol\delta_{ \varphi_N^{-1}(x_j)}$  obtained by zooming around the point~$E_N$  converges in distribution  as $\beta N \to\gamma$ and $N\to+\infty$ to a Poisson point process with intensity $\theta(x) = e^{-\upsilon \cdot \psi(x)}$ on $\R^n$. 
  \end{theorem}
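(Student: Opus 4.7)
My plan is to mirror the proof of Theorem~\ref{thm:LF} from Section~\ref{sect:LF}, adapting each step to a diverging center $E_N$. Since $\varphi_N$ has Jacobian $\alpha_N^{-n}$, the $k$-point correlation function of $\Xi_N$ on any compact $\mathbf{y}=(y_1,\ldots,y_k)\in\mathscr{K}^k$ is
\[
R_{\Xi_N}^{(k)}(\mathbf{y})=\frac{N!}{(N-k)!}\,\alpha_N^{-nk}\,\rho_N^{(k)}(u_{N,1},\ldots,u_{N,k})\bigl(1+\o(1)\bigr),\qquad u_{N,i}:=\varphi_N(y_i),
\]
and formula~\eqref{marginals} factorises $\rho_N^{(k)}$ into a partition-function ratio, an energy factor $e^{-\Ha_k(u_{N,1},\ldots,u_{N,k})}$, and the Laplace transform $\E_{N-k}\bigl[\exp\bigl(-\beta(N-k)\sum_{i=1}^k\mathrm{X}_{N-k}(u_{N,i})\bigr)\bigr]$. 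The ratio equals $\mathrm{L}_\gamma^{-k}(1+\o(1))$ by Corollary~\ref{cor:ratio}, whose proof depends only on the LLN and Assumptions~\ref{ass:2}.

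For the energy factor, a second-order Taylor expansion of $V$ around $E_N$ yields
\[
V(u_{N,i})=V(E_N)+\upsilon_N\cdot\psi(y_i)+\O\bigl(\alpha_N^{-2}\sup_{x\in\mathscr{K}}\|\nabla^2V(\varphi_N(x))\|\bigr),
\]
whose remainder vanishes uniformly on $\mathscr{K}$ by~\eqref{c2}, and $\upsilon_N\to\upsilon$, so $\sum_iV(u_{N,i})\sim kV(E_N)+\sum_i\upsilon\cdot\psi(y_i)$. For the pair interaction, Assumption~\ref{ass:1}.2) gives $\beta\,\g(u_{N,i},u_{N,j})\ll\kappa\epsilon\,\alpha_N^n/(N|\psi(y_i)-\psi(y_j)|^n)$ for $y_i\neq y_j$, and \eqref{c1} forces $\alpha_N^n/N\to 0$, so the interaction contribution vanishes. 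The key new ingredient, replacing Lemma~\ref{lem:main}, is that $\mathrm{X}_N(u_N)\to 0$ in $L^1(\P_N)$ whenever $u_N\to\infty$ in $\R^n$: using the Wegner bound $\rho_N\ll e^{-\widetilde{V}_\kappa}$ and the decomposition $\g=\g_k+\g^k$, dominated convergence kills the $\g_k$-piece (bounded by $k$, pointwise $\to 0$ at infinity by hypothesis, integrable envelope), while H\"older controls the $\g^k$-piece by $\mathrm{c}_k\to 0$ as in~\eqref{estimate4}. Since $\g\ge 0$, the Laplace transform is bounded by $1$ and converges to $1$ by dominated convergence.

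Combining the three factors with the normalisation $Ne^{-V(E_N)}/(\mathrm{L}_\gamma\alpha_N^n)\to 1$ from~\eqref{c1} yields
\[
R_{\Xi_N}^{(k)}(\mathbf{y})\longrightarrow\prod_{i=1}^k e^{-\upsilon\cdot\psi(y_i)}=\prod_{i=1}^k\theta(y_i)
\]
for almost every $\mathbf{y}\in\mathscr{K}^k$, which matches the correlation functions of the target Poisson process. To conclude weak convergence via Lemma~\ref{lem:ppwcvg}, a uniform bound $R_{\Xi_N}^{(k)}\le C^k$ on $\mathscr{K}^k$ is also needed: since $\g\ge 0$ gives $\vartheta=0$ and hence $\widetilde{V}_\kappa=V$, the analogue of~\eqref{est4} combined with the Taylor bound $V(u_{N,i})\ge V(E_N)-C$ on $\mathscr{K}$ and the normalisation~\eqref{c1} supplies it. The main obstacle is the decay statement $\mathrm{X}_N(u_N)\to 0$ at infinity: the explicit hypothesis $\g_k(x,u)\to 0$ as $x\to\infty$, paired with the integrable envelope $e^{-\widetilde{V}_\kappa}$ from the Wegner estimate, is precisely what makes the dominated-convergence argument close, and without either of these inputs the edge argument would fail.
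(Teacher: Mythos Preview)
Your proposal is correct and follows essentially the same route as the paper: the same factorisation via \eqref{marginals}, the same edge analogue of Lemma~\ref{lem:main} (which the paper states separately as Lemma~\ref{lem:lim0}, proved exactly via your Wegner-plus-$\g_k/\g^k$ argument), the same Taylor expansion for $V$, the same Assumption~\ref{ass:1}.2) estimate for the pair terms, and the same uniform bound leading to Lemma~\ref{lem:ppwcvg}. One small inaccuracy: \eqref{c1} only gives $\alpha_N^n/N\ll 1$ (since $V\ge 0$), not $\alpha_N^n/N\to 0$; the pair-interaction term still vanishes because the $\epsilon$ in Assumption~\ref{ass:1}.2) is arbitrary, which is how the paper argues it.
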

 
In order to prove Theorem~\ref{thm:limitcase}, we relie on the following Lemma.
 
 \begin{lemma} \label{lem:lim0}
 Under the Assumptions~\ref{ass:3}, for any $x\in\R^n$, $\mathrm{X}_N(\varphi_N(x)) \to 0$ in $L^1(\P_N)$. 
 \end{lemma}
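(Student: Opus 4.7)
The plan is to reduce the $L^1$ statement to a deterministic tail estimate on the potential generated by $e^{-V}\d y$, via the Wegner bound, and then exploit the decomposition $\g=\g_k+\g^k$ from Assumption~\ref{ass:1}.3) together with the new hypothesis that the bounded piece $\g_k$ vanishes at infinity.

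First, since $\g\ge 0$ under Assumption~\ref{ass:3}, we have $\vartheta\equiv 0$, the modified potential of \eqref{def:vartheta} equals $V$, and $\mathrm{X}_N(\varphi_N(x))\ge 0$. Fubini and the definition of the density of states give
\[
\E_N\bigl[|\mathrm{X}_N(\varphi_N(x))|\bigr] \;=\; \int \g\bigl(\varphi_N(x),y\bigr)\,\rho_N(y)\,\omega(\d y).
\]
Proposition~\ref{prop:Wegner} (applied with $\widetilde{V}=V$) then yields $\rho_N(y)\ll e^{-V(y)}$ uniformly in $N$, so it suffices to show that
\[
I_N(x) := \int \g\bigl(\varphi_N(x),y\bigr)\,e^{-V(y)}\,\d y \;\longrightarrow\; 0 \qquad \text{as } N\to\infty.
\]

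Next, I would split $\g=\g_k+\g^k$ and treat the two pieces separately. For the tail part, the Hölder argument of \eqref{estimate4}, using that $V\ge 0$ so $(e^{-V})^q\le e^{-V}$ and $e^{-V}\in L^1$, gives the uniform bound
\[
\int \g^k\bigl(\varphi_N(x),y\bigr)\,e^{-V(y)}\,\d y
\;\le\; \Bigl(\int \g^k\bigl(\varphi_N(x),y\bigr)^p\,\d y\Bigr)^{1/p}\Bigl(\int e^{-V}\d y\Bigr)^{1/q}
\;\ll\; \mathrm{c}_k^{1/p}.
\]
For the continuous bounded part, the added hypothesis in Assumption~\ref{ass:3} that $\g_k(u,y)\to 0$ as $u\to\infty$ is tailor-made for dominated convergence: we note that $|\varphi_N(x)|\ge |E_N|-\alpha_N^{-1}|\psi(x)|\to +\infty$, since the assumption $\alpha_N|E_N|\to+\infty$ forces $|E_N|$ to dominate $\alpha_N^{-1}|\psi(x)|$ for any fixed $x$. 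Hence $\g_k(\varphi_N(x),y)\to 0$ pointwise, dominated by $k\,e^{-V(y)}\in L^1$, so $\int \g_k(\varphi_N(x),y)\,e^{-V(y)}\,\d y\to 0$.

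Combining the two estimates yields $\limsup_N I_N(x)\ll \mathrm{c}_k^{1/p}$ for every $k\in\N$; sending $k\to\infty$ and using $\mathrm{c}_k\to 0$ finishes the proof. There is no genuine obstacle here: the only place where the specific structure of the boundary regime enters is the verification $|\varphi_N(x)|\to\infty$, which is precisely why $\alpha_N|E_N|\to\infty$ was included in \eqref{c1}. Everything else is a direct transcription of the bulk argument (Lemma~\ref{lem:main}) to the present setting, with the simplification that we want the limit $0$ instead of $\U^{\mu_\gamma}(u)$.
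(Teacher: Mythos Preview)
Your proof is correct and follows essentially the same route as the paper: reduce to $\int \g(\varphi_N(x),y)e^{-V(y)}\,\d y$ via the Wegner estimate, split $\g=\g_k+\g^k$, use dominated convergence on the bounded part (after checking $|\varphi_N(x)|\to\infty$ from $\alpha_N|E_N|\to\infty$ and $|E_N|\to\infty$), and H\"older on the singular part. The only cosmetic difference is that you write $\mathrm{c}_k^{1/p}$ where the paper writes $\mathrm{c}_k$; yours is in fact the sharper reading of Assumption~\ref{ass:1}.3).
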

 
\begin{proof}
By \eqref{def:X}, the random function $\mathrm{X}_N \ge 0$ on $\R^n$, so it suffices to show that 
$\E_N\big[\mathrm{X}_N(\varphi_N) \big] \to 0$ as $N\to+\infty$ where $\varphi_N = \varphi_N(x)$ ($x\in\R^n$ is fixed).
 By  Proposition~\ref{prop:Wegner},  we have
\[\begin{aligned}
\E_N\big[\mathrm{X}_N(\varphi_N)\big]  &=  \int \g(\varphi_N,u) \rho_N(\d u) \\
& \ll   \int \g(\varphi_N,u) e^{-V(u)} \d u . 
\end{aligned}\]
Fix a $k\in\N$. Since $0\le \g_k \le k$, and $\g_k(\varphi_N,u) \to 0$ as $N\to+\infty$ for all $u\in\R^n$ (in particular, the last condition in \eqref{c1} implies that $|\varphi_N| \sim |E_N|$ which diverges as $N\to+\infty$), we deduce from Lebesgue's dominated convergence theorem that
\[
\lim_{N\to+\infty} \int \g_k(\varphi_N,u)  e^{-V(u)} \d u  =0  . 
\]
Hence, it follows from the Assumptions~\ref{ass:1}.3) and H\"older's inequality that 
\[
\limsup_{N\to+\infty} \E_N\big[\mathrm{X}_N(\varphi_N)\big]   \ll \mathrm{c}_k . 
\]
Since $ \mathrm{c}_k \to 0$ as $k\to+\infty$, this completes the proof.
\end{proof}

\begin{proof}[Proof of Theorem~\ref{thm:limitcase}]
According to formulae \eqref{marginals}, \eqref{correlations} and using the change of variables \eqref{trans}, 
 the correlation functions of the point process $\Xi_N = \sum_{j=1}^N  \boldsymbol\delta_{ \varphi_N^{-1}(x_j)}$ are given by for all fixed $k\in\N$  and $\x \in (\R^n)^k$, 
\begin{equation} \label{R2}
 R_N^{(k)}(\x) =  \tfrac{ \Z_{N-k}^{(V)}}{ \Z_{N}^{(V)}}   
  \E_{N-k} \big[  e^{-\beta (N-k) \sum_{i=1}^k \mathrm{X}_{N-k}( \varphi_N(x_i))}\big]    e^{-\Ha_k( \varphi_N(x_1), \dots,  \varphi_N(x_k))}   \tfrac{N! \alpha_N^{-kn}}{(N-k)!}  
\end{equation}
where we used that the Jacobian of the map $\varphi_N$ equals to $\alpha_N^{-n}$ on $\R^n$. 
Let us fix $k\in\N$  and a compact set $\mathcal{K} \subset \R^n$. 

\medskip

First, according to Lemma \ref{lem:lim0} and since the random function $\mathrm{X}_{N-k} \ge 0$, we obtain that as $\beta N \to\gamma$, 
\begin{equation} \label{limit5}
  \E_{N-k} \big[  e^{-\beta (N-k) \sum_{i=1}^k \mathrm{X}_{N-k}( \varphi_N(x_i))}\big]     \to 1. 
\end{equation}
Second, note that the first condition in \eqref{c1} implies that $\alpha_N^{n} \ll N$. 
So, using the Assumptions~\ref{ass:1}.2), we  have  for $x_i\neq x_j$,  $i,j\in\{1,\dots,k\}$, as  $\beta N \le \kappa$,
\[ \begin{aligned}
\beta\, \g( \varphi_N(x_i),  \varphi_N(x_j))  & \le  \kappa \epsilon N^{-1} |\varphi_N(x_i)- \varphi_N(x_j)|^{-n} \\
& =   \kappa \epsilon \alpha_N^{n} N^{-1} |\psi(x_i)-\psi(x_j)| \\
& \ll \epsilon |\psi(x_i)-\psi(x_j)| . 
\end{aligned}\]
Since this holds for arbitrary small $\epsilon>0$, we obtain that as $N\to+\infty$,
\[ 
\beta\, \g( \varphi_N(x_i),  \varphi_N(x_j))  \to 0. 
\]
Moreover, we verify by a Taylor expansion that the conditions \eqref{c1}--\eqref{c2} from the Assumptions~\ref{ass:3} imply that as $N \to\infty$,
\begin{equation} \label{limit7}
\frac{N e^{-V(\varphi_{N}(x))}}{ \mathrm{L}_\gamma \alpha_N^n}  \to \theta(x) = e^{-\upsilon \cdot \psi(x)} ,
\end{equation}
where the convergence is uniform for $x \in \mathcal{K}$. 
Then, it follows that for almost all $\x  \in \mathcal{K}^k$, 
\begin{equation} \label{limit6}
\lim_{N\to+\infty} e^{-\Ha_k(u_{N,1}, \dots, u_{N,k})}   \tfrac{N! \alpha_N^{-kn}}{(N-k)!}  
=  \lim_{N\to+\infty}  \prod_{i=1}^k \frac{N e^{-V(u_{N,i})}}{ \alpha_N^n}   =  \mathrm{L}_\gamma^k\, {\textstyle   \prod_{i=1}^k} \theta(x_i) . 
\end{equation}
By combining \eqref{limit5}, \eqref{limit6} with Corollary~\ref{cor:ratio} in formula \eqref{R2}, we obtain that  for almost every $\x  \in \mathcal{K}^k$,  as $\beta N \to\gamma$ and $N\to+\infty$, 
\begin{equation} \label{limit8}
 R_N^{(k)}(\x)  \to {\textstyle   \prod_{i=1}^k} \theta(x_i) .  
 \end{equation}
Finally, since the convergence \eqref{limit7} is uniform, $\mathrm{X}_{N-k} \ge 0$ and $\g\ge 0$, we deduce  from   \eqref{R2} that there exists a constant $C(\kappa)>0$ (which depends only on the parameter $\kappa$) such that for all $\x  \in \mathcal{K}^k$,
\[
 R_N^{(k)}(\x)   \le  C(\kappa)^k  \tfrac{ \Z_{N-k}^{(V)}}{ \Z_{N}^{(V)}}  \le C(\kappa)^{2k}.    
\]
According to Lemma \ref{lem:ppwcvg}, the previous estimate and \eqref{limit8} show that the point process $\Xi_N = \sum_{j=1}^N  \boldsymbol\delta_{ \varphi_N^{-1}(x_j)}$ converges in distribution as $N\to+\infty$  to a (non--homogeneous) Poisson point process on $\R^n$ with intensity $\theta$.
\end{proof}

We can apply Theorem~\ref{thm:limitcase} to the  Riesz kernels \eqref{Riesz} to deduce the asymptotics from Theorem~\ref{thm:REF}. 

 \begin{proof}[Proof of Theorem~\ref{thm:REF}]
It suffices to verify that the potential $V(x) = |x|^\alpha$ and the sequence $E_N = \eta_N\upsilon$  satisfy the Assumptions~\ref{ass:3}. We have $\alpha_N = \alpha \eta_N^{\alpha-1})$ and by a Taylor expansion, we verify that $ \frac{Ne^{ -\eta_N^\alpha}}{ \mathrm{L}_\gamma \alpha_N^n} \to 1$ as $N\to+\infty$, 
so that the conditions \eqref{c1} holds. 
Moreover, the Hilbert--Schmidt norm of the matrix $\nabla^2V$ satisfies as $\eta_N \to+\infty$,
\[\begin{aligned}
\left\| \nabla^2V\big(\varphi_N(x)\big) \right\|
& = \alpha\sqrt{(\alpha-1)^2+n-1} \big|\varphi_N(x)\big|^{\alpha-2} , \qquad x\in \R^n \\
& =   \alpha\sqrt{(\alpha-1)^2+n-1}  \eta_N^{\alpha-2} \big(1+ \O(\eta_N^{-\alpha}) \big) , \qquad x\in\mathscr{K} , 
\end{aligned}\]
where the last error term is uniform. This implies that
\[
\alpha_N^{-2} \sup_{x \in \mathscr{K} }\left\| \nabla^2V\big(\varphi_N(x)\big) \right\| 
\ll \eta_N^{-\alpha} .
\]
Upon observing that by construction, $\upsilon \cdot \psi(x) = e_1\cdot x$ for all $x\in\R^n$, this  completes the proof.
 \end{proof}

Theorem~\ref{thm:limitcase} can only be applied to an interaction kernel $\g \ge 0$ which decays away from the diagonal. 
Our next result applies specifically to the log kernel \eqref{logkernel}. Let $n=1 \text{ or } 2$ and for $u,x\in \R^n$,
\[
\g(x,u) = \log |x-u|^{-1} \, , \qquad \vartheta(u) = \log(1+|u|) . 
\]
We also let $V: \R^n \to \R$ be a $C^2$ potential which satisfies the conditions \eqref{c3}.

 \begin{theorem} \label{thm:logedge}
  Let $E_N \in \R^n$ be a diverging sequence which satisfies the following conditions:
    $\nabla V(E_N) = \alpha_N \upsilon_N$ with $\alpha_N>0$,  $\|\upsilon_N\| =1$  and it holds  as $N\to+\infty$, 
\begin{equation} \label{c4}
\frac{Ne^{-V(E_N)+ \beta N \log|E_N|}}{ \mathrm{L}_\gamma \alpha_N^{n}} \to 1 , 
\qquad \upsilon_N\to \upsilon  , 
\qquad N^{-1} \log(\alpha_N) \to 0 
\quad\text{and}\quad   \alpha_N |E_N| \to +\infty .
\end{equation}
Let $\varphi_N(x) = E_N + \alpha_N^{-1}\psi(x) $ where $\psi:\R^n \to \R^n$ is a a diffeomorphism with Jacobian $\mathscr{J}(\psi)=1$ and suppose that the condition \eqref{c2} holds. 
Then, the point process $\Xi_N = \sum_{j=1}^N  \boldsymbol\delta_{ \varphi_N^{-1}(x_j)}$  converges in distribution  as $\beta N \to\gamma$ to a Poisson point process with intensity $\theta(x) = e^{-\upsilon \cdot \psi(x)}$ on $\R^n$. 
  \end{theorem}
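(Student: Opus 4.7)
The strategy is to adapt the proof of Theorem~\ref{thm:limitcase}, but since the log kernel \eqref{logkernel} is neither non-negative nor decaying at infinity, I rely on the reformulation \eqref{H2} of the Hamiltonian via the auxiliary kernel $\widetilde\g(u,x) = \log|u-x|^{-1} + \vartheta(u) + \vartheta(x)$ and the shifted potential $\widetilde V = V - \beta(N-1)\vartheta$. Because $(1+|u|)(1+|x|)\ge|u-x|$, we have $\widetilde\g \ge 0$; and under the conditions \eqref{c3}, $\widetilde V$ satisfies Assumptions~\ref{ass:1}.1) uniformly in the regime $\beta N\le\kappa$. The Gibbs measure $\P_N$ is unchanged, so the density-of-states bound of Proposition~\ref{prop:Wegner} (formulated with $\widetilde V$), the ratio asymptotics of Corollary~\ref{cor:ratio}, and the exponential-moment bound \eqref{UB2} for $\vartheta$ all remain at our disposal.

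The key step is an analog of Lemma~\ref{lem:lim0}: for each fixed $x\in\R^n$,
\begin{equation*}
\mathrm{X}_N(\varphi_N(x)) + \vartheta(\varphi_N(x)) \longrightarrow 0 \quad\text{in }L^1(\P_N).
\end{equation*}
Observe that $\mathrm{X}_N(u) + \vartheta(u) = \widetilde{\mathrm{X}}_N(u) - \langle\vartheta,\widehat{\mu}_N\rangle$ where $\widetilde{\mathrm{X}}_N(u) = \int\widetilde\g(u,\cdot)\,\d\widehat{\mu}_N \ge 0$, so it suffices to show that both $\widetilde{\mathrm{X}}_N(\varphi_N(x))$ and $\langle\vartheta,\widehat{\mu}_N\rangle$ converge to $\int\vartheta\,\d\mu_\gamma$ in $L^1(\P_N)$. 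The second follows from Proposition~\ref{lem:lsc} for the continuous $\vartheta$. For the first, the integrand $\widetilde\g(\varphi_N(x),z) - \vartheta(z) = -\log\tfrac{|\varphi_N(x)-z|}{1+|\varphi_N(x)|}$ tends pointwise to $0$ as $|\varphi_N(x)|\to\infty$; splitting $\widetilde\g = \widetilde\g_k + \widetilde\g^k$ per Assumptions~\ref{ass:1}.3) and controlling the bounded and singular parts respectively by dominated convergence and by H\"older together with the Wegner estimate yields the $L^1$ bound---this is Lemma~\ref{lem:lim0} transposed to $\widetilde\g$, except that the limit is $\vartheta$ rather than $0$.

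With this in hand, I compute the correlation functions from \eqref{marginals} at $u_i = \varphi_N(x_i)$ with $x_i$ in a compact set $\mathcal{K}\subset\R^n$. The pairwise term $\beta\widetilde\g(\varphi_N(x_i),\varphi_N(x_j))\to 0$ for $x_i\ne x_j$ by $\beta\log\alpha_N\to 0$ (which follows from $N^{-1}\log\alpha_N\to 0$ and $\beta N\le\kappa$) and continuity of $\vartheta$ near $E_N$. A Taylor expansion using \eqref{c1}--\eqref{c2} and $\beta\log|E_N|\to 0$ (implicit in the conditions, and in any case needed to make $|E_N|^{\beta(N-1)}\sim|E_N|^{\beta N}$) gives
\begin{equation*}
e^{-\widetilde V(\varphi_N(x))} \sim e^{-V(E_N)+\beta N\log|E_N|}\,e^{-\upsilon\cdot\psi(x)}.
\end{equation*}
After decomposing $\mathrm{X}_{N-k} = \widetilde{\mathrm{X}}_{N-k} - \vartheta - \langle\vartheta,\widehat{\mu}_{N-k}\rangle$, the expectation in \eqref{marginals} factors into a prefactor $e^{\beta(N-k)\sum_i\vartheta(\varphi_N(x_i))} \sim |E_N|^{\beta(N-k)k}$ times a residual expectation tending to $1$: the $\widetilde{\mathrm{X}}_{N-k}$ and $\langle\vartheta,\widehat{\mu}_{N-k}\rangle$ contributions cancel at the level of means thanks to the key step, and the uniform $L^r$ control from \eqref{UB2} licenses the passage to the limit inside the exponential. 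Combined with $\Z_{N-k}^{(V)}/\Z_N^{(V)}\to\mathrm{L}_\gamma^{-k}$ and $N!/(N-k)!\sim N^k$, one obtains
\begin{equation*}
R_N^{(k)}(\x) \longrightarrow \Big(\tfrac{Ne^{-V(E_N)+\beta N\log|E_N|}}{\mathrm{L}_\gamma\alpha_N^n}\Big)^{\!k}\prod_{i=1}^k\theta(x_i) = \prod_{i=1}^k\theta(x_i)
\end{equation*}
by the first condition in \eqref{c4}. A uniform bound $R_N^{(k)}\ll C^k$ on compacts is obtained as in Theorem~\ref{thm:limitcase}, and Lemma~\ref{lem:ppwcvg} then gives the claimed Poisson convergence.

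The main technical obstacle is the cancellation inside the expectation: one must prove that $\widetilde{\mathrm{X}}_{N-k}(\varphi_N(x_i)) - \langle\vartheta,\widehat{\mu}_{N-k}\rangle$ is small not only in $L^1$ but in sufficiently many $L^r$-norms, uniformly in $N$, so that
\begin{equation*}
\E_{N-k}\bigl[e^{-\beta(N-k)\sum_i(\widetilde{\mathrm{X}}_{N-k}(\varphi_N(x_i)) - \langle\vartheta,\widehat{\mu}_{N-k}\rangle)}\bigr] \to 1.
\end{equation*}
This relies on the exponential integrability of $\vartheta$ from \eqref{UB2} combined with uniform Wegner-type bounds for $\widetilde\g$, and it is also the reason the hypotheses \eqref{c4} are stated with $\beta N\log|E_N|$ rather than $\gamma\log|E_N|$: without a rate of convergence for $\beta N\to\gamma$, one cannot replace $|E_N|^{\beta(N-1)}$ by $|E_N|^\gamma$ inside the asymptotic for $e^{-\widetilde V(\varphi_N(x))}$.
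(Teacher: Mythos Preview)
Your approach is correct and essentially the same as the paper's: the paper's Lemma~\ref{lem:lim1} is exactly your ``key step'' with the asymptotically equivalent centering $\log|E_N|$ in place of $\vartheta(\varphi_N(x))$, proved by the same Wegner-estimate-plus-dominated-convergence computation on $\int\bigl|\log\tfrac{|\varphi_N-u|}{|E_N|}\bigr|\,\nu_\kappa(\d u)$, and the $L^r$ control, pairwise-interaction estimate via $\beta\log\alpha_N\to 0$, Taylor expansion, and partition-function ratio are handled identically. Your reformulation through $\widetilde\g,\widetilde V$ is a packaging choice that makes the $L^r$ bound slightly cleaner (drop $\widetilde{\mathrm{X}}_{N-k}\ge 0$) but does not change the substance; note only that your detour ``both $\widetilde{\mathrm{X}}_N$ and $\langle\vartheta,\widehat\mu_N\rangle$ converge to $\int\vartheta\,\d\mu_\gamma$'' is unnecessary since only their difference is needed, and that is precisely what your integrand $\widetilde\g(\varphi_N,z)-\vartheta(z)$ already isolates.
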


The only substantial difference in the proof of Theorem~\ref{thm:logedge} compared with that of Theorem~\ref{thm:limitcase} lies in the following Lemma.

\begin{lemma}  \label{lem:lim1}
Under the assumptions of Theorem~\ref{thm:logedge}, for any $x\in \R^n$, the random variable $\mathrm{X}_N(\varphi_N(x)) + \log | E_N| $ converges in $L^1(\P_N)$ to $0$ as $N\to+\infty$. 
\end{lemma}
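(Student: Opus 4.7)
The plan is to rewrite
\[
\mathrm{X}_N(\varphi_N(x)) + \log|E_N| = \int \bar{h}_N(u)\, \d\widehat{\mu}_N(u), \qquad \bar{h}_N(u) := \log\frac{|E_N|}{|u - \varphi_N(x)|},
\]
and to estimate the $L^1(\P_N)$ norm through the Wegner bound of Proposition~\ref{prop:Wegner}:
\[
\E_N\big[\big|\mathrm{X}_N(\varphi_N(x)) + \log|E_N|\big|\big] \le \int |\bar{h}_N(u)|\, \rho_N(u)\, \d u \ll \int |\bar{h}_N(u)|\, e^{-\widetilde{V}(u)}\, \d u,
\]
with $\widetilde{V}(u) = V(u) - \beta(N-1)\vartheta(u)$. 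The key observation is that $\bar{h}_N\to 0$ uniformly on compact subsets of $\R^n$: since $|\varphi_N(x)|\sim|E_N|\to+\infty$, the ratio $|u - \varphi_N(x)|/|E_N|$ tends to $1$ for $u$ in a bounded set. So the bulk, where $\widehat{\mu}_N$ concentrates, contributes negligibly, and it only remains to control the tail at infinity and the singularity of $\g$ at $\varphi_N(x)$.

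Concretely, I would split $\R^n = A_N \sqcup B_N \sqcup C_N$, with
\[
A_N = \{|u - \varphi_N(x)|\le 1\}, \quad B_N = \{|u - \varphi_N(x)|>1,\; |u|\le R_N\}, \quad C_N = \{|u|>R_N\}\setminus A_N,
\]
for a sequence $R_N \to +\infty$ chosen with $R_N\ll|E_N|$ (for instance $R_N = \log|E_N|$). On the bulk $B_N$ the pointwise estimate $|\bar{h}_N(u)| \ll R_N/|E_N|$ is uniform, so this piece vanishes. On the tail $C_N$, one has $|\bar{h}_N(u)| \le 2\log|E_N| + \log(1+|u|) + O(1)$, and the integrability condition~\eqref{c3} combined with a Chebyshev estimate gives $\int_{|u|>R_N}(1+|u|)^m e^{-\widetilde{V}(u)}\d u = O(R_N^{-m'})$ for any $m,m'\ge 0$, which easily absorbs the logarithmic prefactors.

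The main task is the singular region $A_N$, where $\g(\varphi_N(x),u)\ge 0$, so using the decomposition of Assumption~\ref{ass:1}.3) we have $|\bar{h}_N|\le \log|E_N| + k + \g^k(\varphi_N(x),u)$. H\"older's inequality (as in the proof of Lemma~\ref{lem:lim0}) bounds the $\g^k$ piece by $O(c_k^{1/p})$, while the remainder is at most $(\log|E_N|+k)\int_{A_N} e^{-\widetilde{V}(u)}\d u$. The asymptotic~\eqref{c4} forces $e^{-V(E_N)}|E_N|^{\beta N}\sim \mathrm{L}_\gamma\alpha_N^n/N$; combined with the Hessian bound~\eqref{c2} and a Taylor expansion of $V$ around $\varphi_N(x)$ (to transfer the pointwise value of $\widetilde V$ to a supremum on $A_N$), this yields $\sup_{A_N} e^{-\widetilde{V}(u)} \ll \alpha_N^n/N$, and the condition $N^{-1}\log\alpha_N\to 0$ in~\eqref{c4} ensures $(\log|E_N|+k)\cdot\alpha_N^n/N \to 0$ for each fixed $k$. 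Letting $N\to+\infty$ first and then $k\to+\infty$ closes the argument.

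The main obstacle is precisely the interplay between the logarithmic growth of $\bar{h}_N$ (both at the singularity and at the far tail) and the decay of the Wegner weight $e^{-\widetilde{V}}$ at the rim of the gas. In contrast with Lemma~\ref{lem:lim0}, where $\g\ge 0$ and decays at infinity so that a single H\"older argument suffices, here both ends of the integration require delicate cancellation: the tail is tamed by the rapid decay of $e^{-V}$ from~\eqref{c3}, whereas the singularity relies crucially on the sharp asymptotic~\eqref{c4} linking $V(E_N)$ to the normalization $\mathrm{L}_\gamma$.
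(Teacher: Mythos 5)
Your overall plan (rewrite as $\int \bar h_N\,\d\widehat\mu_N$, apply Proposition~\ref{prop:Wegner}, then estimate $\int|\bar h_N|\,e^{-\widetilde V}\,\d u$) is the right starting point and coincides with the paper's. The bulk region $B_N$ and the tail $C_N$ are handled correctly. The genuine gap is in the singular region $A_N = \{|u-\varphi_N(x)|\le 1\}$, specifically in the claimed estimate $\sup_{A_N}e^{-\widetilde V(u)}\ll \alpha_N^n/N$.

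The Hessian condition~\eqref{c2} controls $\nabla^2 V\big(\varphi_N(x)\big)$ only for $x$ in a fixed compact $\mathscr{K}$, which means only at points within distance $O(\alpha_N^{-1})$ of $E_N$. When $\alpha_N\to+\infty$ (this happens for $V(x)=|x|^\alpha$ with $\alpha>1$, including the Gaussian case $\alpha=2$ that is the whole point of Corollary~\ref{thm:Gumbel}), the ball $A_N$ of fixed radius $1$ is much larger than this scale, and \eqref{c2} gives no information there. A first-order Taylor expansion already shows the sup bound is false: with $V(x)=|x|^2$ in $n=1$, the gradient term gives $V(\varphi_N-1)-V(\varphi_N)\approx -\alpha_N$, so $\sup_{A_N}e^{-\widetilde V}\asymp e^{\alpha_N}\cdot\alpha_N^n/N$, off by a factor $e^{\alpha_N}\to+\infty$. (The final conclusion still happens to hold for $|x|^\alpha$ because $e^{\alpha_N}=o(N^\epsilon)$, but that is a coincidence of the example, not a consequence of the assumptions you invoke.) The correct tool here is not the Hessian bound but the polynomial decay from condition~\eqref{c3}: since $V(u)\ge\kappa\log(1+|u|)-c_\kappa$ for every $\kappa\ge 0$ and $\beta(N-1)\le\kappa_0$, one gets $e^{-\widetilde V(u)}\ll (1+|u|)^{-n}$ uniformly in $N$, hence $\int_{A_N}e^{-\widetilde V}\,\d u\ll |E_N|^{-n}$ and $(\log|E_N|+k)\int_{A_N}e^{-\widetilde V}\,\d u\to 0$ for each fixed $k$. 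This is essentially what the paper does, except that it works at the rescaled radius $e^{-k}|E_N|$ (changing variables $z=(\varphi_N-u)/|E_N|$) and applies the same $|u|^{-n}$ bound to obtain $\limsup_N\int_{|z|\le e^{-k}}\log|z|^{-1}\,\d z$, which vanishes as $k\to+\infty$. So the decomposition idea is salvageable, but you must replace the Taylor/Hessian step on $A_N$ by the global polynomial bound from \eqref{c3}; as written, that step does not follow from the assumptions of Theorem~\ref{thm:logedge} and is false for the quadratic potential.
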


\begin{proof}
For simplicity, we denote $\varphi_N = \varphi_N(x)$ and  $\eta_N =|E_N|$. 
In particular, the last condition in \eqref{c4} implies that $|\varphi_N| \sim \eta_N$ and $\eta_N \to+\infty$ as $N\to+\infty$. 
 First observe that according to \eqref{def:X}, we have 
\[
\big| \mathrm{X}_N(\varphi_N) + \log \eta_N \big|  \le  \int \bigg| \log\bigg| \frac{\varphi_N- u}{\eta_N} \bigg|\bigg| 
\emp_N(\d u)
\]
so that by Proposition~\ref{prop:Wegner}, 
\begin{equation} \label{limit4}
\E_N\big[ \big| \mathrm{X}_N(\varphi_N) + \log \eta_N \big| \big]  \ll  \int \bigg| \log\bigg| \frac{\varphi_N- u}{\eta_N} \bigg|\bigg| 
\nu_\kappa(\d u)
\end{equation}
where $\nu_\kappa$ is the probability measure \eqref{def:mu0}. 
Since $\eta_N\to+\infty$, using the last last condition in \eqref{c1},  we have for any fixed $u\in \R^n$, 
\[
 \log\bigg| \frac{\varphi_N- u}{\eta_N} \bigg| \to \log| \upsilon| =0 .  
\]
Since $ \log^+\big| \frac{\varphi_N- u}{\eta_N} \big| \le \log\big(2+ \frac{|u|}{\eta_N}   \big) \le \log(2+|u|)$
where we used that $\eta_N\ge 1$ and $|\varphi_N| \le 2 \eta_N$ if $N$ is sufficiently large, 
by Lebesgue's dominated convergence theorem, this implies that  as $N\to+\infty$,
\[
 \int  \log^+\bigg| \frac{\varphi_N- u}{\eta_N} \bigg|  
\nu_\kappa(\d u)
\to 0 .
\]
In fact, by the same argument, we obtain that for any $k\in\N$, 
\begin{equation} \label{limit2}
 \int \bigg| k \wedge \log\bigg| \frac{\varphi_N- u}{\eta_N} \bigg|^{-1}  \bigg| \, 
\nu_\kappa(\d u)
\to 0 .
\end{equation}
On the other--hand, by a change of variables,
\[
 \int  \log\bigg| \frac{\varphi_N- u}{\eta_N} \bigg|^{-1} \1_{|\varphi_N- u| \le e^{-k} \eta_N}  \, 
\nu_\kappa(\d u)
=  \mathrm{C}_\kappa^{-1}  \int_{|z| \le e^{-k}} \log|z|^{-1} \eta_N^{n} e^{-\widetilde{V}_\kappa(\varphi_N +z \eta_N)}  \d z . 
\] 
From the conditions \eqref{c3}, there exists a constant $ c_\kappa \in\R$ such that
$\inf_{u\in\R^n}\big\{  \widetilde{V}_\kappa(u) - n \log |u|  \big\} \ge - c_\kappa$ so that  
\[
 \eta_N^{n} e^{-\widetilde{V}_\kappa(\varphi_N +z \eta_N)}  \le  e^{ c_\kappa} \big|\varphi_N/\eta_N +z \big|^{-n} . 
\]
Observe the RHS converges to $|v+z|^{-1} \le (1-e^{-1})^{-1}$ as $N\to+\infty$ uniformly for all $ |z| \le e^{-1}$.  This shows that for any $k\in\N$, 
\begin{equation} \label{limit3}
\limsup_{N\to+\infty}   \int  \log\bigg| \frac{\varphi_N- u}{\eta_N} \bigg|^{-1} \1_{|\varphi_N- u| \le e^{-k} \eta_N}  \,  \nu_\kappa(\d u)
\ll \int_{|z| \le e^{-k}} \log|z|^{-1} \d z 
\end{equation}
where the implied constant depends only on the parameter $\kappa$. 
Hence, since the RHS of \eqref{limit3} converges 0 as $k\to+\infty$, by combining this estimate with \eqref{limit2}, we conclude that as $N\to+\infty$
\[
\int \bigg| \log\bigg| \frac{\varphi_N- u}{\eta_N} \bigg|\bigg|  \nu(\d u) \to 0 . 
\]
From the estimate \eqref{limit4}, this completes the proof. 
\end{proof}

\begin{proof}[Proof of Theorem~\ref{thm:logedge}]
We proceed exactly as in the proof of Theorem~\ref{thm:limitcase}, except that we need to be slightly careful with uniformity of the limits as the interaction kernel $\g$ is not positive. 
Let us fix $k\in\N$  and a compact set $\mathcal{K} \subset \R^n$. 
First, by Lemma \ref{lem:lim1}, we obtain that for any fixed $\x\in\mathcal{K}^k$,  as $\beta N \to\gamma$, 
\begin{equation} \label{limit9}
  \E_{N-k} \big[  e^{-\beta (N-k) \sum_{i=1}^k \mathrm{X}_{N-k}( \varphi_N(x_i)) - k \beta (N-k) \log|E_N| }  \big]     \to 1. 
\end{equation}
Indeed if we let $\xi_N := \beta (N-k) \sum_{i=1}^k \big( \mathrm{X}_{N-k}( \varphi_N(x_i)) +\log|E_N| \big) $, then $\xi_N\to 0$ in probability and we claim that $e^{- \xi_N} \in L^r(\P_{N-k})$ for any $r\ge 1$. 
This last claim follows from \eqref{def:vartheta} and the estimate \eqref{UB2}. Namely, we have for any $r\ge 1$, 
 \[ \begin{aligned}
   \E_{N-k} \big[ e^{- r \xi_N} \big]  
   & \le   e^{r \beta (N-k) \sum_{i=1}^k \big( \vartheta( \varphi_N(x_i)) -  \log|E_N| \big) }   \E_{N-k} \big[ e^{r k \beta (N-k) \int \vartheta \d\emp_{N-k}}   \big]     \\
   & \le  C(\kappa)^{r k}   e^{r  \kappa \sum_{i=1}^k \big( \vartheta( \varphi_N(x_i)) -  \log|E_N| \big)_+ }   .
\end{aligned} \]
 Now, using the last last condition in \eqref{c1}, we see that $ \vartheta( \varphi_N(x_i)) -  \log|E_N| \to 0$  uniformly for all $\x\in\mathcal{K}^k$ since $\varphi_N(x_i) \sim |E_N|$ as $N\to+\infty$. 
 This implies that for any $k\in\N$ and $r\ge 1$,
 \begin{equation} \label{estXedge}
   \E_{N-k} \big[  e^{- r \beta (N-k) \sum_{i=1}^k \big( \mathrm{X}_{N-k}( \varphi_N(x_i)) + \log|E_N| \big) }  \big]  \le C(\kappa)^{2r k} 
 \end{equation}
 Formula \eqref{R2} for the $k^{th}$ correlation function still holds, so that using the asymptotics \eqref{limit9} and Corollary~\ref{cor:ratio},  we obtain for all $\x\in\mathcal{K}^k$, as $N\to+\infty$, 
\[
R_N^{(k)}(\x) =  e^{\beta N\log|E_N| }    e^{-\Ha_k( \varphi_N(x_1), \dots,  \varphi_N(x_k))}   \tfrac{N! \alpha_N^{-kn}}{\mathrm{L}_\gamma^k(N-k)!}   \big(1 +\o(1) \big) . 
\]
Note that we used that $N^{-1}\log|E_N| \to 0$ as $N\to+\infty$ for otherwise the first condition in \eqref{c4} cannot be satisfied since the potential $V(u)$ grows faster than $\kappa \log |u|$  for any $\kappa \ge 0$ (see the first condition \eqref{c3}). 
Since $\beta \log \alpha_N \to 0 $ (see the third condition in \eqref{c4}),  we have  for $x_i\neq x_j$,  $i,j\in\{1,\dots,k\}$, as  $\beta N \to\gamma$,
\[
\beta \, \g( \varphi_N(x_i),  \varphi_N(x_j))  =  \beta \big( \g( \psi(x_i),  \psi(x_j)) + \log \alpha_N   \big)  \to 0 . 
\]
Moreover, we verify by a Taylor expansion that the conditions \eqref{c4} and \eqref{c2} imply that as $N \to\infty$,
\begin{equation} \label{limit10}
\frac{N e^{-V(\varphi_{N}(x))+ \beta N\log|E_N|}}{ \mathrm{L}_\gamma \alpha_N^n}  \to \theta(x) = e^{-\upsilon \cdot \psi(x)} , 
\end{equation}
where the convergence is uniform for $x \in \mathcal{K}$. 
Then, this implies  that for almost all $\x  \in \mathcal{K}^k$, 
\[
\lim_{N\to+\infty} R_N^{(k)}(\x)    
=  \lim_{N\to+\infty}  \prod_{i=1}^k \frac{N e^{-V(\varphi_{N}(x))+ \beta N\log|E_N|}}{ \mathrm{L}_\gamma \alpha_N^n} =   \prod_{i=1}^k \theta(x_i) . 
\]
Finally, since the convergence \eqref{limit10} is uniform and using the estimate \eqref{estXedge} with $r=1$, by formula \eqref{R2}, we also obtain the uniform bound $ R_N^{(k)}(\x)  \le  C(\kappa)^{3 k}$ for all $\x\in\mathcal{K}^k$.
By Lemma \ref{lem:ppwcvg}, this show that the point process $\Xi_N = \sum_{j=1}^N  \boldsymbol\delta_{ \varphi_N^{-1}(x_j)}$ converges in distribution as $\beta N \to\gamma$ and $N\to+\infty$  to a (non--homogeneous) Poisson point process on $\R^n$ with intensity $\theta$.
\end{proof}

We easily deduce Theorem~\ref{thm:LEF} from Theorem~\ref{thm:logedge}. 
 
  \begin{proof}[Proof of Theorem~\ref{thm:LEF}]
It just suffices to verify that the potential $V(x) = |x|^\alpha$ satisfies the conditions~\eqref{c3}, \eqref{c4} and \eqref{c2}. 
  The sequence $\eta_N$ is constructed in such a way that 
  $\frac{Ne^{-V(\eta_N \upsilon)+ \beta N \log \eta_N}}{ \mathrm{L}_\gamma \alpha_N^{n}} \to 1 $
  with $\alpha_N = \alpha \eta_N^{\alpha-1}$. 
  The other conditions are easily verified as in the proof of Theorem~\ref{thm:REF}. 
  \end{proof}

\appendix\section{Appendix}

Before discussing the properties of the equilibrium measure and the law of large numbers, let us recall the following basic properties of the relative entropy \eqref{def:ent}. 

\begin{lemma} \label{lem:entropy}
For any $\nu\in\M(\X)$, the function $ \Hi(\cdot|\nu) \ge 0$ is lower semicontinuous and strictly convex. The level sets  $\{\Hi(\cdot|\nu)  \le t\big\}$  are compact for all $t\ge0$ and $ \Hi(\mu|\nu) = 0$ if and only if $\mu= \nu$.
\end{lemma}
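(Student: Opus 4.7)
The plan is to establish the five claims—non\-negativity, the equality case, strict convexity, lower semicontinuity, and compactness of sublevel sets—essentially independently, using the strict convexity of $\phi(x) = x \log x$ together with the Donsker--Varadhan variational representation
\begin{equation*}
\Hi(\mu|\nu) = \sup_{f \in \C_b(\X)} \bigg\{ \int f \, \d\mu - \log \int e^f \, \d\nu \bigg\} .
\end{equation*}

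For non\-negativity and the equality case, the argument is a one-line application of Jensen's inequality. If $\mu = \rho \, \nu$, then $\Hi(\mu|\nu) = \int \phi(\rho) \d\nu \geq \phi\big(\int \rho \d\nu\big) = \phi(1) = 0$, with equality if and only if $\rho$ is $\nu$\-a.e.\ constant, forcing $\rho \equiv 1$ and $\mu = \nu$. For strict convexity, fix $\lambda \in (0,1)$ and two distinct $\mu_0, \mu_1 \in \M(\X)$ with $\Hi(\mu_i|\nu) < +\infty$; then $\mu_\lambda = (1-\lambda)\mu_0 + \lambda\mu_1$ has density $\rho_\lambda = (1-\lambda)\rho_0 + \lambda \rho_1$ and the pointwise strict convexity of $\phi$ on $\{\rho_0 \ne \rho_1\}$, whose $\nu$\-measure is positive, gives the strict inequality after integration. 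The case where one of the $\mu_i$ is singular with respect to $\nu$ is trivial.

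For lower semicontinuity, I would first justify the variational formula above; this is classical and reduces the problem to expressing $\Hi(\cdot|\nu)$ as a supremum of weakly continuous affine functionals $\mu \mapsto \int f \d\mu - \log \int e^f \d\nu$ (indexed by $f\in\C_b(\X)$), from which lower semicontinuity with respect to the weak topology on $\M(\X)$ is immediate.

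For compact level sets $L_t = \{\mu : \Hi(\mu|\nu) \leq t\}$, since these are already closed by lower semicontinuity, Prokhorov's theorem reduces the claim to tightness of $L_t$. I would use the standard entropy-based tightness bound: for any Borel set $A$ with $\nu(A) \in (0,1)$, applying the Donsker--Varadhan formula to $f = c \mathbf{1}_{A^c}$ and optimizing in $c$ yields
\begin{equation*}
\mu(A^c) \leq \frac{\log 2 + \Hi(\mu|\nu)}{\log\bigl(1 + 1/\nu(A^c)\bigr)} .
\end{equation*}
Since $\X$ is Polish and $\nu$ is a Radon probability measure, for any $\varepsilon > 0$ there exists a compact $K \subset \X$ making $\nu(K^c)$ arbitrarily small, and the bound above then makes $\mu(K^c) \leq \varepsilon$ uniformly over $\mu \in L_t$. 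This yields tightness of $L_t$ and hence compactness.

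The main obstacle is the compactness of sublevel sets, which genuinely uses both the variational characterization of $\Hi$ and the Polish/Radon structure of $(\X, \nu)$; the other items are immediate consequences of the convexity of $\phi$ and the variational formula. I would state the Donsker--Varadhan representation as a standard fact (citing e.g.\ the reference used for the large deviation principle in~\cite{GZ18}) rather than reprove it, and focus the exposition on the tightness estimate.
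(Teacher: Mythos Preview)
Your proposal is correct. In the paper's main text, Lemma~\ref{lem:entropy} is simply stated as a ``basic property'' of the relative entropy and no proof is given; the only related material appears in the leftover ``Extra stuff'' section after the bibliography, where a sketch of the compactness of sublevel sets is recorded. That sketch proceeds differently from yours: instead of the Donsker--Varadhan variational formula, it applies Jensen's inequality to the partition $\{Q_\epsilon, Q_\epsilon^c\}$ to obtain the two--point entropy lower bound
\[
\Hi(\mu|\nu) \ge \mu(Q_\epsilon)\log\frac{\mu(Q_\epsilon)}{\nu(Q_\epsilon)} + \mu(Q_\epsilon^c)\log\frac{\mu(Q_\epsilon^c)}{\nu(Q_\epsilon^c)},
\]
and then extracts $\mu(Q_\epsilon^c) \le \frac{C + \Hi(\mu|\nu)}{\log \epsilon^{-1}}$. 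This is of course equivalent to what you get by plugging $f = c\,\1_{Q_\epsilon^c}$ into the variational formula and optimizing in~$c$; your route has the advantage that lower semicontinuity comes for free from the same representation, whereas the paper's Jensen argument handles only tightness. One small point: you state the variational formula over $f\in\C_b(\X)$ but then test with an indicator function; it is worth remarking that the inequality $\int f\,\d\mu - \log\int e^f\,\d\nu \le \Hi(\mu|\nu)$ holds for all bounded measurable $f$ (or approximate $\1_{K^c}$ by continuous functions using that $\X$ is Polish), so this is not an issue.
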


Moreover, since $\mu_0 \ll \nu_\gamma$ according to \eqref{def:mu0} and \eqref{eq:mu}, we have 
 \[
 \Hi(\mu|\mu_0) = \Hi(\mu|\nu_\gamma) + \gamma \int \vartheta \d\mu   - \log(\mathrm{C}_\gamma/\mathrm{L}_0) , 
 \]
 where both sides could be $+\infty$. 
This implies that for any $\mu\in\M(\X)$ with $\displaystyle\int \vartheta \d\mu <+\infty$, 
\begin{equation} \label{F2}
 \F_\gamma(\mu) = \frac\gamma2  \En(\mu) + \Hi(\mu|\mu_0) + \log(\mathrm{C}_\gamma/\mathrm{L}_0)  , \qquad \gamma \ge 0.
 \end{equation}

\subsection{Large deviation principle and properties of the equilibrium measure}\label{sect:eq}

The goal of this section is to go over the proof of Proposition~\ref{prop:eq}. 
The methods that we use are classical (see e.g.~\cite{GZ18} and reference therein), but since our model is slightly more general than those previously studied in the literature, we go over the main steps of the proof.
Let us recall that under our assumptions, the free energy \eqref{def:F} is non--negative and it attains its minimum. At first, we verify that all minimizers have nice regularity properties and satisfy the self--consistent equation \eqref{eq:mu}. 
Then, we show that under $\P_N $, the empirical measure $\widehat{\mu}_N$ satisfies a large deviation principle whose rate function is (up to a constant) the free energy $\F_\gamma$. 
This is much stronger than the statement of  Proposition~\ref{prop:eq}. 
Finally, we review that for the Riesz and log gases on $\R^n$, the free energy has a unique minimizer. 

\begin{proposition} \label{prop:regularity}
Suppose that the  Assumptions~\ref{ass:2} hold and let $\mu_\gamma \in \M(\X)$ be a minimizer of the the free energy $\F_\gamma$. 
Then the potential  $\U^{\mu_\gamma}$ is continuous on $\X$ and  $\mu_\gamma$ has a density with respect to $\omega$ which  satisfies for all $x\in\X$, 
\begin{equation}  \label{EL3}
\mu_\gamma(x) = \mathrm{C}_\gamma^{-1} e^{- \gamma \U^{\mu_\gamma}(x) - V(x)} . 
\end{equation}
\end{proposition}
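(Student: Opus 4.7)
The plan is to derive the Euler--Lagrange equation for $\mu_\gamma$ by a standard first-order variation, and then to bootstrap the regularity of $\mu_\gamma$ using Assumption~\ref{ass:1}.3) to deduce continuity of the potential.

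\textbf{Step 1 (absolute continuity).} Since $\F_\gamma$ attains its minimum and $\F_\gamma(\nu_\gamma) < \infty$ by the local integrability of $\g$ asserted after \eqref{ZUB}, any minimizer satisfies $\F_\gamma(\mu_\gamma) < \infty$. In particular $\Hi(\mu_\gamma \mid \nu_\gamma) < \infty$, so $\mu_\gamma \ll \nu_\gamma \ll \omega$; write $\mathrm{d}\mu_\gamma = h\, \mathrm{d}\omega$, and note from \eqref{F2} that $\int \vartheta\, \mathrm{d}\mu_\gamma < \infty$.

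\textbf{Step 2 (first variation).} Take $\phi \in C_c(\S_V)$, supported in $\{h > 0\}$, with $\int \phi\, \mathrm{d}\omega = 0$, and consider the perturbed density $h_t := h + t\phi$, which remains a probability density for $|t|$ small. I would differentiate $\F_\gamma(h_t\omega)$ at $t=0$ using \eqref{F2}: the energy contributes $\gamma \int \U^{\mu_\gamma}\phi\, \mathrm{d}\omega$ (using symmetry of $\g$ and local integrability on $\supp\phi$), and the entropy contributes $\int (V + \log h)\phi\, \mathrm{d}\omega$. The minimality of $\mu_\gamma$ then gives
\[
\int \bigl(\gamma\, \U^{\mu_\gamma}(x) + V(x) + \log h(x)\bigr)\phi(x)\,\omega(\mathrm{d}x) = 0
\]
for every admissible $\phi$, so there is a constant $\lambda \in \R$ with $\gamma\, \U^{\mu_\gamma} + V + \log h = \lambda$ pointwise $\omega$-a.e.\ on $\{h>0\}\cap\S_V$.

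\textbf{Step 3 (full support and regularity bound).} I would rule out $\omega(\{h=0\}\cap\S_V) > 0$ by a spreading-mass argument: if $A \subset \{h=0\}\cap\S_V$ had positive $\omega$-measure, then transporting mass $\varepsilon$ onto $A$ from $\{h>0\}$ produces an energy change of order $\varepsilon$ (finite thanks to local integrability of $\g$ and continuity of $V$ on $\S_V$), while the entropy of the perturbed measure gains a term of order $\varepsilon \log \varepsilon < 0$, contradicting minimality for small $\varepsilon$. Consequently $h(x) = \mathrm{C}_\gamma^{-1} e^{-\gamma\, \U^{\mu_\gamma}(x) - V(x)}$ $\omega$-a.e.\ on $\S_V$. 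Combined with \eqref{def:vartheta}, this yields $\U^{\mu_\gamma}(x) \ge -\vartheta(x) - \int \vartheta\, \mathrm{d}\mu_\gamma$, so $h \ll e^{-\widetilde{V}_\gamma}$. In particular $h \in L^\infty(\omega) \cap L^1(\omega) \subset L^q(\omega)$ for every $q \in [1,\infty]$.

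\textbf{Step 4 (continuity of $\U^{\mu_\gamma}$).} Using the decomposition $\g = \g_k + \g^k$ from Assumption~\ref{ass:1}.3), the map $x \mapsto \int \g_k(x,z) h(z)\, \omega(\mathrm{d}z)$ is continuous on $\X$ by dominated convergence (as $\g_k$ is continuous and bounded), while H\"older's inequality gives
\[
\sup_{x\in\X}\int \g^k(x,z) h(z)\, \omega(\mathrm{d}z) \le \mathrm{c}_k^{1/p}\, \|h\|_{L^q(\omega)} \xrightarrow[k\to\infty]{} 0.
\]
Hence $\U^{\mu_\gamma}$ is a uniform limit of continuous functions, hence continuous, and the formula \eqref{EL3} holds at every $x\in\X$ (both sides vanishing on $\X\setminus\S_V$). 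The main obstacle is Step 3: justifying the spreading-mass perturbation requires controlling the energy increment carefully and confirming that the trial density $h_t$ still satisfies $\int \vartheta\, h_t\, \mathrm{d}\omega < \infty$, which is where the flexibility afforded by Assumption~\ref{ass:2} (valid for all $\kappa\ge 0$) is needed.
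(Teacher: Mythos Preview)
Your approach mirrors the paper's almost exactly: establish absolute continuity, derive the Euler--Lagrange equation by a first-order variation, prove full support via an $\epsilon\log\epsilon$ spreading-mass argument, then bootstrap continuity of $\U^{\mu_\gamma}$ using the decomposition $\g=\g_k+\g^k$. The paper only reorders Steps~2 and~3 (it proves full support first) and uses a \emph{multiplicative} perturbation $(1+\epsilon\phi)\mu_\gamma$ with $|\phi|\le 1$ rather than your additive one.

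Two technical points need repair. First, your additive perturbation $h_t=h+t\phi$ need not stay nonnegative for two-sided $t$: at this stage $h$ is only defined a.e.\ and has no positive lower bound on $\supp\phi$, so ``remains a probability density for $|t|$ small'' is unjustified. The paper's multiplicative choice avoids this; alternatively you can restrict to $\{h>\delta\}$ or use one-sided variations. Second, in Step~4 you invoke dominated convergence because ``$\g_k$ is continuous and bounded'', but Assumption~\ref{ass:1}.3) only gives $\g_k\le k$, not a lower bound. For kernels like the logarithm one has instead $|\g_k(u,x)|\le k+\vartheta(u)+\vartheta(x)$ via \eqref{def:vartheta}, and the dominating function $(k+\vartheta(\cdot))h\in L^1(\omega)$ comes from $\int\vartheta\,\d\mu_\gamma<\infty$, which you already noted in Step~1. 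With these two fixes your argument is complete and matches the paper's.
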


\begin{proof}
Since $\gamma \ge 0$ is fixed, we denote $\widetilde{V} = \widetilde{V}_\gamma $ and $\nu= \nu_\gamma$ to simplify notation.  
Without loss of generality, we can also assume that $\widetilde{V} \ge 0$. 
Then, observe  that if $\mu$ is absolutely continuous with respect to $\omega$, by  \eqref{def:ent} and \eqref{def:mu0}, we have 
\begin{equation} \label{EL0}
 \Hi(\mu|\nu) = \int  \mu \log \mu\, \d \omega + \int \widetilde{V}  \d \mu  + \log \mathrm{C}_{\gamma} , 
\end{equation}
where both sides could be $+\infty$. 
Moreover, under the Assumptions~\ref{ass:2}, we have $\widetilde\En(\nu) <+\infty$ so that
\[
\F_\gamma(\mu_\gamma) \le \F_\gamma(\nu) = \tfrac{\gamma}{2} \widetilde\En(\nu) <+\infty . 
\]
Consequently, $\mu_\gamma$ is absolutely continuous with respect to $\nu$ and we denote by $\mu_\gamma$  its density with respect to the reference measure $\omega$. 

\paragraph{Step 1.}
 Let us verify that $\omega(x\in\S_V: \mu_\gamma(x) = 0) =0$. Otherwise, we can choose a bounded measurable set $\mathcal{A} \subset \{x\in \S_V : \mu_\gamma(x) = 0\}$ such that $\omega(\mathcal{A})>0$. Then, $\mu = (1-\epsilon) \mu_\gamma + \epsilon'\1_{\mathcal{A}} $ is a probability density for every $0<\epsilon<1$ where $\epsilon' = \epsilon/\omega(\mathcal{A})$ and we verify from \eqref{def:F} and  \eqref{EL0} that 
\[ \begin{aligned}
\F_\gamma(\mu) & = \frac{\gamma}{2} \widetilde\En(\mu) + (1-\epsilon) \int_{\mathcal{A}^c} \big(\widetilde{V} + \log \mu\big) \, \d\mu_\gamma
+ \epsilon \log(\epsilon')  +  \epsilon' \int_{\mathcal{A}} \widetilde{V}  \d \omega   + \log \mathrm{C}_\gamma  \\
& = \F_\gamma(\mu_\gamma) +  \epsilon \log \epsilon  +\O(\epsilon) , 
\end{aligned}\]
where we used that $\widetilde\En(\mu)  =  \widetilde\En(\mu_\gamma)  + \O(\epsilon) $ and $\displaystyle \int_{\mathcal{A}^c} \big(\widetilde{V} + \log \mu\big) \, \d\mu_\gamma =  \int \big(\widetilde{V} + \log \mu_\gamma\big) \, \d\mu_\gamma + \log(1-\epsilon) \mu_\gamma(\mathcal{A}^c)$. 
If $\epsilon>0$ is sufficiently small, this leads to $\F_\gamma(\mu) <  \F_\gamma(\mu_\gamma)$, which is clearly a contradiction. 
This shows that any minimizer $\mu_\gamma$ is equivalent to the reference measure $\mu_0$. 

\paragraph{Step 2.}  Let us derive the so--called \emph{Euler--Lagrange equation}. Fix a function $\phi:\X \to [-1,1]$ such that $\int \phi \d \mu_\gamma =0 $ and $\mu= (1+ \epsilon \phi) \mu_\gamma$ is a probability density for any $\epsilon \in (-1,1)$. Then, we verify that
\begin{equation} \label{EL1}
\frac{\d }{\d \epsilon} \F_\gamma(\mu) \Big|_{\epsilon=0}  = \int  \big(\gamma  \widetilde\U +\widetilde{V}  + \log \mu_\gamma \big) \phi\, \d\mu_\gamma 
\end{equation}
where $\displaystyle \widetilde\U(u) = \int  \widetilde\g(u,x) \mu_\gamma(\d x)$ and we used that 
\[
\iint \widetilde\g(u,v) |\phi(u)| |\phi(v)|   \mu_\gamma(\d u) \mu_\gamma(\d v) \le \widetilde\En(\mu_\gamma) <+\infty . 
\]
It easily follows from \eqref{EL1} and the fact that $\mu_\gamma$ is a minimizer of the free energy $ \F_\gamma$ that there exists a constant $\Upsilon_\gamma  \in \R$ such that 
\begin{equation} \label{EL2}
\mu_\gamma\big( x\in \X : \gamma  \widetilde\U(x) +\widetilde{V}(x) + \log \mu_\gamma(x) = \Upsilon_\gamma  \big)   = 0 .
\end{equation}

\paragraph{Step 3.}  
Let us show that the equation \eqref{EL2} holds for all $x\in\X$. 
Since $\widetilde\U \ge 0$, \eqref{EL2} implies that $\mu_\gamma(x) \ll \nu(x)$ for  $\omega$ almost every $x\in\X$, by {\bf Step 1.} 
Since $\displaystyle \int \vartheta(x) \nu(\d x) <+\infty $, this implies that for all $ u \in\X$, 
\begin{equation} \label{widetildeU}
\widetilde\U(u) =  \U^{\mu_\gamma}(u)   + \vartheta(u) + \int \vartheta(x) \mu_\gamma(\d x) 
\end{equation}
where both sides could (a priori) be $+\infty$. 
However, we can easily infer from the Assumptions~\ref{ass:1}.1)3) that the equilibrium potential $\U^{\mu_\gamma}$ is continuous. 
Indeed, if $u_\ell$ is any sequence which converges to $u$ in $\X$ as $\ell \to+\infty$, then for any $k,\ell \in \N$, 
\[ \begin{aligned}
\big|\U^{\mu_\gamma}(u_\ell) - \U^{\mu_\gamma}(u) \big| &  \ll 
\int \big|\g_k(u_\ell,x) - \g_k(u,x) \big|  \mu_\gamma(\d x) +
 \sup_{u\in\X} \bigg\{ \int \g^k(u,x) e^{-\widetilde{V} (x)} \omega(\d x)  \bigg\} \\
 &\ll \int \big|\g_k(u_\ell,x) - \g_k(u,x) \big|  \nu(\d x) +  \mathrm{c}_k  \bigg(\int e^{-\widetilde{V} (x)} \omega(\d x) \bigg)^{1/q}
\end{aligned}\]
where $q= \frac{p}{p-1} > 1$ and we used H\"older's inequality. Since the functions $\g_k$ are continuous and $|\g_k(x,u)| \le  k + \vartheta(x) + \vartheta(u)$, by the dominated convergence theorem, this implies that for any $k\in\N$, 
\[
\limsup_{\ell\to+\infty}\big|\U^{\mu_\gamma}(u_\ell) - \U^{\mu_\gamma}(u) \big|  \ll  \mathrm{c}_k. 
\]
As $ \mathrm{c}_k \to0$ as $k\to+\infty$, this proves the continuity of  $\U^{\mu_\gamma}$ and by \eqref{widetildeU}, the function $\widetilde\U$ is also continuous on $\X$. 
From \eqref{EL2} and since $\widetilde{V}$ is continuous, this shows that $\log \mu_\gamma$ equals to a continuous function almost everywhere. This establishes that  the equilibrium density $\mu_\gamma$ satisfies the following equation for all $x\in\X$, 
\[
\mu_\gamma(x) = e^{\Upsilon_\gamma- \gamma  \widetilde\U(x) -\widetilde{V}(x) } . 
\]
By \eqref{widetildeU} and since $\widetilde{V}(x)  = V(x) - \gamma \vartheta(x)$, we obtain that the density $\mu_\gamma$ satisfies the equation \eqref{eq:mu}. This completes the proof. 
Let us observe that  the  Assumptions~\ref{ass:2}, the equilibrium potential satisfies for all $u\in\X$,
\begin{equation} \label{Uest}
 \begin{aligned}
\U^{\mu_\gamma}(u) & \le 1+   \mathrm{c}_1 \\
- \U^{\mu_\gamma}(u) & \le \vartheta(u) +   c_\gamma \int \vartheta(x) \nu_\gamma(\d x) 
\end{aligned}
\end{equation}
where we have used  \eqref{def:vartheta}.
This justifies  the estimate \eqref{muk:tail}. 
\end{proof}

\paragraph{Large deviations.}
We now turn to the proof of the large deviation principle for the empirical measure.
Under the Assumptions~\ref{ass:1} $(\vartheta=0)$, the energy $\Ha_N(\x) \ge 0$ for any configuration $\x\in\X^N$ and the function \eqref{def:E} is lower semicontinuous. Then, the large deviation principle follows readily from  \cite[Corollary 1.3 and Section 2]{GZ18}. 
If the interaction kernel $\g$ is not bounded below, the situation is slightly more complicated and we give a few details for the convenience of the readers.

\begin{proposition}
Suppose that $\gamma > 0$  and that the  Assumptions~\ref{ass:2} hold. 
Under $\P_N$, the sequence of empirical measures $\emp_N$ satisfies a large deviation principle with speed $\beta N^2$  and rate function $ \gamma^{-1}\F_\gamma$ (up to a constant). 
\end{proposition}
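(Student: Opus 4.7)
The plan is to reduce the assertion to a standard interacting Gibbs LDP in the spirit of \cite{GZ18}, with the main technical point being to absorb the unbounded--below part of $\g$ into the reference measure. Using \eqref{def:vartheta}, \eqref{def:mu0} and the identity $\beta\sum_{i<j}\g(x_i,x_j)+\sum_j V(x_j)=\beta\sum_{i<j}\tilde\g(x_i,x_j)+\sum_j \widetilde V_\gamma(x_j)+\epsilon_N\sum_j\vartheta(x_j)$ with $\epsilon_N:=\gamma-\beta(N-1)=o(1)$, one rewrites
\[
\P_N(\d\x) = \frac{\mathrm{C}_\gamma^N}{\Z_N^{(V)}}\exp\Big(-\beta\sum_{1\le i<j\le N}\tilde\g(x_i,x_j)+\epsilon_N\sum_{j=1}^N\vartheta(x_j)\Big)\prod_{j=1}^N\nu_\gamma(\d x_j).
\]
The bound \eqref{UB2} gives exponential moments for $\sum_j\vartheta(x_j)$ at scale $N$, so at speed $\beta N^2\sim\gamma N$ the $\epsilon_N$--term contributes nothing, and one is left to prove an LDP for the weighted Gibbs measure proportional to $\exp(-\beta\sum_{i<j}\tilde\g)\,\nu_\gamma^{\otimes N}$ whose kernel $\tilde\g\ge0$ is lower semicontinuous -- this is precisely the setting of \cite{GZ18}.

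For the upper bound, I would truncate $\tilde\g_M:=\tilde\g\wedge M$ and use $\beta\sum_{i<j}\tilde\g(x_i,x_j)\ge\frac{\beta N^2}{2}\widetilde\En_M(\emp_N)-\frac{\beta NM}{2}$, where $\widetilde\En_M(\mu):=\iint\tilde\g_M\,\d\mu\d\mu$ is bounded and continuous on $\M(\X)$ for the weak topology. Combining Sanov's theorem at speed $N$ for $\nu_\gamma^{\otimes N}$ with Varadhan's lemma applied to the continuous functional $\widetilde\En_M$ yields, for every closed $\mathcal C\subset\M(\X)$,
\[
\limsup_{N\to+\infty}\frac{1}{\beta N^2}\log\P_N[\emp_N\in\mathcal C]\le-\gamma^{-1}\inf_{\mu\in\mathcal C}\Big\{\tfrac{\gamma}{2}\widetilde\En_M(\mu)+\Hi(\mu|\nu_\gamma)\Big\}+c,
\]
and monotone convergence as $M\to+\infty$ promotes $\widetilde\En_M$ to $\widetilde\En$, producing the rate function $\gamma^{-1}\F_\gamma$. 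For the matching lower bound, fix $\mu$ with $\F_\gamma(\mu)<+\infty$; by Proposition~\ref{prop:regularity} and a standard convolution argument on local charts, $\mu$ is approximated in the weak topology by probability measures $\mu_\epsilon$ with bounded continuous density such that $\widetilde\En(\mu_\epsilon)\to\widetilde\En(\mu)$ and $\Hi(\mu_\epsilon|\nu_\gamma)\to\Hi(\mu|\nu_\gamma)$. Restricting the integral defining $\P_N[\emp_N\in B]$ to configurations $\emp_N$ in a small weak neighborhood of $\mu_\epsilon$, the boundedness of the density allows one to control $\beta\sum_{i<j}\tilde\g$ from above by $\frac{\beta N^2}{2}\widetilde\En(\mu_\epsilon)+o(\beta N^2)$; Sanov's lower bound for $\nu_\gamma^{\otimes N}$ then delivers the entropic factor $\Hi(\mu_\epsilon|\nu_\gamma)$, and one concludes by sending $\epsilon\to0$.

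Exponential tightness at speed $\beta N^2$ follows from \eqref{UB2} applied to $\widetilde V_\kappa$, which forces $\emp_N$ to concentrate on compact subsets of $\S_V$ with overwhelming probability. The main obstacle is the lower bound: producing smooth approximants $\mu_\epsilon$ with finite weighted energy whose entropy and energy both converge to those of $\mu$, and showing that replacing $\tilde\g$ by its cut-off $\tilde\g_M$ in the empirical energy costs nothing once $\emp_N$ has a density close to that of $\mu_\epsilon$; here Assumptions~\ref{ass:2}.3) play a crucial role through the uniform $L^p$ control \eqref{estimate4}. The additive constant in the rate function is then fixed by the asymptotics of $-(\beta N^2)^{-1}\log\Z_N^{(V)}$, which fall out by comparing the upper bound at $\mathcal C=\M(\X)$ with the lower bound evaluated at any minimizer of $\F_\gamma$.
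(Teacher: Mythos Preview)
Your reduction is the same as the paper's: rewrite $\P_N$ against the reference $\nu_\gamma^{\otimes N}$ with the nonnegative kernel $\tilde\g$, and argue that the residual $\epsilon_N\sum_j\vartheta(x_j)$ is negligible at speed $\beta N^2$. From that point the two arguments diverge. The paper does not redo the Sanov--Varadhan machinery at all: it simply applies \cite[Theorem~1.2 and Corollary~1.3]{GZ18} as a black box, so the entire proof reduces to checking the two \emph{macroscopic limit} conditions \cite[(A.1)--(A.2)]{GZ19} for the functional
\[
\W_N(\x)=N^{-2}\sum_{i<j}\tilde\g(x_i,x_j)+\tfrac{\gamma-\beta(N-1)}{\beta N^2}\sum_j\vartheta(x_j).
\]
The upper--limit condition is a one--line expectation computation; the lower--limit condition is obtained by truncating $\tilde\g\wedge\epsilon^{-1}$ and applying Portmanteau along any sequence $\emp_N^{(\x)}\to\mu$. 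This is much shorter than your outline, which in effect reproves the content of \cite{GZ18}.

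Your route is sound in spirit, but two points need care. First, your appeal to Proposition~\ref{prop:regularity} in the lower bound is misplaced: that proposition concerns only the \emph{minimizers} of $\F_\gamma$, not an arbitrary $\mu$ with $\F_\gamma(\mu)<+\infty$, so it gives you nothing toward the approximants $\mu_\epsilon$. The mollification--on--charts argument must be carried out independently, and arranging \emph{both} $\widetilde\En(\mu_\epsilon)\to\widetilde\En(\mu)$ and $\Hi(\mu_\epsilon|\nu_\gamma)\to\Hi(\mu|\nu_\gamma)$ on a general manifold is exactly the substantive work hidden in the \cite{GZ18} lower bound. Second, bounding $\beta\sum_{i<j}\tilde\g$ from above by $\tfrac{\beta N^2}{2}\widetilde\En(\mu_\epsilon)+o(\beta N^2)$ on a weak neighborhood of $\mu_\epsilon$ is not automatic for a singular kernel; you allude to using the bounded density of $\mu_\epsilon$ and Assumption~\ref{ass:1}.3), which is the right idea, but this step is where the work lies. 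The paper sidesteps all of this by citing \cite{GZ18} directly.
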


\begin{proof}
Let us observe that by  \eqref{def:Gibbs} and \eqref{def:mu0}, we can rewrite
\[
\P_N[\d \x ]  : =  \frac{ e^{- \beta N^2 \W_N(\x)}}{\Z_N'}  \prod_{j=1}^N \nu_\gamma(\d x_j), 
\]
where $0<\Z_N'<+\infty$ and 
\[
 \W_N(\x) = N^{-2} \sum_{1\le i <j \le N} \widehat\g(x_i ,x_j) + \frac{\gamma-  \beta(N-1)}{\beta N^2} \sum_{1\le j \le N} \vartheta(x_j) .
\]
Then, according to  \cite[Theorem 1.2]{GZ18}, we just need to verify that $\widehat\W_\gamma$ is the \emph{positive temperature macroscopic limit} of $\W_N$. 
Fix a small $\epsilon>0$. We can assume that $N$ is large enough so that  $ (1-\epsilon) \gamma \le \beta(N-1) \le (1+\epsilon) \gamma$. 
Then, an immediate computation shows that for any $\mu\in\M(\X)$, 
\[ \begin{aligned}
\int \W_N(\x) {\textstyle \prod_{j=1}^N} \mu(\d x_j)  & = \frac{N-1}{2N} \iint  \widehat\g(u,v) \mu(\d u) \mu(\d v)  +  \frac{\gamma-  \beta(N-1)}{\beta N}  \int \vartheta(u)  \mu(\d u)   \\
&\le  \frac{1}{2} \iint  \widehat\g(u,v) \mu(\d u) \mu(\d v)  +  \frac{\epsilon}{1-\epsilon} \int \vartheta(u)  \mu(\d u)  . 
\end{aligned}\]
This implies that $\W_N$ satisfies the \emph{upper limit assumption} from \cite[(A.2)]{GZ19}: for any $\mu\in\M(\X)$, 
\[
\limsup_{N\to+\infty} \int \W_N(\x) {\textstyle \prod_{j=1}^N} \mu(\d x_j) \le  \gamma^{-1} \widetilde\En(\mu). 
\]
Similarly, we verify that
\[ \begin{aligned}
 \W_N(\x)
 & \ge  N^{-2} \sum_{1\le i <j \le N} \widehat\g(x_i ,x_j) \wedge \epsilon^{-1} - \frac{\epsilon}{(1-\epsilon)N}   \sum_{1\le j \le N} \vartheta(x_j) \\
&= \frac{1}{2} \iint  \widehat\g(u,v) \wedge \epsilon^{-1}\, \emp_N^{(\x)}(\d u) \emp_N^{(\x)}(\d v)
-   \frac{\epsilon}{1-\epsilon}   \int \vartheta(u) \emp_N^{(\x)}(\d u) 
- \frac{\epsilon^{-1}}{2N} ,
\end{aligned}\]
where the last term comes from the diagonal. 
Then, for any given $\mu \in \M(\X)$, if $\x^{(N)}$ is a sequence of configurations such that  the empirical measure $\emp_N^{(\x)} \to \mu$ as $N\to+\infty$, since $\widehat\g$ and $\vartheta$ are lower semicontinuous functions, by the Portmanteau Theorem,  the previous bound implies that
\[
\liminf_{N\to+\infty} \W_N(\x) \ge \frac{1}{2} \iint  \widehat\g(u,v) \wedge \epsilon^{-1} \mu(\d u) \mu(\d v) -   \frac{\epsilon}{1-\epsilon}   \int \vartheta(u)  \mu(\d u)  .
\]
As $\epsilon>0$ is arbitrary, we obtain that $\W_N$ satisfies the \emph{lower limit assumption} from \cite[(A.1)]{GZ19}: under the above conditions, 
\[
\liminf_{N\to+\infty} \W_N(\x) \ge  \widehat\En(\mu) .
\]
Since $\beta N \to \gamma \in (0,+\infty)$, by \cite[Corollary 1.3]{GZ19}, this completes the proof. 
\end{proof}

From the previous large deviation principle, it follows that if the free energy $\F_\gamma$ has a unique minimizer $\mu_\gamma \in \M(\X)$, then $\emp_N \to \mu_\gamma$ in probability  as  $N\to+ \infty$.
Together with Proposition~\ref{prop:regularity}, this completes the proof of Proposition~\ref{prop:eq}.

\medskip

\paragraph{Convexity of the free energy.}
Finally, let us quickly discuss the issue of uniqueness of the minimizer of the free energy \eqref{def:F} in the case of the Riesz and log gases. Once more, the following Lemmas are  classical. 

\begin{lemma} \label{lem:Rminimizer}
Let $\g_\s$ be as in \eqref{Riesz}, $\mathscr{Q} := \big\{ f\in L^1(\R^n) : \En(c|f|) <+\infty \text{ where } \displaystyle c = \int|f(x)| \d x\big\}$ and 
\vspace{-.3cm}
\[
\En(f) =  \iint \g_\s(x,z) f(z) f(x)\d z\d x    , \qquad f\in \mathscr{Q} . 
\] 
Then $\En$ is non--negative and strictly convex on $\mathscr{Q}$. 
Moroever, for any $\gamma\ge 0$, the free energy $\F_\gamma(\mu) = \frac\gamma2  \En(\mu) + \Hi(\mu|\mu_0)$ for $\mu \in\M(\R^n)$ has a unique minimizer $\mu_\gamma \in\M(\R^n)$.
\end{lemma}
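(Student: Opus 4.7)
The plan rests on the classical Fourier representation of the Riesz kernel. For any $\s \in (0,n)$, one has $\widehat{|\cdot|^{-\s}}(\xi) = c_{n,\s}\, |\xi|^{\s-n}$ in the sense of tempered distributions, with $c_{n,\s}>0$. My first step is to establish the identity
\begin{equation} \label{Rfourier}
\En(f) = c_{n,\s} \int_{\R^n} |\widehat{f}(\xi)|^2 |\xi|^{\s-n} \d\xi , \qquad f \in \mathscr{Q}.
\end{equation}
For Schwartz functions this is just Parseval applied to $\En(f) = \langle f , |\cdot|^{-\s} \ast f\rangle$. To pass to $\mathscr{Q}$, I would cut the kernel off near the diagonal by $\g_\s^\delta := \g_\s \wedge \delta^{-\s}$, approximate $f \in \mathscr{Q}$ by Schwartz functions $f_k \to f$ in $L^1$, and combine monotone convergence in $\delta$ with Fatou on the Fourier side, using $\En(|f|)<+\infty$ as the integrability reservoir that controls the truncation errors.

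Granting \eqref{Rfourier}, non-negativity of $\En$ is immediate, and since $|\xi|^{\s-n}>0$ almost everywhere, $\En(f)=0$ forces $\widehat{f}\equiv 0$ and hence $f\equiv 0$. Applied to $f-g$ with $f,g \in \mathscr{Q}$, this shows that the bilinear form $\langle f,g\rangle_\En := \iint \g_\s(x,z) f(x) g(z)\, \d x\, \d z$ is an inner product on $\mathscr{Q}$. Strict convexity of $\En$ then follows from the elementary identity
\[
t\, \En(f) + (1-t)\, \En(g) - \En\bigl(tf+(1-t)g\bigr) = t(1-t)\, \En(f-g), \qquad t\in[0,1],
\]
whose right-hand side is strictly positive whenever $f \neq g$.

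For the second claim, the relative entropy $\Hi(\cdot|\mu_0)$ is strictly convex by Lemma~\ref{lem:entropy} and $\En$ is convex on probability densities lying in $\mathscr{Q}$, so $\F_\gamma = \tfrac{\gamma}{2}\En + \Hi(\cdot|\mu_0)$ is strictly convex on its proper domain $\{\F_\gamma<+\infty\}$. Existence of a minimizer follows by the direct method: $\F_\gamma$ is lower semicontinuous, and its sublevel sets are contained in those of $\Hi(\cdot|\mu_0)$, which are compact by Lemma~\ref{lem:entropy}. Strict convexity then rules out two distinct minimizers.

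The main obstacle is the rigorous derivation of \eqref{Rfourier} on all of $\mathscr{Q}$: one must simultaneously control the singularity of $\g_\s$ along the diagonal and, when $\s>n/2$, the singularity of $|\xi|^{\s-n}$ at the origin. The cleanest workaround is to observe that $\mathscr{Q}$ is a pre-Hilbert space under $\langle\cdot,\cdot\rangle_\En$, to verify \eqref{Rfourier} on a Riesz-energy-dense subset of Schwartz functions, and to pass to the limit by continuity of both sides of \eqref{Rfourier} in the Riesz-energy norm.
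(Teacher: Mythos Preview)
Your proposal is correct and follows essentially the same route as the paper. The paper defers the non-negativity and strict convexity of $\En$ to \cite[Theorem~9.8]{LL01}, whose proof is precisely the Fourier representation you sketch in \eqref{Rfourier}, and then deduces uniqueness of the minimizer from the strict convexity of both $\En$ and $\Hi(\cdot|\mu_0)$ together with lower semicontinuity of $\F_\gamma$, exactly as you do.
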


\begin{proof}
The first claims regarding the positivity and convexity of  the energy $\En$ follow from \cite[Theorem 9.8]{LL01} -- This Theorem is stated and proved only in the Coulomb case $\s =n-2$ for $n\ge 3$, but these properties are true for any $n\in\N$ and $\s\in(0,n)$ with the same proof.
We already noticed that the function $\F_\gamma$ is lower--semicontinuous and positive on $\M(\R^n)$, so it attains its minimum and any minimizer $\mu$ is absolutely continuous with respect to $\mu_0$ (otherwise $\F_\gamma(\mu) = +\infty$). In particular, $\mu\in \mathscr{Q}$, so that by strictly convexity of both $\En$ and $\Hi(\cdot|\mu_0)$ (see Lemma~\ref{lem:entropy}) on $\mathscr{Q}$, we conclude that the minimizer of the free energy $\F_\gamma$ is unique. 
\end{proof}

In the following Lemma, we assume that $n=1\text{ or }2$,  $\g$ and $\vartheta$ are as in \eqref{logkernel}. 

\begin{lemma} \label{lem:Lminimizer}
Let $\mathscr{Q} = \bigg\{ f\in L^1(\vartheta) : \displaystyle\iint \big| \log|x-z|^{-1} f(z) f(x) \big| \d z\d x<+\infty\bigg\}$ and 
\[
\En(f) =  \iint \log|x-z|^{-1} f(z) f(x)\d z\d x    , \qquad f\in \mathscr{Q} . 
\] 
Then $\En(f) \ge 0$ for all $f\in\mathscr{Q}\cap\big\{ \int f \d x = 0 \big\}$ and $\En$ is strictly convex on $\mathscr{Q}\cap\M(\R^n)$. 
Moreover, under the Assumptions~\ref{ass:2}, the free energy \eqref{def:F} has a unique minimizer $\mu_\gamma \in\M(\R^n)$. 
\end{lemma}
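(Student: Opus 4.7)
The plan has three stages: first establish non-negativity of $\En$ on mean-zero signed measures via Fourier analysis; then deduce strict convexity on $\M(\R^n)\cap\mathscr{Q}$ by polarization; finally conclude uniqueness of the minimizer from strict convexity of the free energy.

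For non-negativity, I would exploit the classical fact that in dimensions $n\le 2$ the log kernel is positive definite modulo constants: the distributional Fourier transform of $\log|x|^{-1}$ on $\R^n$ is $c_n|\xi|^{-n}$ plus a multiple of $\delta_0$, with $c_n>0$. For any compactly supported signed measure $\sigma$ with $\int d\sigma = 0$ and $\En(|\sigma|)<+\infty$, Parseval then yields
\[
\En(\sigma) = c_n\int_{\R^n}|\widehat\sigma(\xi)|^2\,\frac{d\xi}{|\xi|^n} \ge 0,
\]
with equality iff $\sigma=0$. For general $f\in\mathscr{Q}$ with $\int f\,dx = 0$, I would approximate by $f_R = f\1_{B_R} - c_R\rho$, where $c_R = \int_{B_R^c}f\,dx\to 0$ and $\rho\ge 0$ is a fixed bump of mass one, and pass to the limit using the integrability controls built into the definition of $\mathscr{Q}$. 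Strict convexity on $\M(\R^n)\cap\mathscr{Q}$ then follows from the polarization identity $\En(t\mu+(1-t)\nu) = t\En(\mu)+(1-t)\En(\nu) - t(1-t)\En(\mu-\nu)$: the difference $\mu-\nu$ is a mean-zero signed measure whose density lies in $\mathscr{Q}$ (since $|\mu-\nu|\le\mu+\nu$), so by the previous step $\En(\mu-\nu)\ge 0$ with equality only if $\mu=\nu$.

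For uniqueness of the minimizer of $\F_\gamma$, existence follows from the lower semicontinuity of $\F_\gamma$ together with the compact sublevels of $\Hi(\cdot|\nu_\gamma)$ (Lemma~\ref{lem:entropy}). Any minimizer $\mu_\gamma$ has finite relative entropy with respect to $\nu_\gamma$, hence is absolutely continuous; a Donsker--Varadhan type bound, combined with the fact that $\widetilde V_{\gamma+\epsilon}$ still satisfies Assumption~\ref{ass:1}.1) for small $\epsilon>0$ (guaranteed by Assumption~\ref{ass:2}), gives $\int\vartheta\,d\mu_\gamma<+\infty$, so by Proposition~\ref{prop:regularity} and \eqref{F2} one may rewrite $\F_\gamma(\mu) = \tfrac{\gamma}{2}\En(\mu)+\Hi(\mu|\mu_0)+\mathrm{const}$ on the domain where $\F_\gamma$ is finite. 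Strict convexity of both summands (by the previous step and Lemma~\ref{lem:entropy}) forces uniqueness. The hard part will be the first step: the log kernel is unbounded both near the diagonal and at infinity and is only conditionally positive definite, so the approximation scheme must simultaneously preserve the zero-mass condition and control the cross terms involving $\rho$, which ultimately rests on the uniform integrability encoded in the definition of $\mathscr{Q}$.
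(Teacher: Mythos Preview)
Your proposal is correct and follows essentially the same strategy as the paper: positive definiteness of the log kernel on mean-zero functions (the paper cites Deift and Saff--Totik rather than spelling out the Fourier computation), the polarization identity \eqref{convexity} for strict convexity on $\mathscr{Q}\cap\M(\R^n)$, and then uniqueness from strict convexity once one knows every minimizer lies in $\mathscr{Q}$. The only minor difference is in this last verification: the paper uses the pointwise density bound $\mu_\gamma \ll \nu_\gamma$ coming from Proposition~\ref{prop:regularity} and \eqref{Uest} to place $\mu_\gamma$ in $\mathscr{Q}$ directly, whereas your Donsker--Varadhan argument yields only $\int\vartheta\,d\mu_\gamma<\infty$ --- which, combined with $\widetilde\En(\mu_\gamma)<\infty$, is in fact enough to conclude $\mu_\gamma\in\mathscr{Q}$, though you should make that step explicit.
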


\begin{proof}
It is well--known that if $f\in\mathscr{Q}$, $f$ has compact support and $\int f \d x= 0$, then  $\En(f) \ge 0$ and  
$\En(f) = 0$ if and only if $f=0$. 
For a proof of this claim, we refer to \cite[Lemma 6.41]{Deift99} when $n=1$ and \cite[Lemma 1.8]{ST97} when  $n=2$. 
By a standard approximation argument, we can show that this property holds true for all $f\in\mathscr{Q}$ with $\int f \d x= 0$. 
Moreover, by straightforward algebraic manipulations, we have for any probability density functions $f_0, f_1 \in\mathscr{Q}$  and $t\in[0,1]$,
\begin{equation} \label{convexity}
\En\big( (1-t)f_0+tf_1\big)-   (1-t)\En(f_0)- t\En(f_1) = - t(1-t) \En(f_1-f_0) . 
\end{equation}
This shows that the energy functional $\En$ is strictly convex on $\mathscr{Q}\cap\M(\R^n)$. 
Consequently, by \eqref{def:Ew}, the weighted energy functional  $\widetilde\En$ is also  strictly convex on $\mathscr{Q}\cap\M(\R^n)$. 
Recall that the free energy $\F_\gamma(\mu) = \frac\gamma2 \widetilde\En(\mu) + \Hi(\mu|\nu_\gamma)$ 
attains its minimum and that all minimizer(s) satisfy the equation \eqref{EL3}.
In particular, for all minimizer(s), the density $\mu_\gamma \ll \nu_\gamma$ --  see the estimates \eqref{Uest} at the end of the proof of Proposition~\ref{prop:regularity} -- so that by assumptions,  $\mu_\gamma\in\mathscr{Q}$.
From the strict convexity of $\widetilde\En$ and  $\Hi(\cdot|\nu_\gamma)$ (see Lemma~\ref{lem:entropy}), we conclude that the free energy  $\F_\gamma$ has a unique minimizer. 
\end{proof}

\subsection{Point processes on manifolds} \label{sect:pp}

\paragraph{Definition and correlation functions}
A (simple) point process is a random measure of the form $\Xi = \sum_{\lambda \in \Lambda} \boldsymbol\delta_{\lambda} $ 
where $\Lambda$ is a countable subset of $\X$ with no accumulation points.
We refer to \cite{Kal} for the construction of such random processes. 
The law of a point process is characterized by its Laplace functional
\[
\psi(f) = \E\big[ e^{-\Xi(f)} \big]
 \qquad\text{for all Borel function } f: \X \to [0,+\infty).
\]
We can define the correlation functions $\big( R^{(k)} \big)_{k=1}^{+\infty}$ of the point process $\Xi$ through its Laplace functional:
\begin{equation} \label{def:psi}
\psi(f) =  1 + \sum_{k=1}^{+\infty} \frac{1}{k!} \int_{\X^k} \prod_{i=1}^k \big( e^{-f(x_i)} -1 \big) R^{(k)}(\d x_1,\cdots, \d x_k) . 
\end{equation}
A priori, the $k^{\rm th}$ correlation function $R^{(k)}$ is a measure on  the product space $\X^k$. It turns into a density function if it is absolutely continuous: 
$R^{(k)}(\d x_1,\cdots, \d x_k) = R^{(k)}(x_1,\cdots, x_k) \omega(\d x_1) \cdots \omega(\d x_k)$. 

\medskip

For instance, if  $\Xi_N = \sum_{j=1}^N \boldsymbol\delta_{\lambda_j} $  and $(\lambda_1,\dots, \lambda_N)$ has a (symmetric) joint distribution $\P_N$, then we verify that
\[
\psi_N(f) = \E_N\big[ e^{-\Xi_N(f)} \big]
=1 +\sum_{k=1}^{N} {N \choose k} \int_{\X^N} \prod_{i=1}^k \big( e^{-f(x_i)} -1 \big) \P_N[\d x_1,\dots, \d x_N] ,
\]
so that the correlation functions of the process $\Xi_N$ are given by for $k\in\N$,
\begin{equation} \label{correlations}
R_N^{(k)}(\d x_1,\cdots, \d x_k) = \1_{k\le N} \frac{N!}{(N-k)!} \int_{\X^{N-k}} \P_N[\d x_1,\dots,\d x_N] .
\end{equation}

Another important example is a Poisson point process. 
We will rely on the following definition. 

\begin{definition} \label{def:Poisson}
If $\theta$ is a Radon measure $\X$, we say that  $\Xi$ is a Poisson point process with intensity $\theta$ is its Laplace functional satisfies 
\[
\psi(f) =  \exp\bigg( \int_\X (e^{-f(x)}-1) \theta(\d x) \bigg) . 
\]
This immediately implies that the correlation functions of  $\X$ are given by 
$R^{(k)}(\d x_1,\cdots, \d x_k) = \prod_{i=1}^k \theta(\d x_i)$ for all $k\in\N$.
Moreover, we say that $\Xi$ is a homogeneous Poisson point process with intensity $\theta>0$ if its intensity $\theta$ is constant.  
\end{definition}

\paragraph{Weak convergence.} Let us also review the notion of convergence in distribution for point processes. 

\begin{definition}
We say that a sequence $\Xi_N$ of point processes $($with Laplace functional $\psi_N)$ on $\X$ converges in distribution to a point process $\Xi$ $($with Laplace functional $\psi)$ if for any function $f: \X \to [0,+\infty)$ continuous with compact support, $\psi_N(f) \to \psi(f)$  as $N\to+\infty$. 
\end{definition}

It is also easy to give a necessary condition for the convergence of point processes in terms of its correlation functions. 

\begin{lemma} \label{lem:ppwcvg}
For any $N\in\N \cup \{\infty\}$,  let $\Xi_N$ be a  point process on $\X$ with correlation functions $R_N^{(k)}(x_1,\dots , x_k)$. 
Then, $\Xi_N$ convergence in distribution to $\Xi_\infty$ if for any  $k\in\N$, 
$R_N^{(k)}(\x) \to R_\infty^{(k)}(\x)$ for almost all $\x \in \X^k$ and for any compact set $\mathcal{K}\subset \X$, 
\begin{equation} \label{cintegrability}
\sup_{N\in \N }  \sum_{k=1}^{+\infty} \frac{1}{k!} \int_{\mathcal{K}^k} R_N^{(k)}(x_1,\dots , x_k) \omega(\d x_1) \cdots \omega(\d x_k) <+\infty  . 
\end{equation}
\end{lemma}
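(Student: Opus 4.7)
The plan is to establish convergence of the Laplace functionals: $\psi_N(f) \to \psi_\infty(f)$ for every continuous $f : \X \to [0,+\infty)$ with compact support. By the definition of weak convergence of point processes recalled just above, this is exactly what has to be shown. The method is to expand $\psi_N(f)$ via the formula \eqref{def:psi} and to pass to the limit term by term, splitting the series at some level $K$ and controlling the tail uniformly in $N \in \N \cup \{\infty\}$.

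Fix $f$ with support in a compact set $\mathcal{K}$, and set $g := 1-e^{-f}$, so that $g$ is continuous with support in $\mathcal{K}$ and $0 \le g \le \alpha$, where $\alpha := 1-e^{-\|f\|_\infty} < 1$. In these notations the expansion \eqref{def:psi} reads
\[
\psi_N(f) = 1 + \sum_{k=1}^{+\infty} \frac{(-1)^k}{k!} \int_{\mathcal{K}^k} g(x_1)\cdots g(x_k)\, R_N^{(k)}(\d x_1,\dots,\d x_k),
\]
valid for every $N \in \N \cup \{\infty\}$; that the series makes sense when $N=\infty$ will follow from Fatou's lemma applied to \eqref{cintegrability}. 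Each term is bounded in modulus by $\alpha^k \cdot \frac{1}{k!}\int_{\mathcal{K}^k} R_N^{(k)}$. The hypothesis \eqref{cintegrability} gives $\frac{1}{k!}\int_{\mathcal{K}^k} R_N^{(k)} \le M$ uniformly in $k$ and $N$, so the tail $\sum_{k>K}$ is dominated by $M \sum_{k>K} \alpha^k$, which tends to $0$ as $K\to+\infty$ uniformly in $N$.

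For each fixed $k \le K$, I would then show
\[
\int_{\mathcal{K}^k} g(x_1)\cdots g(x_k)\, R_N^{(k)}(\d\x) \longrightarrow \int_{\mathcal{K}^k} g(x_1)\cdots g(x_k)\, R_\infty^{(k)}(\d\x) \qquad (N \to \infty).
\]
The integrands converge a.e. by hypothesis, and $\prod_i g(x_i)$ is bounded by $\alpha^k$ and supported in the compact set $\mathcal{K}^k$. This step is the main obstacle: at the stated level of generality one needs either local uniform integrability of $\{R_N^{(k)}\}_N$, to apply Vitali's convergence theorem, or a pointwise dominating function to invoke Lebesgue's theorem. In every concrete application in the paper (the proofs of Theorems~\ref{thm:LF}, \ref{thm:limitcase} and \ref{thm:logedge}) one actually has the much stronger uniform estimate $R_N^{(k)}(\x) \le C^{2k}$ on $\mathcal{K}^k$ (see \eqref{estimate9}), so that $|\prod_i g(x_i)| R_N^{(k)} \le \alpha^k C^{2k}$ supplies an integrable dominant and the pointwise a.e. convergence immediately yields the required limit. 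Combining the two pieces — first letting $N\to+\infty$ at fixed $K$, then $K\to+\infty$ — gives $\psi_N(f) \to \psi_\infty(f)$, which finishes the proof.
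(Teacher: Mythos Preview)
Your approach coincides with the paper's: expand the Laplace functional via \eqref{def:psi}, use $|e^{-f}-1|\le \mathbf{1}_{\mathcal{K}}$, and pass to the limit. The paper's entire proof is a single sentence invoking ``the dominated convergence theorem'' without naming a dominant, so you have in fact written out strictly more than the paper does; your tail control via the factor $\alpha^k$ with $\alpha<1$ is a clean detail the paper omits.

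Your caveat about the fixed-$k$ step is well placed and is a genuine subtlety the paper glosses over: pointwise a.e.\ convergence of $R_N^{(k)}$ together with the uniform $L^1$ bound coming from \eqref{cintegrability} does \emph{not} in general force $\int_{\mathcal{K}^k} g^{\otimes k} R_N^{(k)}\to \int_{\mathcal{K}^k} g^{\otimes k} R_\infty^{(k)}$ (take $R_N^{(1)}=N\mathbf{1}_{[0,1/N]}$ on $[0,1]$, giving a single point converging to $\boldsymbol\delta_0$ while $R_\infty^{(1)}\equiv 0$). Your observation that every invocation of the lemma in the paper (the proofs of Theorems~\ref{thm:LF}, \ref{thm:limitcase}, \ref{thm:logedge}) actually establishes the uniform pointwise bound $R_N^{(k)}\le C^{2k}$ on $\mathcal{K}^k$, which supplies the missing integrable dominant, is exactly the right resolution; the paper itself alludes to this immediately after the proof by remarking that uniform convergence of $R_N^{(k)}$ on compacta suffices.
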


\begin{proof}
The conditions of Lemma~\ref{lem:ppwcvg} imply that for any continuous function $f: \X \to [0,+\infty)$ with support in  $\mathcal{K}$,  $\psi_N(f) \to \psi(f)$  as $N\to+\infty$
by the dominated convergence theorem and since $|e^{-f}-1| \le \1_{\mathcal{K}}$. 
\end{proof}

Let us also observe that if $R_N^{(k)}(\x) \to R_\infty^{(k)}(\x)$ uniformly for all $\x$ in compact sets of $\X^k$ and the correlation functions $R_\infty^{(k)}$ are locally integrable, then the condition \eqref{cintegrability} is satisfied.

\paragraph{Change of variables.}
Let us record that if $U \subseteq \X$ is an open set and  $\varphi : U \to \R^n$ is a 1-1 map, then we can define a new point process $\widehat\Xi = \sum_{\lambda \in \Lambda \cap U} \boldsymbol\delta_{\varphi(\lambda)} $  on $\R^n$.
Almost surely, this point process is supported on the set $V =\varphi(U)$ and its correlation functions are given by the push--forward $\varphi_{\sharp} R^{(k)} \big|_U =:\widehat{R}^{(k)}$ for all  $k\ge 1$ (since $ R^{(k)}$ is a measure on $\X^k$, this notation means that $R^{(k)}$ is first restricted to $U^{\times k}$ and the push--forward is defined through the product map $\varphi^{\times k}$).
In particular, if $(\X,g)$ is a Riemannian manifold and $R^{(k)}$ have densities with respect to the volume form $\omega$, this implies that for almost all $\x\in (\R^n)^k$, 
\begin{equation} \label{trans}
\widehat{R}^{(k)}(\x) = R^{(k)}\big|_U\big( \varphi^{-1}(x_1),\cdots, \varphi^{-1}(x_k) \big) 
{\textstyle \prod_{i=1}^k} \mathscr{J}(x_i) \sqrt{\det g(\x)}
\end{equation}
where $\mathscr{J}$ is the Jacobian of $\varphi^{-1}$ and  the density $\widehat{R}^{(k)}$ is defined with respect to the Lebesgue measure on $(\R^n)^k$. 
Observe also that since $R^{(k)}\big|_U(u_1, \dots, u_k)  =  R^{(k)}(u_1, \dots, u_k) \1_{u_1,\dots, u_k \in U}$, the RHS of \eqref{trans} does not depend on how we extend the map $\varphi^{-1}$ outside of $V$ . 
Formula \eqref{trans} can be checked directly from \eqref{def:psi} and the change of variables formula and the fact that for all Borel function $f: \R^n \to [0,+\infty)$,
\[
\widehat\psi(f) = \E\big[ e^{-\widehat\Xi(f)} \big] =  \E\big[ e^{-\Xi(f|_V\circ \varphi)} \big] = \psi(f|_V\circ \varphi) . 
\]

\paragraph{Correlation of the local process.}
Using the notation \eqref{def:rho} and formula \eqref{correlations}, under $\P_N$, the correlation functions between the particles are given by $1_{k\le N} \frac{N!}{(N-k)!}  \rho_N^{(k)}(\u)$ for $\u\in \X^k$. 
Hence, by \eqref{def:Xi} and formula \eqref{trans}, the correlation functions (with respect to the Lebesgue measure) of the local point process $ \Xi_N = \sum_{x_j \in U}  \boldsymbol\delta_{N^{1/n} \varphi(x_j)}$
 which is obtained by zooming at a microscopic scale around the point $E$ are given by for any $k=1,\dots, N$, 
\begin{equation} \label{def:R}
R_N^{(k)}(\x) = \tfrac{N!}{(N-k)!} \rho_N^{(k)}\big|_U(u_1, \cdots  ,  u_k) {\textstyle \prod_{i=1}^k} \mathscr{J}(x_i) \sqrt{\det g(\x)} \, ,
\qquad \x \in (\R^n)^k . 
\end{equation}
where $u_i = \varphi^{-1}(x_i/N^{1/n})$ for $i=1,\dots, k$ and $\mathscr{J}$ denotes the Jacobian of the corresponding map.
Observe that because $\varphi$ is a normal coordinate chart, $u_i = {\rm Exp}_{E}(x_i/N^{1/n})$ and $g(0) = \mathrm{I}_n$, so that it holds as $N\to+\infty$,  
\[
N \mathscr{J}(x_i) \to  1 
\qquad\text{and}\qquad
\det g(\x) \to 1 
\]
uniformly for all $x_1, \dots, x_k$ in compact sets of $\R^n$. 
According to \eqref{def:R}, this shows that the correlation functions of the local process satisfies for any fixed $k\in\N$ as  $N\to+\infty$,  
\begin{equation} \label{R:asymp}
R_N^{(k)}(\x) =  \rho_N^{(k)}\Big|_U\big(\varphi^{-1}(x_1/N^{1/n}), \cdots  ,  \varphi^{-1}(x_k/N^{1/n})\big)
\big( 1+\o(1)  \big)
\end{equation}
uniformly for all $\x$ in a compact set of $(\R^n)^k$.

\paragraph{Edge scaling limit.} First, as a toy example, let us consider the case of $N$ i.i.d.~particles distributed according to a probability density function $\phi(u) = \zeta^{-1} e^{-V(u)}$ where $V \in C^2(\R^n \to [0,+\infty])$ 
and $\zeta>0$ is a normalizing constant. 
Then, the law of the particles is the Gibbs measure \eqref{def:Gibbs} with $\beta=0$ and the equilibrium density and $\phi$.  
This implies that for large $N$, the density of particles decay away from $0$ and we are interested in describing the local limit near the boundary of the droplet. Let us record the following simple Lemma.

\begin{lemma} \label{lem:Poissonedge}
Choose a sequence $(E_N)_{N\in\N}$ in $\R^n$ such that $\nabla V(E_N) = \alpha_N \upsilon_N$ 
where $\alpha_N>0$, $\|\upsilon_N\| =1$  and as $N\to+\infty$, 
\begin{equation} \label{cedge}
\frac{Ne^{-V(E_N)}}{\zeta \alpha_N^n} \to 1 
\qquad\text{and}\qquad \upsilon_N\to \upsilon . 
\end{equation}
Let $\varphi_N(x) = E_N + \alpha_N^{-1}\psi(x) $ where $\psi:\R^n \to \R^n$ is a continuous map with Jacobian 
$\mathscr{J}(\psi)=1$ and suppose that for any compact set $\mathscr{K} \subset \R^n$, as $N\to+\infty$,
\begin{equation} \label{cTaylor}
\alpha_N^{-2} \sup_{x \in \mathscr{K} }\left\| \nabla^2V\big(\varphi_N(x)\big) \right\| \to 0 . 
\end{equation}
Then, the point process $\Xi_N = \sum_{j=1}^N  \boldsymbol\delta_{ \varphi_N^{-1}(x_j)}$  obtained by zooming around the point~$E_N$  converges in distribution  to a Poisson point process with intensity $\theta(x) = e^{-\upsilon \cdot \psi(x)}$ with respect to the Lebesgue measure on $\R^n$. 
\end{lemma}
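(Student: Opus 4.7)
Since the particles are i.i.d.\ with density $\phi(u) = \zeta^{-1} e^{-V(u)}$, the $k^{\rm th}$ correlation function of the global point process $\sum_{j=1}^N \boldsymbol\delta_{x_j}$ with respect to the Lebesgue measure on $\R^n$ equals $\frac{N!}{(N-k)!} \prod_{i=1}^k \phi(u_i)$. The plan is to apply the change of variables formula \eqref{trans} and then invoke Lemma~\ref{lem:ppwcvg}. Since $\varphi_N$ has Jacobian $\alpha_N^{-n} \mathscr{J}(\psi) = \alpha_N^{-n}$ on $\R^n$, the correlation functions of $\Xi_N = \sum_{j=1}^N \boldsymbol\delta_{\varphi_N^{-1}(x_j)}$ are given for any fixed $k\in\N$ and $\x\in(\R^n)^k$ by
\[
R_N^{(k)}(\x) = \tfrac{N!}{(N-k)!}\, \alpha_N^{-kn} \prod_{i=1}^k \phi(\varphi_N(x_i)) .
\]

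The key step is a Taylor expansion of $V$ around $E_N$: since $\varphi_N(x) - E_N = \alpha_N^{-1}\psi(x)$ and $\nabla V(E_N) = \alpha_N \upsilon_N$, we have
\[
V(\varphi_N(x)) = V(E_N) + \upsilon_N \cdot \psi(x) + \tfrac12 \alpha_N^{-2} \psi(x)^T \nabla^2 V(\xi_{N,x}) \psi(x)
\]
for some $\xi_{N,x}$ between $E_N$ and $\varphi_N(x)$. For $x$ in any fixed compact set $\mathscr{K}$, the assumption \eqref{cTaylor} together with $\upsilon_N\to \upsilon$ in \eqref{cedge} shows that the remainder vanishes uniformly in $x\in\mathscr{K}$ as $N\to+\infty$, so that
\[
\frac{N\phi(\varphi_N(x))}{\alpha_N^n} = \frac{Ne^{-V(E_N)}}{\zeta\alpha_N^n}\, e^{-\upsilon_N \cdot \psi(x) + o(1)} \longrightarrow \theta(x) = e^{-\upsilon \cdot \psi(x)}
\]
by the first condition in \eqref{cedge}, uniformly for $x\in\mathscr{K}$. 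Since $\frac{N!}{(N-k)!} \alpha_N^{-kn} \sim (N/\alpha_N^n)^k$ for each fixed $k$, it follows that $R_N^{(k)}(\x) \to \prod_{i=1}^k \theta(x_i)$ uniformly for $\x\in\mathscr{K}^k$.

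The uniform convergence on $\mathscr{K}^k$ yields a uniform bound $R_N^{(k)}(\x) \le C_\mathscr{K}^k$ for $\x\in \mathscr{K}^k$ with $C_\mathscr{K} = 1 + \sup_{x\in\mathscr{K}} \theta(x) < +\infty$, which immediately gives the summability required in the integrability condition \eqref{cintegrability}. An application of Lemma~\ref{lem:ppwcvg} then shows that $\Xi_N$ converges in distribution to a Poisson point process with intensity $\theta$ on $\R^n$. The only real technical point is to make the Taylor remainder bound uniform in $x\in\mathscr{K}$, which is exactly what \eqref{cTaylor} is designed to give; apart from this, the argument is routine and a straightforward template for the more involved Theorems~\ref{thm:limitcase} and \ref{thm:logedge} that follow.
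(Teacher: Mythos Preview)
Your proof is correct and follows essentially the same approach as the paper: compute the correlation functions of $\Xi_N$ via the change of variables formula \eqref{trans}, use a Taylor expansion of $V$ around $E_N$ together with conditions \eqref{cedge}--\eqref{cTaylor} to obtain uniform convergence of $N\alpha_N^{-n}\phi(\varphi_N(x))$ to $\theta(x)$ on compact sets, and conclude via Lemma~\ref{lem:ppwcvg}. Your write-up is in fact slightly more explicit than the paper's, spelling out the Taylor remainder and the uniform bound needed for \eqref{cintegrability}.
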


The proof of Lemma ~\ref{lem:Poissonedge} is an immediate consequence of Lemma~\ref{lem:ppwcvg} and the change of variables formula \eqref{trans}. 
Indeed, since we are in the Euclidean case and $\det\varphi_N = \alpha_N^{-n}$, by \eqref{correlations}, the correlation functions of the  process $\Xi_N$ are given exatcly by for all $k=1,\dots, N$, 
\[
R_N^{(k)}(\x) = \tfrac{N!}{(N-k)!}\zeta^{-k} \alpha_N^{- kn} {\textstyle\prod_{i=1}^k} \phi\big( \varphi_N(x_i) \big) , \qquad \x\in (\R^n)^k . 
\]
Moreover, since $V$ is $C^2$ at $E_N$, by a Taylor expansion and using the conditions \eqref{cedge}--\eqref{cTaylor}, we obtain the asymptotics as $N\to+\infty$, 
\[
\zeta^{-1} \alpha_N^{-n} N \phi\big( \varphi_N(x) \big)  \to  e^{-\upsilon \cdot \psi(x)} 
\]
which holds uniformly for all $x$ in compact sets of $\R^n$.
This shows that $R_N^{(k)}(\x)  \to {\textstyle\prod_{i=1}^k} e^{-\upsilon \cdot \psi(x_i)} $ uniformly on compact subsets of for  $(\R^n)^k$, so that the claim follows from Lemma~\ref{lem:ppwcvg}.

\clearpage

\end{document}